\pgfplotsset{compat=1.12}% For TikZ
\title[Singular Set of Minimizing Harmonic Maps]{On the size of the singular set of minimizing harmonic maps}
\date{\today}
\author{Katarzyna Mazowiecka}
\address[Katarzyna Mazowiecka]{
Institute of Mathematics,%
University of Warsaw,
Banacha 2,
02-097 Warszawa, Poland
\newline
\&
Universit\'e catholique de Louvain, Institut de Recherche en Math\'ematique et Physique, Chemin du Cyclotron 2 bte L7.01.02, 1348 Louvain-la-Neuve, Belgium}
\email{katarzyna.mazowiecka@uclouvain.be}
\author{Micha\l{} Mi\'{s}kiewicz}
\address[Micha\l{} Mi\'{s}kiewicz]{
Institute of Mathematics,
Polish Academy of Sciences,
{\'S}niadeckich 8, 
00-656 Warszawa, Poland
\newline
\&
Institute of Mathematics, 
University of Warsaw,
Banacha 2,
02-097 Warszawa, Poland}
\email{m.miskiewicz@mimuw.edu.pl}
\author{Armin Schikorra}
\address[Armin Schikorra]{Department of Mathematics,
University of Pittsburgh,
301 Thackeray Hall,
Pittsburgh, PA 15260, USA}
\email{armin@pitt.edu}
\definecolor{chameleongreen}{RGB}{98,189,25} %%% for Michal's green
\definecolor{commentgreen}{RGB}{40,130,10} %%% for Michal's green
\definecolor{bluish}{rgb}{0,0,0.8}
\definecolor{redish}{rgb}{1,0,0}
\definecolor{indigo}{rgb}{0.29, 0.0, 0.51}
\newcommand{\sing}{\operatorname{sing}}
\newcommand{\id}{\operatorname{id}}
\def\dif{\varsigma}		% MM: I'm not sure we will use \varsigma for the diffeomorphism, since it's easy to mistake with zeta; that's why I decide to use a declaration 
\def\eps{\varepsilon}
\def\vp{\varphi}
\def\pl{\partial}
\def\B{{B}}
\def\d{{d}}
\def\N{{\mathbb N}}
\def\n{{\mathcal N}}
\def\H{{\mathcal H}}
\def\S{{\mathbb S}}
\newcommand{\cH}{\mathcal H}
\newcommand{\cN}{\mathcal N}
\newcommand{\cB}{\mathcal B}
\newcommand{\cA}{\mathcal A}
\newcommand{\cU}{\mathcal U}
\newcommand{\cW}{\mathcal W}
\newcommand{\cQ}{\mathcal Q}
\newcommand{\bad}{{\mathsf{Bad}}}
\newcommand{\good}{{\mathsf{Good}}}
\newtheorem{theorem}{Theorem}
\newtheorem{lemma}[theorem]{Lemma}
\newtheorem{corollary}[theorem]{Corollary}
\newtheorem{proposition}[theorem]{Proposition}
\newtheorem{remark}[theorem]{Remark}
\newtheorem{definition}[theorem]{Definition}
\newtheorem{example}[theorem]{Example}
\newtheorem{conjecture}[theorem]{Conjecture}
\newcommand{\loc}{\mathrm{loc}}
\def\dist{{\rm dist\,}}
\def\Lip{{\rm Lip}}
\def\supp{{\rm supp\,}}
\def\loc{{\rm loc}}
\newcommand{\dd}{\,\mathrm{d}}
\newcommand{\dx}{\dd x}
\newcommand{\dy}{\dd y}
\renewcommand{\dh}{\dd \mathcal{H}^{2}}
\newcommand{\dhn}{\dd \mathcal{H}^{n-1}}
\newcommand{\R}{\mathbb{R}}
\newcommand{\brac}[1]{\left (#1 \right )}
\newcommand{\abs}[1]{\left |#1 \right |}
\newcommand{\norm}[1]{\left\|{#1}\right\|}
\newcommand{\barint}{
\rule[.036in]{.12in}{.009in}\kern-.16in \displaystyle\int }
\newcommand{\barcal}{\mbox{$ \rule[.036in]{.11in}{.007in}\kern-.128in\int $}}
\def\mvint_#1{\mathchoice
          {\mathop{\vrule width 6pt height 3 pt depth -2.5pt
                  \kern -8pt \intop}\nolimits_{\kern -3pt #1}}%
%%%% P.S., 01/03/2001
% old definition had ...\nolimits_{#1}}
% \kern -3pt makes nicer distances between the integral sign
% and the domain of integration
%%%%
          {\mathop{\vrule width 5pt height 3 pt depth -2.6pt
                  \kern -6pt \intop}\nolimits_{#1}}%
          {\mathop{\vrule width 5pt height 3 pt depth -2.6pt
                  \kern -6pt \intop}\nolimits_{#1}}%
          {\mathop{\vrule width 5pt height 3 pt depth -2.6pt
                  \kern -6pt \intop}\nolimits_{#1}}}
\numberwithin{theorem}{section} \numberwithin{equation}{section}
\newcommand{\lap}{\Delta }
\newcommand{\aleq}{\precsim}
\newcommand{\ageq}{\succsim}
\newcommand{\aeq}{\approx}
\let\latexchi\chi
\renewcommand\chi{\@ifnextchar_\sub@chi\latexchi}
\newcommand{\sub@chi}[2]{% #1 is _, #2 is the subscript
	\@ifnextchar^{\subsup@chi{#2}}{\latexchi^{}_{#2}}%
}
\newcommand{\subsup@chi}[3]{% #1 is the subscript, #2 is ^, #3 is the superscript
	\latexchi_{#1}^{#3}%
}
\tikzset{>=stealth}
\begin{document}

\begin{abstract}
We consider minimizing harmonic maps $u$ from $\Omega \subset \R^n$ into a closed Riemannian manifold $\n$ and prove:
\begin{enumerate}
 \item an extension to $n \geq 4$ of Almgren and Lieb's linear law. That is, if the fundamental group of the target manifold $\n$ is finite, we have
 \[
 \H^{n-3}(\sing u) \le C \int_{\partial \Omega} |\nabla_T u|^{n-1} \dhn;
 \] 
 \item an extension of Hardt and Lin's stability theorem. Namely, assuming that the target manifold is $\n=\S^2$ we obtain that the singular set of $u$ is stable under small $W^{1,n-1}$-perturbations of the boundary data. 
\end{enumerate}
In dimension $n=3$ both results are shown to hold with weaker hypotheses, i.e., only assuming that the trace of our map lies in the fractional space $W^{s,p}$ with $s \in (\frac{1}{2},1]$ and $p \in [2,\infty)$ satisfying $sp \geq 2$. We also discuss sharpness.
\end{abstract}

\sloppy

\subjclass[2010]{58E20, 35J57, 35J50, 35B30}
\maketitle
{
 \tiny 
 \setcounter{tocdepth}{1}
\tableofcontents}
\sloppy

\section{Introduction}

A \emph{minimizing harmonic map} from an $n$-dimensional domain $\Omega \subseteq \R^n$ into $\n$ is a map $u \in W^{1,2}(\Omega, \n)$ that minimizes the Dirichlet energy
\[
E(u)\coloneqq\int_{\Omega} |\nabla u|^2 \dx
\]
among all maps in $W^{1,2}(\Omega, \n)$ with the same boundary data $\vp\colon  \partial \Omega \to \n$. Here,
the target manifold $\n$ is a smooth, closed (i.e., compact and without boundary) Riemannian manifold isometrically embedded in $\R^d$. The Sobolev space $W^{1,2}(\Omega, \n)$ is defined as
\[
 W^{1,2}(\Omega,\mathcal{N}) \coloneqq \left \{u \in W^{1,2}(\Omega,\R^d)\colon \quad u(x) \in \mathcal{N} \mbox{ almost everywhere} \right \}.
\]
In such a geometrical setup, one might suspect that minimizing harmonic maps are always smooth. However, this holds only in the case of geodesics ($n=1$) and in the conformal case ($n=2$), see Morrey's classical result \cite{Morrey}. 

In contrast, in dimensions $n \ge 3$ even continuity cannot be guaranteed.
Minimizers of the Dirichlet energy satisfy the Euler--Lagrange system of equations
\[
-\Delta u  =  A(u)(\nabla u,\nabla u) \quad \mbox{in $\Omega$},\\
\]
where $A$ is the second fundamental form of the isometric embedding $\n \subset \R^d$. In the special case when $\n = \S^{d-1}$, this system takes the form
\[
-\Delta u  =  |\nabla u|^2 u \quad \mbox{in $\Omega$.}\\
\]
For $n \geq 3$, \emph{critical points}, i.e., solutions to the Euler--Lagrange equations might be everywhere discontinuous, see Rivi\`{e}re's seminal \cite{R95}.
\emph{Minimizers} enjoy better regularity, but discontinuities may still appear.
The simplest example are obstructions of topological nature: by Hopf--Brouwer theorem we know that if the topological degree of a boundary map $\varphi\colon\partial B^3 \to\S^2$ is not zero, there is no continuous extension $u\colon B^3\to \S^2$.  
The continuity of minimizers may also fail without such a topological obstruction. Hardt and Lin in \cite{HL1986} constructed a boundary datum $\varphi\in C^\infty(\partial B^3,\S^2)$ with $\deg \varphi =0$ for which all minimizers\footnote{Minimizing harmonic maps into $\S^2$ may be non-unique, consider $u\colon \B^n \to \S^2$ and $-u\colon \B^n \to \S^2$ which have the same energy and may have the same boundary datum.} must have singularities. 

Consequently, in dimensions $n \geq 3$, the analysis of singularities of minimizing harmonic maps is an intriguing theory. The singular set $\sing u$ of a mapping $u\in W^{1,2}(\Omega,\n)$ is defined as the complement of its regular set 
\[
 \sing u = \Omega \setminus \{x \in \Omega \colon u \text{ is smooth on some neighborhood of } x \}.
\]
For harmonic maps the analysis of the singular set started with the fundamental work of Schoen and Uhlenbeck \cite{SU1}. They showed that one can estimate the Hausdorff dimension of the singular set of minimizers. Namely,
\[
\dim_{\mathcal{H}}(\sing u) \le n-3
\]
for any minimizing harmonic map from an $n$-dimensional domain $\Omega \subset \R^n$ into an arbitrary closed Riemannian manifold $\mathcal{N}$ \cite[Theorem II]{SU1}. They also prove in the special case $n=3$ that the $\mathcal H^0$ measure (the counting measure) of the singular set is locally finite. The latter result was recently generalized for any $n\ge 3$ by Naber and Vatorta, see \Cref{th:NabVal-general-meas-bound} below.

A basic example of a singular minimizing harmonic map is given by the ``hedgehog" map
\[
 u_0 \colon B^n \to \S^{n-1}, \quad u_0(x) = \frac{x}{|x|}.
% u_0(x) = \frac{x}{|x|} \colon B^n \rightarrow \S^{n-1}.
\]
The minimality of $u_0$ was proved for $n=3$ by Brezis, Coron, and Lieb \cite[Theorem 7.1]{BCL1986} and for all $n\ge 3$ by Lin \cite{Lin1987}, see also Coron and Gulliver \cite[Theorem 1.1]{CoronGulliver}. This example shows the optimality of the result of Schoen and Uhlenbeck. Indeed, let $\Psi(x',x'')=\frac{x'}{|x'|}$ for $x'\in\R^3$ and $x''\in \R^{n-3}$. Then $\Psi\in W^{1,2}(B^n,\S^2)$ is a minimizing harmonic map with $\sing \Psi=\brac{\{0\}\times\R^{n-3}} \cap B^n$.

Additionally, the map $u_0\colon B^3 \to \S^2$ is the unique minimizer for its boundary mapping $\id\colon \partial B^3 \to \S^2$ \cite[Theorem 7.1]{BCL1986}. In general there is no uniqueness of minimizing harmonic maps for a given boundary datum. For example, in \cite{HKL-thevariety}, Hardt, Kinderlehrer,  and Lin construct a boundary map $\varphi\colon \partial B^3 \to \S^2$ for which there exist countably many minimizing harmonic mappings  or, in \cite{HL1989} Hardt and Lin construct a boundary datum which admits at least two minimizers: one of which is smooth and the other one is singular.

More quantitative results were obtained in the late 80's of the last century --- for $n=3$ and $\n=\S^2$, Almgren and Lieb \cite{AlmgrenLieb1988} showed that one can estimate the number of singularities of minimizing harmonic maps in terms of their trace maps, which became to be known as \emph{Almgren and Lieb's linear law}:
\begin{equation}\label{eq:ALestimate}
\#\{\text{singularities of }u\} \leq C(\Omega) \int_{\partial \Omega} |\nabla_T u|^2\dh.
\end{equation}

Moreover, in \cite{HL1989} Hardt and Lin showed that for a \emph{unique}\footnote{By considering restrictions to a proper subset one may obtain uniqueness. In fact, the set of boundary data $\varphi\colon \S^2 \to \S^2$ admitting unique minimizers is dense in $W^{1,2}(\partial B^3,\S^2)$, see \cite[Theorem 4.1]{AlmgrenLieb1988}.} harmonic minimizer $v\in W^{1,2}(\Omega,\S^2)$ the number of singularities remains the same for all minimizers whose trace is close in the Lipschitz-norm to the trace of $v$. This is known as \emph{Hardt and Lin's stability theorem}.

The natural questions arise: what are the minimal regularity assumptions of the boundary map in order to obtain a linear estimate similar to \eqref{eq:ALestimate}? Is it possible to obtain a similar bound for other target manifolds? Is it possible to extend this result to higher dimensions of the domain?

The first named author and Strzelecki proved in \cite{MazowieckaStrzelecki} that singularities of minimizing harmonic maps from a 3-dimensional domain to $\S^2$ are not stable under $W^{1,p}$-perturbations of the boundary data, for $p<2$. Thus, combining this with the stability result of Hardt and Lin one can wonder whether the stability theorem of Hardt and Lin cannot be strengthened to $W^{1,2}$-perturbations of the boundary.

To introduce our main results, let us begin with definitions of the spaces involved. For $s\in (0,1)$ the fractional Sobolev space on the boundary of a smooth enough set $\Omega$ is defined as 
\[
 W^{s,p}(\partial \Omega) \coloneqq \{f\in L^p(\partial \Omega) \colon [f]_{W^{s,p}(\partial \Omega)}<\infty\},
\]
%equipped with the norm $\|f\|_{W^{s,p}(\partial \Omega)} = \|f\|_{L^p(\partial \Omega)} + [f]_{W^{s,p}(\partial \Omega)}$,
where the Gagliardo seminorm is given by 
\[
 [f]_{W^{s,p}(\partial \Omega)} \coloneqq \brac{\int_{\partial \Omega} \int_{\partial \Omega} \frac{|f(x) - f(y)|^p}{|x-y|^{n-1+sp}}\dd x \dd y}^\frac{1}{p}.
\]
We note here that for $s=1$ we will sometimes write 
\[
 [\varphi]_{W^{1,p}(\partial \Omega)} \coloneqq \brac{\int_{\partial \Omega}|\nabla \varphi|^p \dd \cH^{n-1}}^\frac1p. 
\]

We recall that by Gagliardo's trace theorem \cite{Gagliardo} the fractional Sobolev spaces are naturally related to boundary value problems: For any function $f\in W^{1,2}(\Omega)$ we have $\phi \in W^{\frac12,2}(\partial \Omega)$, where $\phi \coloneqq f\big\rvert_{\partial \Omega}$ (in the trace sense) with $\|\phi\|_{W^{\frac12,2}(\partial \Omega)}\le C\|f\|_{W^{1,2}(\Omega)}$. Conversely, given any $\gamma\in W^{\frac12,2}(\partial \Omega)$ there exists an extension $g\in W^{1,2}(\Omega)$,  $g\big\rvert_{\partial \Omega} = \gamma$ (in the trace sense) for which $\|g\|_{W^{1,2}(\Omega)}\le C \|\gamma\|_{W^{\frac12,2}(\partial \Omega)}$. 

However, we will be working with maps which have values in a manifold $\n$ (with finite fundamental group). Gagliardo's theorem \cite{Gagliardo} extends to the vectorial case, assuring us that for any map $u\in W^{1,2}(\Omega,\n)$ its trace belongs to the fractional space $W^{\frac12,2}(\partial \Omega,\n)$, but it gives us no information about the range of an extension of a $W^{\frac12,2}(\partial \Omega, \n)$ map. In fact, in our case there are boundary maps $\vp \in W^{\frac12,2}(\partial \Omega, \n)$, which admit no manifold-valued extension $u\in W^{1,2}(\Omega,\n)$ with $u\big\rvert_{\partial \Omega}=\vp $. See discussion in \cref{s:extension}. A reader not familiar with algebraic topology is encouraged to think of $\n$ as of $\S^2$. Let us remark that our  condition on the target manifold applies also to $\n=\mathbb{RP}^2$ as a target manifold, which is the most relevant case for the theory of liquid cristals\footnote{Harmonic maps are \emph{highly simplified} models of liquid crystals, see \cite{HardtKinderlehrerLin1986, ERICKSEN1976233}}.

Our main results are the following.
\begin{theorem}\label{th:stabilitynD}
Let $\Omega \subset \R^n$ be a bounded smooth domain. Assume that for some boundary map $\varphi \in W^{1,n-1}(\partial \Omega,\S^2)$ there is a unique minimizer $u \in W^{1,2}(\Omega,\S^2)$. 

Then, for each $\varepsilon > 0$, there is a $\delta > 0$ such that
\begin{equation}\label{eq:stabilityn}
\| \psi - \varphi \|_{W^{1,n-1}(\partial \Omega)} < \delta
\quad \Longrightarrow \quad 
d_W \left( \H^{n-3} \llcorner \sing v, \H^{n-3} \llcorner \sing u \right) < \varepsilon
\end{equation}
for any minimizer $v$ with boundary datum $\psi$. 

Here, $d_W$ is the $1$-Wasserstein distance, see \eqref{eq:wasserstein}, which in particular satisfies
\[
 \left | \H^{n-3} (\sing u) - \H^{n-3} (\sing v) \right | \le d_W \left( \H^{n-3} \llcorner \sing u, \H^{n-3} \llcorner \sing v \right) .
\]
\end{theorem}

\begin{theorem}\label{th:intro:al}
Let $\pi_1(\n)$ be finite and let $\Omega\subset \R^n$ be a bounded smooth domain. Assume that $u\in W^{1,2}(\Omega,\n)$ is a minimizing harmonic map with $u\rvert_{\partial \Omega} = \varphi$ (in the trace sense). Let us also assume that $sp=n-1$ and $s\in(\frac12,1]$. Then 
 \[
  \H^{n-3}(\sing u) \le C(n,\Omega,s) [\varphi]^p_{W^{s,p}(\partial \Omega)}.
 \]
\end{theorem}

In the case $n=3$ \Cref{th:intro:al} is optimal, in the sense it cannot be improved to the case when $sp<2$, see \Cref{la:sharpll1s}.

Also note that the theorem includes the limiting case $s = 1$, in which the inequality closely resembles Almgren and Lieb's linear law for $n=3$: 
\[
  \H^{n-3}(\sing u) \le C(n,\Omega) \int_{\pl \Omega} |\nabla \vp|^{n-1} \dd \cH^{n-1}.
\]

A very challenging question is whether the $sp=n-1$ condition in Theorem~\ref{th:intro:al} can be improved for $n>3$. Technically, this condition controls each singularity close to the boundary of $\Omega$ --- but only $(n-3)$-dimensional singularities appear in the estimate. So one might be tempted to believe that a bound with $sp=2$ is sufficient in \eqref{eq:mainestimate} for any dimension~$n$. On the other hand, one might also be able to analyze each stratum of the singular set via a different Sobolev norm along the boundary, see a discussion in \Cref{ss:optimalboundarynorm}. 

As for Theorem~\ref{th:stabilitynD} our argument relies crucially on the classification of tangent maps. For the case when the target manifold is $\S^2$ such a classification was obtained by Brezis, Coron, and Lieb in \cite[Theorem 1.2]{BCL1986}. 
It is also known for $\S^3$ in the target, see Nakajima's \cite{Nakajima} and also \cite{LinWang2006}. For $\n=\S^3$ it was proved by Schoen and Uhlenbeck that the estimate on the Hausdorff dimension of the singular set of any minimizing harmonic map $u\in W^{1,2}(\Omega,\S^3)$ may be improved to $\dim_{\mathcal{H}} \sing u \leq n-4$ \cite{SU84}. It is possible to extend Theorems \ref{th:stabilitynD} and \ref{th:intro:al} to consider $\mathcal{H}^{n-4} \llcorner \sing u$, following the same argument. Apart for the cases $\n=\S^2$ and $\n=\S^3$ we do not know whether \Cref{th:stabilitynD} can be extended to any other target manifold.

For results in the higher dimension $n>3$ the main new ingredient is the interior analysis of the singular set of minimizing harmonic maps by Naber and Valtorta \cite{NabVal17}. The following measure bound allows us to generalize Almgren and Lieb's linear law to higher dimensions. 

\begin{theorem}[{{\cite[Theorem~1.6]{NabVal17}}}]
\label{th:NabVal-general-meas-bound}
For $n \geq 3$, let $u \colon B_{2r}(y) \to \n$ be energy minimizing and 
\[
r^{2-n} \int_{B_{2r}(y)} |\nabla u|^2 \dx \le \Lambda.
\] 
Then there exists a constant $C=C(n,\n,\Lambda) > 0$ such that \[\H^{n-3}(\sing u \cap B_r(y)) \le C r^{n-3}.\]
\end{theorem}
Let us also mention here that results on estimates of the singular sets in the case of quasiconvex functionals were obtained earlier in \cite{M03,KM07}.

In order to prove the stability theorem (Theorem \ref{th:stabilitynD}), one needs to refine the above measure estimates. In addition to the methods developed in \cite{NabVal17} (discussed in Section \ref{sec:NabVal}), these refinements also involve the results of the second named author, concerned with the structure of the singular set:  
\begin{theorem}[{\cite[Cor.~1.5]{Mis18}}]
If $n\ge 4$ and $u \colon \B^n \to \S^2$ is a minimizing harmonic map, then the top-dimensional part $\sing_* u$ forms an open subset of $\sing u$ and it is a topological $(n-3)$-dimensional manifold of H{\"o}lder class $C^{0,\gamma}$ with every $0 < \gamma < 1$. 
\end{theorem}
The necessary ingredients are further discussed in Section \ref{sec:stab-flatness}. 

Along the way we survey several results for minimizing harmonic maps, in particular we put an emphasis to present in details proofs from \cite{AlmgrenLieb1988}. For an excellent survey on the topic of singularities of harmonic maps we refer the interested reader to \cite{Hardt1997}. 

\subsection{Outline of the article}

In \Cref{s:partialregularity} we recall the basic results on partial regularity of minimizing harmonic maps, in particular we recall the monotonicity formula, define tangent maps, and the stratification of the singular set.

In \Cref{s:boundary} we recall basic tools for boundary regularity, in particular we set the notation for ``straightening'' the boundary.

In \Cref{sec:NabVal} we outline those results of Naber and Valtorta's \cite{NabVal17} that are the most relevant in our case.

In \Cref{s:extension} we discuss the properties of minimizers with values in a manifold with finite fundamental group. In particular in \Cref{ss:extension} we recall the ``extension'' property, which combined with a lifting theorem is our basic tool for comparing the energies. As a consequence we obtain uniform local boundedness of minimizing harmonic maps in \Cref{ss:unifromboundedness} and Caccioppoli inequalities in \Cref{ss:caccioppoli}.

In \Cref{sec:strong-convergence} as a consequence of the previous section we present the compactness results for minimizing harmonic maps. 

In \Cref{sec:boundaryregularity} we present boundary regularity results. In particular we present the improved uniform boundary regularity results for singular boundary data. 

In \Cref{sec:stability} we give the proof of our first main result \Cref{th:stabilitynD}.

In \Cref{sec:AL}, we give the proof of our second main result \Cref{th:intro:al}, which is stated there as \Cref{th:almgren-lieb-high}.

In \Cref{s:finalremarks} we first give examples to prove the optimality of our results in $n=3$ (\Cref{ss:examples}). Next we discuss the possible improvement of our results for $n>3$ and state a conjecture in \Cref{ss:optimalboundarynorm}. Then, in \Cref{ss:othertargetmanifolds}, we briefly discuss the case of target manifolds with infinite fundamental group. 

We close the paper with \Cref{a:traces}, where we review the trace theorems used throughout the paper.

\subsection{Notation}
Throughout the paper we let $\n\subset \R^d$ be a smooth, closed Riemannian manifold, embedded into $\R^d$ and $\Omega\subset\R^n$ be a bounded domain with at least $C^1$-boundary. We write $B_\delta(\n) = \{x\in\R^d\colon \dist(x,\n)<\delta\}$ for the tubular neighborhood of $\n$ on which the nearest point point projection map $\pi_\n$ is well defined and as smooth as the manifold $\n$. Existence of such a neighborhood is a well-known fact (see, e.g., \cite[Appendix 2.12.3]{Simon1996}).

Similarly, for an affine plane $L$ we will write $B_\delta(L)$ to denote the tubular neighborhood of $L$. We denote by $B_r(y)$ the ball centered in $y$ with radius $r$. By $\R^n_+ = \R^n\cap\{x\in\R^n\colon x_n > 0\}$ we denote the upper half-space. $B^+_r(y)$ is given by $B^+_r(y) = B_r(y) \cap \R^n_+$. For any $r>0$ we write $T_r(y) = B_r(y) \cap \{x\in\R^n\colon x_n=0\}$ for the flat part and $S_r^+(y) = \partial B_r \cap \R^n_+$ for the curved part of the boundary of $B_r^+(y)$. Sometimes, we will omit the center and write only $B_r$, $T_r$, or $S_r^+$, when it will not cause any confusion. We will also write sometimes $B^k_r(y)$, to emphasize that the ball is $k$-dimensional. 

For simplicity we use lowercase Greek letters $\psi,\, \varphi,\, \phi$ for boundary maps and $u$, $v$, $w$ etc. for interior maps. The letters $r, R, \rho, t$ will be usually reserved for the radii. We denote the tangential gradient of $u$ (i.e., the gradient of its restriction $u |_{\partial \Omega}$) by $\nabla_T u = \nabla u - (\nabla u)\nu \otimes \nu$, where $\nu$ is the outward normal vector. The indices $s,\,p$ will be reserved for the order and integrability in the fractional Sobolev space. Capital Greek letters $\Phi,\,\Psi$ will be reserved for tangent maps, $\theta_u(y,r)$ for the rescaled energy of the map $u$ at point $y$, and $\Theta$ will be the energy density of $x'/|x'|$. For $x\in\R^n$ we will write $x=(x',x'')\subset\R^{n-k}\times\R^{k}$ unless $k=1$, then $x=(x',x_n)\in \R^{n-1}\times\R$. We write $\omega_{k}$ for the measure of the $k$-dimensional unit ball. In estimates we will often write $A \aleq B$, which means that there exists a constant $C$, not dependent on any crucial quantity, such that $A\le CB$. 

We note here that we will use the letter $\pi$ to denote several things, which might be misleading. Here we list our usage of this letter (of course we will also use the letter to denote the Archimedes' constant):
\begin{enumerate}
 \item $\pi$ without any index will be a Riemannian cover $\pi \in C^\infty(\widetilde{\n},\n)$;
 \item $\pi_\n$ will be the nearest point projection onto the manifold defined on a tubular neighborhood of a manifold $\n$:
 \[
  \pi_\n \colon B_\delta (\n) \to \n;
 \]
\item $\pi_i(\n)$, for $i=1,2,\ldots$ will denote the usual $i$-th homotopy group of the manifold $\n$. We call $\pi_1(\n)$ the fundamental group of $\n$. As customary, we write $\pi_0(\n) = 0$ for connected manifolds. 
\end{enumerate}
Throughout the paper the term \emph{minimizer} or \emph{energy minimizer} will refer to an $\n$-valued map minimizing the Dirichlet energy among $W^{1,2}(\Omega,\n)$ maps with same boundary data, unless otherwise stated.

\subsection{Acknowledgments.}
The authors would like to thank Pawe\l{} Strzelecki for suggesting extending the results of \cite{AlmgrenLieb1988} to higher dimensions. The authors would also like to thank Jean Van Schaftingen for suggesting to extend the results to manifolds with finite fundamental group.

During the long preparation of this manuscript we obtained the following financial support:
\begin{itemize}
\item National Science Centre Poland via grant no. 2015/17/N/ST1/02360 (KM);
\item National Science Centre Poland via grant no. 2016/21/B/ST1/03138 (MM);
\item National Science Centre Poland via grant no. 2020/36/C/ST1/00050 (MM); 
%\Michal{I could merge these two lines into one, if needed.}
\item German Research Foundation (DFG) through grant no.~SCHI-1257-3-1 (KM, AS);
\item Daimler and Benz foundation, grant no 32-11/16 (KM, AS); 
\item Simons foundation, grant no 579261 (AS);
\item Etiuda scholarship no. 2018/28/T/ST1/00117 (MM); 
\item Mandat d'Impulsion scientifique (MIS) F.452317 - FNRS (KM);
\item FSR Incoming postdoc (KM).
\end{itemize}
Part of the work was carried out while KM was visiting the University of Pittsburgh; she thanks the Department of Mathematics for their hospitality. 

\section{Partial regularity in the interior}\label{s:partialregularity}
In this section we recall the basic regularity results for minimizing harmonic maps, used throughout the paper. 
\subsection{The $\eps$-regularity theorem}

The regularity theory of harmonic maps is based on the $\eps$-regularity theorem from the seminal work of Schoen--Uhlenbeck, see \cite[Theorem I and Theorem 3.1]{SU1}. 

\begin{theorem}[$\eps$-regularity of minimizers]
\label{th:epsreg2}
There exists a constant $\eps>0$ depending on $n$ and $\cN$ such that the following holds. If $u\in W^{1,2}(\Omega,\n)$ is a minimizing harmonic map in an open domain $\Omega \subset \R^n$ and 
\[
R^{2-n} \int_{B_R(y)} |\nabla u|^2 \dx < \eps
\]
for some ball $B_R(y) \subset \Omega$, then $u$ is smooth in the smaller ball $B_{R/2}(y)$.
\end{theorem}

\begin{remark}
 In fact in \cite{SU1} the authors prove that the solutions are H\"{o}lder continuous on smaller balls. For smoothness of the solutions we refer the reader to, e.g., \cite{Moser-book,LinWangbook}. Moreover, if the target manifold is analytic, for example $\n=\S^{d-1}$, then it can be shown that any continuous harmonic map is analytic; we refer to \cite{BG80,T72}.
\end{remark}

Note that the rescaled energy $R^{2-n} \int_{B_R(y)} |\nabla u|^2$ appears naturally in this context. In fact, one can reduce this theorem to the case $\B_R(y) = \B_1$ as follows. With the assumptions as above, one easily checks that the rescaled map $\overline{u}(x) = u((x-y)/R)$ is a minimizing harmonic map in $\B_1$ satisfying $\int_{B_1} |\nabla u|^2 \le \eps$. Then smoothness of $\overline{u}$ in $\B_{1/2}$ implies that $u$ is regular in $\B_{R/2}(y)$. This rescaling argument will be used multiple times in our considerations. 

\subsection{Monotonicity formula}

As already mentioned, the rescaled energy 
\begin{equation}\label{eq:thetau}
 \theta_u(y,r)\coloneqq r^{2-n}\int_{\B_r(y)} |\nabla u|^2 \dx
\end{equation}
is a central object in the study of singularities. We now show that it is monotone in $r$. The first published version of a monotonicity formula for minimizing harmonic maps was in Schoen and Uhlenbeck's \cite[Proposition 2.4]{SU1}. 

\begin{theorem}[Interior Monotonicity formula]\label{th:monotonicityformula}
Let $\Omega \subset \R^n$ and let $u\in W^{1,2}(\Omega, \n)$ be a~minimizing harmonic map. Then for any $0<r<R<\dist(y,\partial \Omega)$ 
 \begin{equation}\label{eq:monotonicityformula}
  R^{2-n}\int_{B_R(y)}|\nabla u|^2\dx - r^{2-n}\int_{B_r(y)}|\nabla u|^2 \dx 
  \ge \int_{B_R(y)\setminus B_r(y)}|x-y|^{2-n}\left|\frac{\partial u}{\partial \nu}\right|^2 \dx,
  \end{equation}
  where $\frac{\partial u}{\partial \nu}$ is the directional derivative in the radial direction $\frac{x-y}{|x-y|}$.
\end{theorem}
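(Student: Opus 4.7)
The plan is to derive \eqref{eq:monotonicityformula} from the inner-variation identity associated with energy minimality. Translating, I may assume $y = 0$. For any smooth vector field $X$ compactly supported in $\Omega$, the maps $\phi_t(x) := x + t X(x)$ are diffeomorphisms of $\Omega$ for $|t|$ small, and $u_t := u \circ \phi_t$ has the same boundary trace as $u$. Minimality forces $E(u) \le E(u_t)$, hence $\tfrac{d}{dt}\bigr|_{t=0} E(u_t) = 0$. Changing variables $z = \phi_t(x)$ and expanding $\det D\phi_t$ and $D\phi_t$ to first order in $t$ turns this into the inner-variation identity
$$\int_\Omega \Bigl(|\nabla u|^2 \operatorname{div} X - 2\, \partial_i u \cdot \partial_j u \, \partial_i X_j\Bigr) \dx \;=\; 0.$$

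Next I specialize to the radial vector field $X(x) = \eta(|x|)\, x$ with $\eta \in C^1_c([0, R))$. Writing $r = |x|$ and using
$$\operatorname{div} X = 3\eta(r) + r\eta'(r), \qquad \partial_i X_j = \eta(r)\delta_{ij} + \eta'(r)\tfrac{x_i x_j}{r}, \qquad \sum_i x_i\, \partial_i u = r\, \tfrac{\partial u}{\partial \nu},$$
a short computation collapses the identity above to
$$\int_\Omega \eta(r)|\nabla u|^2 \dx \;=\; -\int_\Omega r\eta'(r) \Bigl(|\nabla u|^2 - 2\Bigl|\tfrac{\partial u}{\partial \nu}\Bigr|^2\Bigr) \dx.$$
Approximating $\eta$ by a smoothed indicator of $[0,\rho]$ (so that $-r\eta'(r)\, dr$ concentrates at $r = \rho$ with weight $\rho$) and applying the coarea formula yields the pointwise-in-$\rho$ equality
$$\int_{B_\rho}|\nabla u|^2 \dx \;=\; \rho \int_{\partial B_\rho}\Bigl(|\nabla u|^2 - 2\Bigl|\tfrac{\partial u}{\partial \nu}\Bigr|^2\Bigr) \dh$$
for a.e.\ $\rho \in (0, R)$.

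To integrate this up, I combine it with the elementary derivative $\tfrac{d}{d\rho}\bigl(\rho^{-1}\int_{B_\rho}|\nabla u|^2\bigr) = -\rho^{-2}\int_{B_\rho}|\nabla u|^2 + \rho^{-1}\int_{\partial B_\rho}|\nabla u|^2 \dh$, which together with the displayed identity gives
$$\tfrac{d}{d\rho}\Bigl(\rho^{-1}\int_{B_\rho}|\nabla u|^2 \dx\Bigr) \;=\; 2\rho^{-1}\int_{\partial B_\rho}\Bigl|\tfrac{\partial u}{\partial \nu}\Bigr|^2 \dh.$$
Integrating in $\rho$ over $[r, R]$ and invoking the coarea formula once more produces exactly \eqref{eq:monotonicityformula}.

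The one technical point that requires care is the passage from smooth $\eta$ to $\chi_{[0,\rho]}$: this relies on $\rho \mapsto \int_{B_\rho}|\nabla u|^2$ being absolutely continuous (which it is, as an indefinite integral) and on $|\nabla u|^2$ admitting an $L^1$-trace on $\partial B_\rho$ for a.e.\ $\rho$ (ensured by Fubini on spherical coordinates). The resulting formula for a.e.\ $\rho$ extends to all $0 < r < R < \dist(y, \partial\Omega)$ by continuity of both sides in the radii.
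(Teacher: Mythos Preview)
Your argument is correct and is the standard inner-variation derivation of the monotonicity formula (this is essentially Price's approach, carried out e.g.\ in Simon's lecture notes, both of which the paper cites). Note, however, that the paper does \emph{not} give its own proof of this theorem: it simply states the result and refers to \cite[Proposition~2.4]{SU1}, \cite{Price}, and \cite[Section~2.4]{Simon1996}. So there is nothing to compare against in the paper itself.

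One small remark worth making for context: the original Schoen--Uhlenbeck argument in \cite{SU1} proceeds differently, by comparing $u$ on $B_\rho$ with its $0$-homogeneous extension from $\partial B_\rho$, which uses only minimality (not stationarity) but yields only the \emph{inequality} $\tfrac{d}{d\rho}\bigl(\rho^{-1}\int_{B_\rho}|\nabla u|^2\bigr) \ge \rho^{-1}\int_{\partial B_\rho}|\partial_\nu u|^2$, with a factor $1$ rather than $2$. The sharp identity with the factor $2$, as stated in the theorem, genuinely requires the inner-variation (stationarity) identity you used; so your route is not just one option among several but is in fact the appropriate one for the statement as written.
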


It is now evident that $r^{2-n}\int_{B_r}|\nabla u|^2 \dx$ is constant in $r$ if and only if $u$ is a $0$-homogeneous (i.e., radially constant) map. Minimizing harmonic maps with this property are called \emph{tangent maps} and play a special role (see Section \ref{s:tangentmaps}). 

\begin{proof}
We follow the original proof of Schoen and Uhlenbeck. 

Without loss of generality we may assume that $y=0$. For almost every $\varrho \in [r,R]$, the trace $u|_{\B_\varrho}$ belongs to $W^{1,2}(\partial \B_\varrho,\n)$. For such $\varrho$, the map $v(x) = u(\varrho \cdot \frac{x}{|x|})$ lies in $W^{1,2}(\B_\varrho,\n)$, moreover $\int_{\B_\varrho} |\nabla v|^2 = \frac{\varrho}{n-2} \int_{\partial \B_\varrho} |\nabla_T u|^2$, where $\nabla_T u$ denotes the differential restricted to directions tangent to $\partial \B_\varrho$. 

Since $v = u$ on $\partial \B_\varrho$, $v$ is a valid competitor for $u$ on $\B_\varrho$. It follows from minimality of $u$ that 
\begin{align*}
\int_{\B_\varrho} |\nabla u|^2 & \le \frac{\varrho}{n-2} \int_{\partial \B_\varrho} |\nabla_T u|^2.
\end{align*}
Thus,
\begin{align*}  
\tfrac{\partial}{\partial \varrho} \theta_u(0,\varrho) 
& = \varrho^{2-n} \int_{\partial \B_\varrho} |\nabla u|^2 - (n-2) \varrho^{1-n} \int_{\B_\varrho} |\nabla u|^2 \\
& \ge \varrho^{2-n} \int_{\partial \B_\varrho} |\nabla u|^2 - \varrho^{2-n} \int_{\partial \B_\varrho} |\nabla_T u|^2 \\
& = \varrho^{2-n} \int_{\partial \B_\varrho} \left|\frac{\partial u}{\partial \nu}\right|^2, 
\end{align*}
using the decomposition $|\nabla u|^2 = |\nabla_T u|^2 + \left|\frac{\partial u}{\partial \nu}\right|^2$ in the last line. Since $\theta(0,\varrho)$ is an absolutely continuous function, the claim can be obtained by integrating the above inequality from $\varrho = r$ to $\varrho = R$. 
\end{proof}

As observed by Hardt and Lin in \cite[Lemma 4.1]{HL1987}, Theorem \ref{th:monotonicityformula} can be strenghtened by considering a squeeze deformation as in \cite[2.4, 2.5]{Almgren-big} --- one can prove an equality instead of a lower bound. 
\begin{theorem}\label{th:monotonicityformula-general}
Let $u,\ r,\ R$ be as in Theorem \ref{th:monotonicityformula}, then
 \begin{equation}\label{eq:monotonicityformulaequality}
  R^{2-n}\int_{B_R(y)}|\nabla u|^2\dx - r^{2-n}\int_{B_r(y)}|\nabla u|^2 \dx = 2\int_{B_R(y)\setminus B_r(y)}|x-y|^{2-n}\left|\frac{\partial u}{\partial \nu}\right|^2 \dx.
  \end{equation}
\end{theorem}

We remark here that this result was generalized for stationary harmonic maps (and Yang--Mills fields) by Price in \cite{Price}, see also \cite[Section 2.4]{Simon1996} for a nice presentation.

\subsection{Energy density}

It follows from the monotonicity formula (Theorem \ref{th:monotonicityformula}) that the limit 
\begin{equation}
\label{eq:energy-density}
 \theta_u(x,0) \coloneqq \lim_{r\searrow 0} \theta_u(x,r) = \lim_{r\searrow 0} r^{2-n}\int_{B_r(x)} |\nabla u|^2 \dx 
\end{equation}
exists. We shall call it the \emph{energy density} of $u$ at $y$. Evidently, $\theta_u(x,0) = 0$ at regular points. 

With this in hand, the $\eps$-regularity theorem (Theorem \ref{th:epsreg2}) can be restated as follows: 
\begin{align*}
%\label{eq:eps-regularity}
\text{there is } \varepsilon(n,\cN)>0 \text{ s.t. } \theta_u(x,2r) < \varepsilon 
& \Rightarrow u \text{ is smooth in } \B_r(x), \\
%\nonumber
\text{in particular, } \theta_u(x,0) < \varepsilon & \Rightarrow x \notin \sing u.
\end{align*}

A weak version of partial regularity can now be deduced directly: $\cH^{n-2}(\sing u) = 0$. Indeed, for any finite positive measure $\mu$ in $\R^n$ it is true that the set of points $x \in \R^n$ satisfying $\liminf_{r \to 0} \frac{\mu(\B_r(x))}{r^k} \ge \eps$ has zero $k$-dimensional Hausdorff measure. In our case, it is enough to set $\dd \mu = |\nabla u|^2 \dx$ and $k = n-2$. 

As already mentioned in the introduction, it is possible to upgrade this dimension bound to $\dim_{\cH} (\sing) u \le n-3$. For this one needs to study the so-called tangent maps, see \Cref{s:tangentmaps}, and apply Federer's dimension reduction as in \cite[Section 5]{SU1}, see also \cite[Theorem A.4]{S83}. 

\subsection{Convergence of singular points}
For each fixed $r > 0$, the function $\theta_u(y,r)$ is continuous in both $u \in W^{1,2}$ and $y \in \Omega$. It is useful to note that $\theta_u(y,0)$ is upper semicontinuous as it is the pointwise infimum of these functions, thus 
\[
u_k \to u \text{ strongly in } W^{1,2}(\Omega,\cN), \ 
y_k \to y \text{ in } \Omega 
\quad \Rightarrow \quad 
\theta_{u}(y,0) \ge \limsup_{k \to \infty} \theta_{u_k}(y_k,0),
\]
given that all maps $u$, $u_k$ are minimizing. We give a simple consequence of this fact. 
\begin{theorem}[{Singular points converge to singular points, {\cite[Thm 1.8 (i)]{AlmgrenLieb1988}}}]\label{th:ALs2s}
Assume that a sequence of energy minimizing maps $u_k \in W^{1,2}(\Omega, {\n})$ converges strongly in $W^{1,2}_{loc}$ to a~minimizer $u$, and a sequence of their singularities $y_k \in \sing u_k$ converges to $y \in \Omega$. Then $y$ is a singular point of $u$. 
\end{theorem}

\begin{proof}
By $\eps$-regularity, we have $\theta_{u_k}(y_k,0) \ge \eps$ for each $k$. Upper semicontinuity now implies $\theta_u(y,0) \ge \eps$, hence $y$ has to be a singular point.
\end{proof}

\begin{remark}
 We would like to note here that if we work with $\n=\S^2$ then a reverse statement of \Cref{th:ALs2s} is true, see Theorem \ref{th:ALs2sii}.
\end{remark}

\subsection{Tangent maps}\label{s:tangentmaps}

In this subsection we recall various facts concerning tangent maps which will be useful for future purposes. For more details we refer the interested reader to \cite[Chapter 3]{Simon1996}.

Let $u\in W^{1,2}(\Omega,\n)$ be a minimizing harmonic map, $y\in\Omega$ and $\lambda>0$. We define the rescaled maps $u_{y,\lambda}\in W^{1,2}\brac{\frac 1\lambda (\Omega-y),\n}$ by
\[
 u_{y,\lambda}(x) \coloneqq u(y+ \lambda x).
\]

We say that $\Phi\colon\R^n\rightarrow\n$ is a \emph{tangent map to $u$ at point $y$} if it is a $W^{1,2}_{loc}$ strong limit of $u_{y,\lambda}$ for some sequence $\lambda\searrow 0$. Another consequence of the monotonicity formula is the following lemma on existence of tangent maps.

\begin{lemma}[{tangent maps, {\cite[Lemma 2.5]{SU1}}}]\label{la:tangentmaps}
For any $y\in\Omega$ and any sequence $\lambda_i \searrow 0$ there is a subsequence (still denoted $\lambda_i$), for which $u_{y,\lambda_i}$ is strongly convergent in $W^{1,2}_{loc}$ to a~minimizing harmonic map $\Phi \in W^{1,2}_{loc}(\R^n,\n)$. Moreover, $\Phi$ is homogeneous of degree $0$ and its energy is consistent with the energy density of $u$ at $y$: 
\[
\int_{\B_1(0)} |\nabla \Phi|^2 \dd x = \lim_{r \to 0} r^{2-n} \int_{\B_r(y)} |\nabla u|^2 \dd x.
\]  
\end{lemma}

\begin{proof}
Fix a ball $B_r(y) \subset \Omega$. For any $R > 0$, the monotonicity formula \eqref{eq:monotonicityformula} yields the bound
\[
 \begin{split}
%  \|\nabla u_{y,\lambda_i}\|_{L^2(B_R)}^2 = 
  \int_{B_R(0)}|\nabla u_{y,\lambda_i}|^2 \dx
  = \lambda_i^{2-n} \int_{B_{\lambda_i R}(y)} |\nabla u|^2 \dx 
  \le (r/R)^{2-n} \int_{B_r(y)}|\nabla u|^2 \dx 
 \end{split}
\]
for all large enough $i$ (the condition $\lambda_i \le r/R$ is used above). Thus, the sequence $u_{\lambda_i,y}$ is bounded in $W^{1,2}(\B_R(0),\cN)$. By a diagonal argument, we can choose a weakly convergent subsequence, i.e.,  $u_{y,\lambda_{i}} \rightharpoonup \Phi$ weakly in $W^{1,2}_{loc}$ for some $\Phi \in W^{1,2}_{loc}(\R^n,\n)$.  

By the compactness result, Theorem \ref{th:bsc}\footnote{The Compactness Theorem is stated only for manifolds with finite fundamental group but as noted in the introduction to Section \ref{sec:strong-convergence}, due to Luckhaus Lemma~\cite{L88}, the result holds true for any closed $\n$.} (see also \cite[Section 2.9]{Simon1996}), we infer that the convergence is in fact strong and that the limiting map $\Phi$ is minimizing. 

\medskip

To show that $\Phi$ is $0$-homogeneous, we take the limit $i \to 0$ in the estimate above. For each $R > 0$ we have 
\begin{align*}
\int_{\B_R(0)} |\nabla \Phi|^2 \dx 
& = \lim_{i \to \infty} \int_{\B_R(0)} |\nabla u_{y,\lambda_i}|^2 \dx \\
& = R^{n-2} \cdot \lim_{i \to \infty} (\lambda_i R)^{2-n} \int_{B_{\lambda_i R}(y)} |\nabla u|^2 \dx. 
\end{align*}
Whatever $R$ is, the last limit is just the energy density $\theta_u(y,0)$, so the rescaled energy $\theta_\Phi(0,R)$ does not depend on $R$. By monotonicity formula, this implies $\frac{\partial \Phi}{\partial \nu} \equiv 0$, and hence $\Phi$ is $0$-homogeneous. 
\end{proof} 

In the case when $\n$ is an analytical manifold and $n=3$ (or more generally, when $\n$ is analytic and there exists a tangent map $\Phi$ for which $\sing \Phi = \{0\}$) Simon proved uniqueness of the tangent map \cite[Section 8]{Simon1983}. In general, the limiting map depends on the choice of the sequence $\lambda_i$ and its subsequence (see \cite{White1992}). 

\subsection{Stratification of the singular set}

Now we explain the relationship of the singular set with tangent maps. First, we observe that $y\in\Omega$ is a~regular value if there exists a~constant tangent map to $u$ at $y$.

Next, we observe that for any tangent map $\Phi$ the maximum of the energy density is attained at $0\in\R^n$:
\[
 \theta_\Phi(y,0)\le \theta_\Phi(0,0) \quad \text{for any } y \in \R^n.
\]
If we assume additionally that $\theta_\Phi(y,0) = \theta_\Phi(0,0)$ then we obtain
\[
 \Phi(x+\lambda y) = \Phi(x) \quad \text{for any } \lambda \in \R \text{ and } x\in \R^n,
% \Phi(x+\lambda y) = \Phi(x+y) \quad \text{for any } \lambda>0 \text{ and } x\in \R^n,
\]
which leads to the definition
\[
 S(\Phi)\coloneqq \{y\in\R^n \colon \theta_\Phi(y,0) = \theta_\Phi(0,0)\}.
\]
Observe that for non-constant tangent map $\Phi$ we have $S(\Phi)\subset \sing \Phi$.

We introduce the notion of $k$-symmetric maps. A map $f \colon \R^n \to \cN$ is called $k$-symmetric if $f(\lambda x) = f(x)$ for any $x \in \R^n$, $\lambda > 0$, and there exists a linear $k$-dimensional plane $L \subset \R^n$ such that $f(x+y) = f(x)$ for any $x \in \R^n$, $y \in L$. The space of such functions will be denoted by $\mathrm{sym}_{n,k}$. 

Next we observe
\[
 y\in \sing u \Longleftrightarrow\dim S(\Phi) \le n-1 \quad \text{ for every tangent map $\Phi$ of $u$ at $y$}.
\]
We define for all $j\in \{0,\ldots,n-1\}$ the stratification of the singular set
\[
\begin{split}
 S_j &\coloneqq \{y\in \sing u \colon \dim S(\Phi) \le j \text{ for all tangent maps $\Phi$ of $u$ at $y$} \}\\
 &= \{y\in \sing u \colon \text{ no tangent map of $u$ at $y$ belongs to $\mathrm{sym}_{n,j+1}$}\}.
 \end{split}
\]
Note that 
\[
 S_0 \subset S_1 \subset \ldots \subset S_{n-4}\subset S_{n-3} = S_{n-2} = S_{n-1} = \sing u,
\]
since the existence of nonconstant $(n-2)$-symmetric tangent maps would contradict the weak version of partial regularity $\cH^{n-2}(\sing u) = 0$ that we mentioned earlier. It can be shown that 
\begin{equation}
\label{eq:Sjdim}
 \dim_{\mathcal{H}} (S_j) \leq j,
\end{equation}
which implies in particular the partial regularity result $\dim_\H (\sing u) \le n-3$, see \cite[Theorem II]{SU1}.
% Note that
%\begin{equation}
%\label{eq:Sjdim}
% \dim_{\mathcal{H}} (S_j) \leq j
%\end{equation}
%and in particular from the regularity result $\dim_\H (\sing u) \le n-3$, see \cite[Theorem II]{SU1}. This gives us
%\[
% S_0 \subset S_1 \subset \ldots \subset S_{n-4}\subset S_{n-3} = S_{n-2} = S_{n-1} = \sing u.
%\]

We will be mainly interested in the top-dimensional part of the singular set, so for this purpose we define
\[
 \sing_* u = S_{n-3} \setminus S_{n-4}.
\]

\subsection{Classification of tangent maps into $\S^2$}

We recall the classification of tangent maps into $\S^2$ by Brezis--Coron--Lieb \cite{BCL1986}.
\begin{theorem}[{{\cite[Theorem~1.2]{BCL1986}}}]
\label{th:BCLclassification}
Every nonconstant locally minimizing harmonic map $\R^3 \to \S^2$ has the form $\frac{\tau x}{|\tau x|}$ for some linear isometry $\tau$ of $\R^3$. 
%$\mathcal R \brac{\frac{x}{|x|}}$ for a orthogonal rotation $\mathcal R$ of $\R^3$.
\end{theorem}

In higher dimensions, we will use the symbol $\Psi \colon \R^n \to \S^2$ to denote the map
\begin{equation}
\label{eq:def-psi}
\R^3 \times \R^{n-3} \ni (x',x'')
\xmapsto{\quad \Psi \quad}
\frac{x'}{|x'|} \in \S^2.
\end{equation}
Its energy density will be denoted by 
\begin{equation}
\label{eq:def-Theta}
\Theta \coloneqq \int_{B_1(0)} |\nabla \Psi|^2 \dx.
\end{equation}
% This could be computed, but the precise value of $\Theta$ has no importance for our considerations. 

%Why not, there are so many fun facts here, that one more doesn't change anything
We remark that $\Theta = 8\pi$ for $n=3$ and 
\[
\Theta 
= 8 \pi \int_{\B_1^{n-3}} \sqrt{1-|x''|^2} \dd x''
= 4 \pi |\omega_{n-2}| 
\qquad \text{for } n \ge 4,
%= 8 \pi \cdot |\S^{n-4}| \cdot \int_0^1 \sqrt{1-t^2} t^{n-4} \dd t 
%= 8 (n-3) \frac{\pi^{\frac{n}{2}} \Gamma(\frac{n-3}{2})}{\Gamma(\frac{n-1}{2}) \Gamma(\frac{n}{2})},
\]
but the precise value of $\Theta$ has no importance for our considerations. 

We note that the map $\Psi$ belongs to $\mathrm{sym}_{n,k}$ for all $k=0,1,\ldots,n-3$ but not to $\mathrm{sym}_{n,n-2}$. 

The classification of tangent maps in dimension $3$ (Theorem \ref{th:BCLclassification}) can be used to classify the $(n-3)$-symmetric tangent maps in general (see also \cite[Corollary 2.2]{HLB4}). 

\begin{corollary}\label{co:uniquetangent}
Suppose $u\in W^{1,2}(\Omega, \S^2)$ is a minimizing harmonic map and $y\in \sing_*u$, then up to isometries of $\R^n$ the only tangent map of $u$ at $y$ is $\Psi$. In particular the energy density of $u$ at a point from $\sing_* u$ equals $\Theta$.
\end{corollary}

\begin{proof}
By definition, $u$ has a nonconstant $(n-3)$-symmetric tangent map $\Phi$ at $y$. Denoting the $(n-3)$-dimensional plane $L = S(\Phi)$, we can represent $\Phi$ as 
\[
\Phi(x',x'') = \Phi_0(x') 
\quad \text{for } (x',x'') \in L^\perp \times L, 
\]
where $\Phi_0$ is another nonconstant $0$-homogeneous map. By \cite[Lemma 5.2]{SU1} we know that $\Phi_0$ is also a locally minimizing harmonic map. Theorem~\ref{th:BCLclassification} implies now that (up to an isometry) $\Phi_0 = \frac{x'}{|x'|}$. This shows that indeed $\Phi$ differs from $\Psi$ by a composition with an isometry. 
\end{proof}

\subsection{Additional properties of singularities of minimizers into $\S^2$}
\hfill

In this subsection we mention a few results that hold in the special case when the target manifold is a two dimensional sphere. Those results were used by Almgren and Lieb \cite{AlmgrenLieb1988} in their proof of the linear law. We will not use them but we present them here to familiarize the reader with the special case $n=3$ and $\n = \S^2$.

In the case when $\n = \S^2$ a reverse statement of Theorem \ref{th:ALs2s} is true.

\begin{theorem}[{{\cite[Thm 1.8 (2)]{AlmgrenLieb1988}}}]\label{th:ALs2sii}
Assume $u_i\in W^{1,2}(\Omega, \S^2)$ is a sequence of minimizing maps in $\Omega\subset\R^3$, which converges strongly in $W^{1,2}_{loc}$ to $u$. Then, if $y\in\Omega$ is a singular point of $u$, then for all sufficiently large $i$, $u_i$ has a singular point at $y_i$ and $y_i\rightarrow y$.
\end{theorem}

\begin{proof}
By classification of tangent maps, Theorem~\ref{th:BCLclassification}, if $y$ is a singular point of $u$ we know that on balls of small radius $B_r(y)$
\[
 u\sim \tau \brac{\frac{x-y}{|x-y|}},  
\]
for a linear isometry $\tau$ of $\R^3$. Since, the map $\frac{x}{|x|}\in W^{1,2}(B_1^3,\S^2)$ cannot be approximated by $C^\infty(\overline{B}_1^3,\S^2)$ maps, see \cite[Section 4]{SU2} or \cite{B91}, we infer that for $i \ge i_0(r)$, $u_i$ must have a~singular point $y_i\in B_r(y)$. 

Applying this reasoning for a sequence $r_i \searrow 0$, we obtain a sequence of singularities $y_i \in \sing u_i$ converging to $y$. 
\end{proof}

The following two results exploit the classification of singularities into $\S^2$ (Theorem \ref{th:BCLclassification}) even further --- here it will be important that at each singular point $y \in \sing u$ the energy density $\theta_u(y,0)$ \eqref{eq:energy-density} has the same value 
\[
\int_{B_1^3(0)} \left| \nabla \brac{\frac{x}{|x|}} \right|^2 \dx = 8 \pi.
\]
as noted in Corollary \ref{co:uniquetangent}. 

\begin{lemma}[{Liouville theorem, {\cite[Thm 2.2]{AlmgrenLieb1988}}}]
\label{lem:liouville}
Let $u \colon \R^3 \to \S^2$ be locally energy minimizing in all of $\R^3$. Then, up to a translation, $u$ is a tangent map, i.e., $u$ is either constant or has the form $u(x) =  \tau \left( \frac{x-y}{|x-y|} \right)$ for some $y \in \R^3$ and some linear isometry $\tau$ of $\R^3$. 
\end{lemma}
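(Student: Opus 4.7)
The plan is to combine the monotonicity formula, applied at both vanishing and diverging scales, with the Brezis--Coron--Lieb classification of tangent maps into $\S^2$. First I would observe that, applied on $B_{2R}$ for each $R > 0$, the bound of Theorem~\ref{th:ubi} forces
\[
r^{-1}\int_{B_r}|\nabla u|^2\dx \le 2C \quad\text{for every } r>0,
\]
where $C$ is the absolute constant from that theorem. Combined with the interior monotonicity formula (Theorem~\ref{th:monotonicityformula}), the limit
\[
\Theta_\infty(y) := \lim_{r\to\infty} r^{-1}\int_{B_r(y)}|\nabla u|^2\dx
\]
exists and is finite; a routine comparison $B_r(y) \subset B_{r+|y-z|}(z)$ shows that $\Theta_\infty(y)$ does not depend on the center, so I will simply write $\Theta_\infty$.

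Next I would blow down. The rescalings $u_\lambda(x) := u(\lambda x)$ are locally minimizing with uniform $W^{1,2}_{loc}$ bounds, so Theorem~\ref{th:isc} produces along some subsequence $\lambda_i\to\infty$ a strong limit $u_\infty \in W^{1,2}_{loc}(\R^3,\S^2)$ that is itself locally minimizing. Because
\[
R^{-1}\int_{B_R}|\nabla u_\infty|^2\dx = \lim_{i\to\infty}(\lambda_i R)^{-1}\int_{B_{\lambda_i R}}|\nabla u|^2\dx = \Theta_\infty
\]
for every $R > 0$, the monotone quantity of $u_\infty$ centered at the origin is constant in $R$, and the equality case of Theorem~\ref{th:monotonicityformula} forces $\partial u_\infty/\partial \nu = 0$; in other words, $u_\infty$ is $0$-homogeneous. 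Applying Theorem~\ref{th:BCLclassification} to $u_\infty$ (which is its own tangent map at $0$) then gives that $u_\infty$ is either constant or of the form $\pm\mathcal R(x/|x|)$, so $\Theta_\infty \in \{0, 8\pi\}$.

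If $\Theta_\infty = 0$, monotonicity forces $r^{-1}\int_{B_r(y)}|\nabla u|^2\dx = 0$ for every $y$ and $r$, so $u$ is constant. If $\Theta_\infty = 8\pi$, then $u_\infty$ has a singularity at the origin, and Theorem~\ref{th:ALs2s}(2) applied to the strong convergence $u_{\lambda_i}\to u_\infty$ produces singular points of $u_{\lambda_i}$ accumulating at $0$, hence (after rescaling back) a genuine singular point $y^* \in \sing u$ of $u$ itself. By Corollary~\ref{co:monoplusclassofsingpoints} the density $\Theta_u(y^*) = 8\pi = \Theta_\infty$, so the monotone quantity centered at $y^*$ is constantly $8\pi$ on all scales. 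The equality case of the monotonicity formula once more yields that $u$ is $0$-homogeneous about $y^*$, and a final application of Theorem~\ref{th:BCLclassification} delivers $u(x) = \pm\mathcal R\bigl(\tfrac{x-y^*}{|x-y^*|}\bigr)$.

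The step I expect to be the main obstacle is upgrading the ``singularity at infinity'' of $u_\infty$ to a genuine singular point of $u$ somewhere in $\R^3$. This passage depends crucially on Theorem~\ref{th:ALs2s}(2), whose proof in turn uses the nontrivial topology of $\S^2$; without it, one could not \emph{a priori} exclude the existence of a smooth $u : \R^3 \to \S^2$ with $\Theta_\infty = 8\pi$ and empty singular set.
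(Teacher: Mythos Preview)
Your proof is correct and follows essentially the same route as the paper's: bound the scaled energy via Theorem~\ref{th:ubi}, blow down to a $0$-homogeneous minimizer, invoke the Brezis--Coron--Lieb classification to pin $\Theta_\infty\in\{0,8\pi\}$, and use Theorem~\ref{th:ALs2s}(2) to pass from a singularity of the blow-down to one of $u$. The only organizational difference is that the paper begins by splitting into the cases ``$u$ already has a singular point'' versus ``$u$ is smooth'' (using Theorem~\ref{th:ALs2s} only to derive a contradiction in the smooth case with $\Theta'>0$), whereas you first compute $\Theta_\infty$ and then, in the $8\pi$ case, use Theorem~\ref{th:ALs2s}(2) constructively to \emph{produce} the singular point $y^*$; after that both arguments coincide.
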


\begin{proof}
Let us first consider the singular case, without loss of generality we may assume $0 \in \sing u$. By Theorem \ref{th:uniformboundedness} (uniform boundedness), the monotone quantity $\theta_u(0,r)$ is bounded, so it has a finite limit
\[
\Theta' \coloneqq \lim_{r \to \infty} \theta_u(0,r). 
\]
Moreover, the sequence of rescaled maps $u_r(x) = u(rx)$ is bounded in $W_\loc^{1,2}(\R^3,\S^2)$ as $r \to \infty$, and after choosing a subsequence, we can obtain in the limit an energy minimizing limit map $\Phi \colon \R^3 \to \S^2$ (\emph{a tangent map at infinity}). Note that for each $s > 0$ we have
\[
s^{-1}\int_{B_s} |\nabla \Phi|^2 \dx
= \lim_{r \to \infty} s^{-1} \int_{B_s} |\nabla u_r|^2 \dx 
= \lim_{r \to \infty} (rs)^{-1} \int_{B_{rs}} |\nabla u|^2 \dx 
= \Theta',
%\theta_\varphi(0,s) = \lim_{r \to \infty} \theta_{u_r}(0,s) = \lim_{r \to \infty} \theta_{u}(0,rs) = \Theta',
\]
in particular $\Phi$ has energy density $\theta_\Phi(0,0) = \Theta'$. As there is only one possible energy density, we infer that $\Theta'$ is equal to $8\pi = \theta_u(0,0)$. Monotonicity formula (Theorem \ref{th:monotonicityformula}) now implies that $u$ is $0$-homogeneous (i.e., a tangent map), since the monotone quantity has the same limit for $r \to 0$ and $r \to \infty$. 

\medskip

If $u$ is smooth, most of the above discussion still applies. If $\Theta' = 0$, then in particular $\int_{B_r} |\nabla u|^2\dx$ tends to zero as $r \to \infty$, so $u$ is constant. If $\Theta' > 0$, then the obtained map $\Phi$ has a singularity at the origin, and by Theorem \ref{th:ALs2sii} the rescaled maps $u_r$ are also singular for large $r$. Since $u$ is smooth, this yields a contradiction. 
\end{proof}

\begin{theorem}[{Uniform distance between singular points, {\cite[Thm 2.1]{AlmgrenLieb1988}}}]\label{th:udsg}
There is a~constant $c > 0$ such that the following holds. Let $\Omega\subset\R^3$ be a bounded domain, and $u \in W^{1,2}(\Omega, \S^2)$ a minimizing harmonic map with a singularity at $y \in \Omega$. Then there is no other singularity within distance $cD$ of $y$, where $D \coloneqq \dist(y,\partial\Omega)$ is its distance to the boundary. 
\end{theorem}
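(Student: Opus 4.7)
The plan is a compactness/contradiction argument, exploiting the classification of tangent maps (and hence entire minimizers) via the Liouville theorem.

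Suppose the claim fails. Then for each $i \in \N$ we can find a bounded domain $\Omega_i$, a minimizer $u_i \in W^{1,2}(\Omega_i, \S^2)$, and two distinct singular points $y_i, z_i \in \sing u_i$ with $r_i := |z_i - y_i|$ satisfying $r_i/D_i \to 0$, where $D_i := \dist(y_i, \partial \Omega_i)$. Rescale by setting
\[
\tilde u_i(x) := u_i(y_i + r_i x), \qquad x \in B_{D_i/r_i},
\]
so that $\tilde u_i$ is a minimizing harmonic map on $B_{D_i/r_i}$, which has a singularity at $0$ and a singularity at $w_i := (z_i - y_i)/r_i \in \partial B_1$. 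By assumption $D_i/r_i \to \infty$, so the domains exhaust $\R^3$.

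The next step is to extract a limit. By Theorem~\ref{th:ubi} (interior uniform boundedness), applied on $B_{D_i/r_i}$, the energies $\int_{B_R} |\nabla \tilde u_i|^2 \dx$ are uniformly bounded for any fixed $R$ once $i$ is large. Hence a subsequence converges weakly in $W^{1,2}_\loc(\R^3, \S^2)$; by Theorem~\ref{th:isc} (interior strong convergence of minimizers) the convergence is actually strong in $W^{1,2}_\loc$ and the limit $v \in W^{1,2}_\loc(\R^3, \S^2)$ is locally energy minimizing. Passing to a further subsequence we may assume $w_i \to w$ with $|w| = 1$.

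Now invoke Theorem~\ref{th:ALs2s}(1): since $0$ and $w_i$ are singular points of $\tilde u_i$ and converge to $0$ and $w$ respectively, both $0$ and $w$ are singular points of $v$. But by the Liouville theorem (Lemma~\ref{lem:liouville}), any locally minimizing $v \colon \R^3 \to \S^2$ is either constant or of the form $\pm \mathcal R\bigl(\frac{x - y_0}{|x - y_0|}\bigr)$ for some $y_0 \in \R^3$ and rotation $\mathcal R$; in particular $\sing v$ contains at most one point. This contradicts the presence of two distinct singularities $0 \neq w$ in $\sing v$, and the theorem follows.

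The main conceptual step is the rescaling, which converts the putative failure of a uniform distance bound into a limiting entire minimizer with two singularities; the rest is bookkeeping, relying on three earlier results (Theorems~\ref{th:ubi}, \ref{th:isc}, \ref{th:ALs2s}) combined with the rigidity given by Lemma~\ref{lem:liouville}. The only mild subtlety is verifying that the rescaled problem fits the hypotheses of the interior compactness theorem on every fixed ball $B_R$ once $i$ is large enough, which is immediate from $D_i/r_i \to \infty$.
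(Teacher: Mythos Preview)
Your proof is correct and follows essentially the same route as the paper: a contradiction argument via rescaling, uniform boundedness (Theorem~\ref{th:ubi}), strong compactness of minimizers (Theorem~\ref{th:isc}), persistence of singularities under strong convergence (Theorem~\ref{th:ALs2s}(1)), and the Liouville theorem (Lemma~\ref{lem:liouville}). If anything, your version is slightly more explicit than the paper's in citing Theorem~\ref{th:ALs2s}(1) for the passage of the two singularities to the limit.
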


\begin{proof}
Assume the claim is false. Then we can find a sequence $u_i \in W^{1,2}(\Omega_i,\S^2)$ with two distinct singularities $x_i,y_i \in B_{D_i/i}(y_i)$, where $D_i = \dist(y_i,\partial\Omega_i)$. For each $i$ consider the rescaled map
\[
\tilde{u}_i(z) \coloneqq u_i \left ( y_i + |x_i-y_i| z \right ),
\]
which is a minimizing harmonic map in a large ball $B_i(0)$. This map has two singularities at $0$ and $\frac{x_i-y_i}{|x_i-y_i|}$. Using Theorem \ref{th:uniformboundedness} (uniform boundedness), Theorem \ref{th:bsc} (compactness of minimizers), compactness of $\S^2$, and a diagonal argument, we obtain an energy minimizing limit map $u \colon \R^3 \to \S^2$, which is singular at least in two points $0$ and $x$ with $|x|=1$. However, the possibility of two singularities is excluded by the Liouville theorem (Lemma \ref{lem:liouville}). 
\end{proof}

\begin{corollary}[Uniform bound for singularities in the interior]\label{co:sgint}
Let $\Omega \subset \R^3$ be a bounded domain, and $u \in W^{1,2}(\Omega,\S^2)$ a minimizing harmonic map. Then for any $\sigma > 0$, the number of singularities with distance to the boundary at least $\sigma$ is bounded by a constant depending only on $\Omega$ and $\sigma$: 
\[
\# \{ x \in \sing u\colon  \dist(x,\partial \Omega) \ge \sigma \} \le C(\Omega,\sigma). 
\]

\end{corollary}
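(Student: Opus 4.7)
The plan is to deduce this corollary directly from Theorem~\ref{th:udsg} (uniform distance between singular points) via a simple packing argument. Write $\Omega_\sigma := \{x \in \Omega : \dist(x,\partial\Omega) \ge \sigma\}$ and let $y_1, \ldots, y_N$ be distinct singularities of $u$ lying in $\Omega_\sigma$. For each $i$ the distance $D_i = \dist(y_i,\partial\Omega)$ is at least $\sigma$, so by Theorem~\ref{th:udsg} there is no other singularity of $u$ within distance $cD_i \ge c\sigma$ of $y_i$, where $c>0$ is the absolute constant from that theorem (which we may assume satisfies $c<2$).

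Consequently the open balls $B_{c\sigma/2}(y_i)$, $i=1,\ldots,N$, are pairwise disjoint: if some $y_j$, $j\neq i$, lay in $B_{c\sigma/2}(y_i)$, then $|y_i - y_j| < c\sigma/2 < c\sigma \le cD_i$, contradicting the uniform distance estimate. Moreover, since $\dist(y_i,\partial\Omega) \ge \sigma > c\sigma/2$, each such ball is contained in $\Omega$.

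Comparing volumes therefore yields
\[
N \cdot \tfrac{4}{3}\pi (c\sigma/2)^3 = \sum_{i=1}^N |B_{c\sigma/2}(y_i)| \le |\Omega|,
\]
so that
\[
N \le \frac{6\,|\Omega|}{\pi c^3\,\sigma^3} =: C(\Omega,\sigma),
\]
which is the desired bound. No step here is delicate: the only non-trivial ingredient is Theorem~\ref{th:udsg} itself, whose proof rests on the Liouville-type Lemma~\ref{lem:liouville} and the classification of tangent maps. Once that is in hand, the rest of the argument is pure Euclidean packing.
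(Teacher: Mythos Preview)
Your proof is correct and is exactly the argument the paper has in mind: the corollary is stated without proof, immediately after Theorem~\ref{th:udsg}, as a direct consequence of the uniform-distance estimate via the packing argument you wrote down.
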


\section{Monotonicity formula and tangent maps at the boundary}\label{s:boundary}

%To study boundary regularity of harmonic maps, one needs counterparts of the notions from the last section. 

\subsection{Notation for "straightening" the boundary}\label{sub:str8}

As we will be working at the boundary we need to distinguish one variable. For a point $x\in \R^n$ we write $x=(x',x_n)\in \R^{n-1}\times\R$. Let us assume that $\Omega\subset\R^n$ is a bounded domain with a $C^1$-boundary. This means that $\pl \Omega$ can be described by a $C^1$-graph on each ball $\B_{R_0}(a)$ around $a \in \pl \Omega$ (with uniform $R_0 > 0$). 

For each $a \in \pl \Omega$, we may choose a rigid motion (i.e., a rotation followed by a translation) $h \colon \R^n \to \R^n$ that sends $a$ to $0$, $T_a \pl \Omega$ to $\R^{n-1} \times 0$ and the outer normal vector to $(0,\ldots,0,-1)$. Thus, there is a $C^1$ function $\alpha_a \colon \R^{n-1} \to \R$ such that 
\begin{gather*}
h(\Omega \cap \B_{R_0}(a)) = \{ x \in \B_{R_0}(0) \colon x_n > \alpha(x') \}, \\
\alpha_a(0) = 0, \ \nabla \alpha_a(0) = 0.
\end{gather*}
Since $\alpha_a \in C^1$ and $\pl \Omega$ is compact, we may assume $|\nabla \alpha_a(x')| \le \omega(|x'|)$ with a uniform modulus of continuity $\omega$ ($\omega(t) \to 0$ as $t \to 0$). 

The boundary of $\Omega$ can be "straightened out" by the diffeomorphism 
\[
\varsigma_a(x',x_n) = (x',x_n-\alpha_a(x')), 
\quad 
\varsigma_a^{-1}(x',x_n) = (x',x_n+\alpha_a(x')),
\]
i.e., $\varsigma_a$ is a $C^1$-diffeomorphism for which $\varsigma_a(\pl \Omega \cap \B_{R_0}(a)) \subseteq \R^{n-1} \times 0$. Moreover, the estimates for $\alpha_a$ give us 
\[
|\nabla \varsigma_a(x) - \id| \le |\nabla \alpha_a(x')| \le \omega(|x'|) 
\quad \text{for } x \in \Omega \cap \B_{R_0}.
\]

In what follows, we will consider rescaled maps at the boundary. For this we need to define the functions that describe the rescaled boundary. For each $0 < r \le 1$ let 
\[
\alpha_{r,a}(x') = r^{-1} \alpha_{a}(r x'). 
\]
One can easily see that
\[
 \norm{\alpha_{r,a}}_{\Lip(B_R)} = \norm{\alpha_{a}}_{\Lip(B_{r R})} \xrightarrow{r \to 0} 0 
 \quad \text{for any } R > 0.
\]
This map describes the boundary of the set $\Omega_{r^{-1}}(a)$ defined as
\[
 \Omega_{r^{-1}}(a) \coloneqq \{x\in B_{R_0/r}(0)\colon x_n > \alpha_{r,a}(x')\} 
 = \{ x \in \R^n : rx \in h(\Omega) \}.
\]
We observe that as $r \rightarrow 0$ we have $\alpha_{r,a}\rightarrow 0\eqqcolon\alpha_{0,a}$ (locally uniformly), which motivates the definition 
\[
 \Omega_\infty(a)\coloneqq\{x\in \R^n\colon x_n>\alpha_{0,a}(x')\}= \R^n_+.
\]

%%%%%%%% MM: commented previous version
%In what follows, we will consider rescaled maps at the boundary. For this we need to define the functions that describe the rescaled boundary. For each $0<r\le R$ let 
%\[
%\alpha_{r,a}(x')=\alpha_{R,a}(rx'). 
%\]
%One can easily see that
%\[
% \norm{\alpha_{r,a}}_{\Lip}={r}\norm{\alpha_{R,a}}_{\Lip}.
%\]
%This map describes the boundary of the set $\Omega_{R/r}(a)$ defined as
%\[
%\begin{split}
% \Omega_{R/r}(a)&\coloneqq \{x\in B_{R/r}(0)\colon x_n>\alpha_{r,a}(x')\}= \{x\in B_{R/r}(0)\colon x_n>\alpha_{R,a}(rx'/R)\}.
% %h^{-1}\brac{\frac{y-a}{r}}\colon \Omega \cap B_r(a)
%\end{split}
% \]
%We observe that as $r\rightarrow0$ we have $\alpha_{r,a}\rightarrow 0\eqqcolon\alpha_{0,a}$, which motivates the definition 
%\[
% \Omega_\infty(a)\coloneqq\{x\in \R^n\colon x_n>\alpha_{0,a}(x')\}= \R^n_+(0).
%\]

Assume that $u\in W^{1,2}(\Omega, \n)$ is a minimizing harmonic and let us denote its trace by $\varphi$. We define the map $u_{r,a}$ on the domain $\Omega_{R_0/r}(a)$ by
\[
u_{r,a}(\cdot)\coloneqq u(rh(\cdot)+a), \quad \text{ in } \Omega_{R/r}(a),
\]
where on the portion of the boundary, $\partial\Omega_{R_0/r}(a)\cap B_1(0) = \brac{\{x_n = \alpha_{r,a}(x')\}\cap B_1(0)}$,  its trace is given by 
\[
 \varphi_{r,a}(\cdot)\coloneqq\varphi(rh(\cdot)+a), \quad \text{ on } \partial\Omega_{R_0/r}(a)\cap B_1(0), 
\]
with Lipschitz constant
\[
 \norm{\varphi_{r,a}}_{\Lip} = {r}\norm{\varphi}_\Lip.
\]
We note that $u_{r,a}$ is also a minimizing harmonic map and
\[
 \int_{B_1(0) \cap \Omega_{r^{-1}}(a)} |\nabla u_{r,a}|^2 \dx = r^{2-n}\int_{B_r(a)\cap\Omega}|\nabla u|^2\dd y.
\]

\subsection{Boundary monotonicity formula}

We will employ a boundary monotonicity formula of Schoen and Uhlenbeck \cite[Lemma 1.3]{SU2}. 

In the sequel, we will only use it for $\Omega = \B_1^+$ and constant boundary data (cf., Theorem \ref{th:boundaryregularity-constant}). In this particular case, it is enough to repeat the simple argument given for the interior monotonicity formula (Theorem \ref{th:monotonicityformula}) to obtain 
\[
R^{2-n}\int_{B_R^+}|\nabla u|^2\dx - r^{2-n}\int_{B_r^+}|\nabla u|^2 \dx 
\ge \int_{B_R^+ \setminus B_r^+} |x-y|^{2-n}\left|\frac{\partial u}{\partial \nu}\right|^2 \dx 
\quad \text{for } 0 < r < R < 1.
\]
For the reader's convenience, we include the proof in full generality, with Lipschitz boundary data. In this case, the almost-monotone quantity is slightly different.

For another proof of boundary regularity for \emph{minimizing} harmonic maps we refer the interested reader to \cite[Theorem 2]{GBmono}.

\begin{theorem}[Boundary Monotonicity formula]\label{th:bdmonotonicityformula}
Let $\Omega\subset \R^n$ be a bounded domain with $C^1$-boundary, $u\in W^{1,2}(\Omega, \n)$ be a minimizing harmonic map with $u = \varphi$ on $\partial \Omega$ and  $\varphi \in \Lip(\partial\Omega,\n)$. Then, there exists a radius $R_0 = R_0(\|\varphi\|_\Lip,\Omega)$ such that for any $a\in \partial \Omega$ and $0< r < R < R_0$ 
\begin{equation}\label{eq:boundarmonotonicityformula}
\begin{split}
\bigg[ (1+C\varrho\|\vp\|_{\Lip})^{n-2} \varrho^{2-n}& \int_{B_\varrho(a)\cap \Omega} |\nabla u|^2 \dx \bigg] \Bigg|_{\varrho=r}^{\varrho=R}\\
&\quad \ge 
\int_{\Omega \cap B_R(a)\setminus B_r(a)}|x|^{2-n}\left|\frac{\partial u}{\partial \nu}\right|^2 \dx 
- C\|\vp\|_{\Lip}(R-r).
\end{split}
\end{equation}
where $C=C(n,\Omega,\n)$.
\end{theorem}

\begin{proof}
\textsc{Step 1 (flat boundary).} First we assume that the boundary is flat, i.e., $B_R(a)\cap \Omega = B_R^+(0) = B_R^+$.

We will use a similar comparison map as in proof of Theorem \ref{th:monotonicityformula}, but we need to change that construction so that the maps agree on the flat part of the boundary. In order to do so we extend the map $\varphi$ defined on $T_\varrho = \partial B_\varrho^+ \cap \{x_n =0\}$ to the whole half-ball $B_\varrho^+$ --- we let $\varphi(x) = \varphi(x_1, x_2, \ldots, x_{n-1},0)$ and abuse the notation slightly by using $\vp$ to denote the extension as well.

We define
\[
 \tilde{v}_\varrho (x)\coloneqq (u-\varphi)\brac{\varrho\frac{x}{|x|}} + \varphi(x), \quad \text{for } x\in B_\varrho^+(0)
\]
for any $\varrho< R_0$. One easily sees that
\[
 \begin{split}
  \tilde{v}_\varrho = \varphi &\quad \text{on } T_\varrho\\
  \tilde{v}_\varrho = u &\quad  \text{on } S_\varrho^+,
 \end{split}
\]
where $S_\varrho^+ =\partial B_\varrho(0)\cap \R^n_+$ is the curved part of the boundary $\partial B_\varrho^+(0)$.

Although $\tilde{v}_\varrho$ does not lie in the manifold $\n$ we can  estimate for $x \in B_\varrho^+(0)$
\begin{equation}\label{eq:flatdistanceestimate}
 \dist(\tilde{v}_\varrho,\n) \le \left| \tilde{v}_\varrho(x) - u\brac{\varrho\frac{x}{|x|}}\right| = \left|\varphi(x) - \varphi\brac{\varrho\frac{x}{|x|}}\right| \le \left|x - \varrho \frac{x}{|x|}\right| \|\varphi\|_{\Lip} \le \varrho \|\varphi\|_{\Lip}.
\end{equation}
Thus, for sufficiently small $R_0$ the mapping $\tilde{v}_\varrho$ has values in $B_\delta(\n)$ --- a neighborhood of $\n$ on which the nearest point projection $\pi_\n\colon B_\delta(\n) \rightarrow \cN $ is well defined.

Now we can define the comparison map as $v_\varrho\coloneqq \pi_\n \circ \tilde{v}_\varrho$. By minimality

\begin{equation}\label{eq:bdrymonocomparisonstep}
 \int_{B_\varrho^+(0)} |\nabla u|^2 \dx \le \int_{B_\varrho^+(0)} |\nabla v_\varrho|^2 \dx,
\end{equation}
so we carry on with estimating the right-hand side. First, we estimate the energy of $\tilde{v}_\varrho$ Using Cauchy--Schwartz inequality with $\varepsilon=\varrho\|\vp\|_{Lip}$ we estimate
\begin{equation}\label{eq:bdrymonocs-tilde}
\begin{split}
 \int_{B_\varrho^+(0)} |\nabla \tilde{v}_\varrho|^2\dx &\le 
 (1+\eps)\int_{B_\varrho^+(0)} \left|\nabla\brac{u\brac{\varrho\frac{x}{|x|}}}\right|^2 \dx\\
 &\quad + \brac{1+\frac{1}{\eps}} \int_{B_\varrho^+(0)} \left|\nabla \brac{\varphi(x) - \vp\brac{\varrho\frac{x}{|x|}}}\right|^2 \dx \\
 &= (1+\varrho \|\vp\|_{Lip})\int_{B_\varrho^+(0)} \left|\nabla\brac{u\brac{\varrho\frac{x}{|x|}}}\right|^2 \dx + C\varrho^{n-1}\|\vp\|_{Lip}.
 \end{split}
\end{equation}
To estimate the energy of $v_\varrho$, we note that $\nabla v_\varrho(x) = \nabla \pi_\cN(v(x)) \cdot \nabla \tilde{v}_\varrho(x)$. For $p \in \cN$, $\nabla \pi_\cN(p)$ is an orthogonal projection, hence $|\nabla \pi_\cN|^2 = 1$ (we consider the operator norm here). Since this is a smooth function of $p$, we have $|\nabla \pi_\cN(p)|^2 \le 1 + C \dist(p,\cN)$ for $p$ close to $\cN$. In particular, 
\[
|\nabla \pi_\cN(\tilde{v}_\varrho(x))|^2 
\le 1 + C \dist(\tilde{v}_\varrho(x),\cN) 
\le 1 + C \varrho \|\varphi\|_{\Lip}.
\]
As $|\nabla v_\varrho(x)| \le |\nabla \pi_\cN(v(x))| \cdot |\nabla \tilde{v}_\varrho(x)|$, the energy of $v_\varrho$ is at most $1 + C \varrho \|\varphi\|_{\Lip}$ times the energy of $\tilde{v}_\varrho$. However, we only consider small values of $\varrho$, so by enlarging the constants in \eqref{eq:bdrymonocs-tilde} we conclude 
\begin{equation}\label{eq:bdrymonocs}
 \int_{B_\varrho^+(0)} |\nabla v_\varrho|^2\dx 
 \le (1+\varrho \|\vp\|_{Lip})\int_{B_\varrho^+(0)} \left|\nabla\brac{u\brac{\varrho\frac{x}{|x|}}}\right|^2 \dx + C\varrho^{n-1}\|\vp\|_{Lip}.
\end{equation}

%\imichal{In old files (final1.pdf, for example) we managed without introducing $\Pi$ and decomposing it. Was it wrong?}
%\ikasia{I don't know how to obtain the inequality \eqref{eq:bdrymonocs} without that }
%\imichal{We simply prove $|\nabla \pi_\cN(p)|^2 \le 1 + C \dist(p,\cN)$ and we have \eqref{eq:bdrymonocs} as written in final1.pdf. I will change it back with an additional comment.}
% Done! 

Observing that
\[
\frac{d}{d\varrho} \int_{B_\varrho^+(0)} f \dx = \int_{S_\varrho^+} f \dhn,
\]
we can now compute similarly as in the proof of Theorem \ref{th:monotonicityformula}
\begin{align*}
  \int_{B_\varrho^+(0)} \left| \nabla \brac{u\brac{\varrho \frac{x}{|x|}}}\right|^2 \dx 
  & = \frac{\varrho}{n-2} \int_{S^+_\varrho} |\nabla_T u|^2 \dhn \\
  & = \frac{\varrho}{n-2}\brac{ \frac{d}{d\varrho} E_\varrho^+(u) - \int_{S^+_\varrho}\left|\frac{\partial u}{\partial \nu}\right|^2 \dhn},
\end{align*}
where 
\begin{equation}\label{eq:bdryder}
 E_\varrho^+(u) \coloneqq \int_{B_\varrho^+(0)} |\nabla u|^2 \dx.
\end{equation}
Combining \eqref{eq:bdrymonocomparisonstep}, \eqref{eq:bdrymonocs}, and \eqref{eq:bdryder} we obtain
\[
 E_\varrho^+(u) \le \frac{(1+C\varrho\|\vp\|_{\Lip})\varrho}{n-2}\brac{ \frac{d}{d\varrho} E_\varrho^+(u) - \int_{S^+_\varrho}\left|\frac{\partial u}{\partial \nu}\right|^2 \dhn} + C\varrho^{n-1}\|\vp\|_{\Lip}.
\]
Now we can estimate the derivative 

\[\begin{split}
&\frac{d}{d \varrho} \left( (1 + C \varrho \|\vp\|_\Lip)^{n-2} \varrho^{2-n} E_\varrho^+(u) \right) \\
&= (n-2) (1 + C \varrho \|\vp\|_\Lip)^{n-3} \varrho^{1-n} 
\left( \frac{(1 + C \varrho \|\vp\|_\Lip)\varrho}{n-2} \frac{d}{d\varrho} E_\varrho^+(u) - E_\varrho^+(u) \right) \\
&\ge (n-2) (1 + C \varrho \|\vp\|_\Lip)^{n-3} \varrho^{1-n} 
\left( \frac{(1 + C \varrho \|\vp\|_\Lip)\varrho}{n-2} \int_{S^+_\varrho}\left|\frac{\partial u}{\partial \nu}\right|^2 \dhn - C\varrho^{n-1}\|\vp\|_{\Lip} \right) \\
&\ge \varrho^{2-n} \int_{S^+_\varrho}\left|\frac{\partial u}{\partial \nu}\right|^2 \dhn - C (1 + C \varrho \|\vp\|_\Lip)^{n-3} \| \vp \|_\Lip
\end{split}
\]
and integrate it from $r$ to $R$: 
\[
(1+C\varrho\|\vp\|_{\Lip})^{n-2} \varrho^{2-n} E_\varrho^+(u) \Big|_r^R
%(1+CR\|\vp\|_{\Lip})^{n-2} r^{2-n} E_R^+(u) 
%- (1+Cr\|\vp\|_{\Lip})^{n-2} r^{2-n} r^{2-n} E_r^+(u) 
\ge 
\int_{B_R^+(0)\setminus B_r^+(0)}|x|^{2-n}\left|\frac{\partial u}{\partial \nu}\right|^2 \dx 
- C\|\vp\|_{\Lip}(R-r).
\]
This finishes the proof in the case of the flat boundary.

\textsc{Step 2 (general boundary).} The general case, when the boundary of $\Omega$ is not necessary flat, differs from the flat case in a few details (see for example \cite[Proof of Lemma 5.6]{HL1987}), which we sketch here. 

Choose a point $a \in \pl \Omega$ at the boundary. As described already, up to a rigid motion, we may assume that $a = 0$ and $\pl \Omega \cap \B_{R_0}$ is described by a $C^1$ function $\alpha \colon \R^{n-1} \to \R$: 
\[
\Omega \cap \B_{R_0} = \{ x \in \B_R \colon x_n > \alpha(x') \},
\]
with $\alpha(0) = 0$, $\nabla \alpha(0) = 0$. Then $\varsigma(x',x_n) = (x',x_n-\alpha(x'))$ is a $C^1$-diffeomorphism for which $\varsigma(\pl \Omega \cap \B_{R_0}) \subseteq \R^{n-1} \times 0$. 

Again, we extend the map $\vp$ to the whole domain by letting it be constant on vertical lines. However, since the rays from $0$ to $\Omega \cap \pl B_{\varrho}$ possibly cross the boundary, the definition of $\tilde{v}_\varrho$ needs to be altered. We solve this problem by considering the curves $t \mapsto \varsigma^{-1}(t \cdot \varsigma(x))$ instead. Hence, we take: 
\[
\tilde{v}_\varrho \coloneqq (u-\varphi)\brac{\varsigma^{-1}\brac{\varrho(x)\frac{\varsigma(x)}{|\varsigma(x)|}}}+\varphi(x).
\]
Note that we also replaced $\varrho$ by $\varrho(x)$, as the image $\varsigma(\Omega \cap \pl B_{\varrho})$ is no longer a~part of a~sphere. One can define $\varrho(x)$ as the length of the ray from $0$ to $\varsigma(\Omega \cap \pl B_{\varrho})$ passing through $x$. 

By previous considerations, $\varsigma$ is $C^1$-close to identity, in consequence also $\varrho(x)$ is $C^1$-close to~$\varrho$. With some care, one can check that this altered version of $\tilde{v}_\varrho$ also satisfies the required estimates.
\end{proof}

\subsection{Tangent maps at the boundary}
Similarly as in the interior case, for any point at the boundary $y\in \partial\Omega$ we can consider boundary tangent maps at $y$, which will arise as limits of rescaled mappings. Here we will consider only the case when $\Omega = B_1^+(0)$  has a~flat boundary. 

\begin{lemma}[Boundary tangent maps, {\cite[lemma 1.4]{SU2}}]\label{la:boundarytangentmaps}
 Let $u\in W^{1,2}(B_1^+(0), \n)$ be a~minimizing harmonic map with $u = \varphi$ on $T_1$ and let $\varphi$ be continuous at $a\in T_1$. Then there exists a sequence $\{\lambda_i\}$, $\lambda_i\searrow 0$ and a map $\Phi\in W^{1,2}(B_1^+(0),\n)$ such that 
 \[
  u_{\lambda_i,a}(\cdot)\coloneqq u(\lambda_i\cdot + a) \rightarrow \Phi(\cdot) \quad \text{in } W^{1,2}(B_1^+(0). \n)
 \]
Moreover, $\Phi$ is a minimizing harmonic map, homogeneous of degree 0, and $\Phi\big\rvert_{T_1} = u(a) = \varphi(a)$.

We call $\Phi$ a boundary tangent map to $u$ at $a$.
\end{lemma}

\begin{proof}
First we note that
\[
\int_{B_1^+(0)}|\nabla u_{\lambda_i,a}|^2\dx= \int_{B_1^+(0)}|\nabla (u(\lambda_ix+a))|^2 \dx = \lambda_i^{2-n}\int_{B_{\lambda_i}^+(a)} |\nabla u|^2 \dd y. 
\]
By the boundary monotonicity formula, Theorem~\ref{th:bdmonotonicityformula}, we know that $\sup_{\lambda_i>0}\lambda_i^{2-n}\int_{B_{\lambda_i}^+(a)} |\nabla u|^2 \dd y <\infty$, thus $\sup_{\lambda_i>0}\|u_{\lambda_i,a}\|_{W^{1,2}(B_1^+(0))}<\infty$. 
%Now by the trace theorem, see for example \cite[\textsection 20]{BN78}, we have $\|\vp_i\|_{}$ 
The proof of strong convergence follows from Theorem \ref{th:bsc}~\eqref{it:stongconvergenceboundary}, the proof of homogeneity follows from the proof of Lemma \ref{la:tangentmaps} with the monotonicity formula replaced by the boundary monotonicity inequality \eqref{eq:boundarmonotonicityformula}. Finally, $\Phi\big\rvert_{T_1} = \varphi(a)$ follows from the continuity of $\varphi$.
\end{proof}

The following result, due to \cite{SU2}, states that there exist no nonconstant boundary tangent maps. This is the main reason why at the boundary we have full regularity for certain boundary data, see Section \ref{sec:boundaryregularity}. We refer the reader for the proof of this fact to \cite[Theorem 5.7]{HL1987}.

\begin{lemma}[Nonexistence of nonconstant boundary tangent maps]\label{la:bdtangentmaps}
Assume that $u\in W^{1,2}(B^+_1(0),\n)$ is a minimizing harmonic map, homogeneous of degree 0 and constant at the flat part of the boundary, i.e., $u\big\rvert_{T_1} = const$. Then $u$ is constant.
\end{lemma}

\section{Refined estimates by Naber and Valtorta}
\label{sec:NabVal}

Here we discuss the results of Naber and Valtorta \cite{NabVal17} needed in the sequel. A simplified presentation of these is available in their later article \cite{NabVal18}.
%\cite[Theorem 3.1]{NabVal-arxiv} - previous arXiv version

The main ingredient is \Cref{th:NabVal-general-meas-bound}.  
%(see \cite[Thm~1.6]{NabVal17} or \cite[Thm.~20]{NabVal18}). 
% 
% \begin{theorem}[{{\cite[Thm~1.6]{NabVal17}}}]
% %\label{th:NabVal-general-meas-bound}
% Let $u \colon B_{2r} \to \n$ be energy minimizing and $r^{2-n} \int_{B_{2r}} |\nabla u|^2 \le \Lambda$. Then there exists a constant $C(n,\n,\Lambda) > 0$ such that $\H^{n-3}(\sing u \cap B_r) \le C r^{n-3}$.
% \end{theorem}
% 
In the special case of manifolds $\n$ with finite fundamental group, uniform boundedness of minimizers, Theorem~\ref{th:uniformboundedness}, implies that the energy assumption in \Cref{th:NabVal-general-meas-bound} is redundant. 

\begin{corollary}
\label{co:NabVal-meas-bound}
If $u \colon B_{2r}(y) \to \n$ is energy minimizing and $\pi_1(\n)$ is finite, then $\H^{n-3}(\sing u \cap B_r(y)) \le C r^{n-3}$ with some constant $C(n,\cN) > 0$.

In particular, whenever $\Omega' \subset \subset \Omega$ and $u$ is a minimizing harmonic map on $\Omega$, then
\[
 \mathcal{H}^{n-3}(\sing u \cap \Omega') < \infty.
\]
\end{corollary}

In order to prove the stability theorem, Theorem~\ref{th:stabilitynD}, one needs more subtle measure estimates. Note that for the tangent map $\Psi$, the singular set is an $(n-3)$-plane and so $\H^{n-3}(\sing \Psi \cap B_r) = \omega_{n-3} r^{n-3}$. If $u$ is close to $\Psi$, one could expect its singular set to have similar measure, see Lemma~\ref{lem:local-stability}. To this end, we will need two more results, which are essential ingredients of \cite{NabVal17}. 

To state them, we first recall the definition of Jones' height excess $\beta$-numbers. Choosing a~Borel measure $\mu$ in $\R^n$, a dimension $0 < k < n$ and an exponent $p \ge 1$, we can define for each ball $B_r(x)$ 
\[
\beta_{\mu,k,p} \coloneqq \inf_{L} \left( r^{-k-p} \int_{B_r(x)} \dist(y,L)^p \dd \mu (y) \right)^{1/p},
\]
where the infimum is taken over all $k$-dimensional affine planes $L \subset \R^n$. This measures how far the support of $\mu$ is from a $k$-dimensional plane (on the ball $B_r(x)$). However, we shall not work directly with this definition, but rather rely on the two theorems below, since they encompass all the geometric information we need.

The first theorem is a general geometric result that gives sharp measure estimates. 

\begin{theorem}[{Rectifiable Reifenberg {\cite[Theorem~3.3]{NabVal17}}}]
\label{th:NabVal-reifenberg}
For every $\eps > 0$ there is a~$\delta = \delta(n,\eps) > 0$ such that the following holds. Let $S \subset \R^n$ be an $\H^{k}$-measurable subset and assume that for each ball $B_r(x) \subset B_2$ 
\[
\int_{B_r(x)} \int_0^r \beta_{\mu,k,2}(y,s)^2 \frac{\dd s}{s} \dd \mu(y) 
\le \delta r^{k}, 
\]
where $\mu$ denotes the measure $\H^{k}\llcorner S$. Then $\mu(B_1) \le (1+\eps) \omega_k$. 
\end{theorem}

As a side remark, let us note that in our application the set $S$ will satisfy the so-called Reifenberg condition and so one could work with the $W^{1,p}$-Reifenberg theorem \cite[Theorem~3.2]{NabVal17} instead. 

\begin{theorem}[{$L^2$-best approximation {\cite[Theorem~7.1]{NabVal17}}}]
\label{th:NabVal-best-approx}
For every $\eps > 0$ there are $\delta(n,\eps) > 0$ and $C(n,\eps) > 0$ such that the following holds. If $u \colon B_{10}(0) \to \S^2$ is energy minimizing, 
\begin{align*}
\dist_{L^2(B_{10}(0))} (u, \ \mathrm{sym}_{n,0}  ) & \le \delta, \\
\dist_{L^2(B_{10}(0))} (u, \ \mathrm{sym}_{n,k+1}) & \ge \eps, 
\end{align*}
then for any finite measure $\mu$ on $B_1(0)$ we have 
\[
\beta_{\mu,k,2} (0,1)^2
\le C \int_{B_1(0)} \left( \theta_u(y,8) - \theta_u(y,1) \right) \dd \mu(y).
\]
\end{theorem}
Again, the formulation in \cite{NabVal17} involves an energy bound. However, Theorem~\ref{th:uniformboundedness} shows a~uniform bound on $\int_{B_9(0)} |\nabla u|^2 \dx$ and thus we obtain the stronger formulation above. 

Since we shall only consider $k = n-3$, $p = 2$ and $\mu = \H^{n-3} \llcorner \sing u$ from now on, we abbreviate $\beta_{\mu,n-3,2}$ by $\beta$; this should not cause any confusion. 

\section{Extension property and its consequences}\label{s:extension}
In this section, we collect the results concerning local uniform boundedness of minimizers into manifolds with finite fundamental group. One of its many consequences is that every sequence of minimizing harmonic maps has a subsequence that converges locally weakly (and in fact strongly, see Theorem~\ref{th:bsc}). 

We recall that $\n$ is a smooth closed connected Riemannian manifold. We let $\widetilde \n$ be its universal covering, and $\pi\in C^\infty(\widetilde \n,\n)$ be a Riemannian covering. We recall that $\widetilde \n$ is compact if and only if the fundamental group of $\n$ is finite, we also recall that $\pi_1(\widetilde\n) =0$, whereas the higher order homotopy groups of $\n$ and $\widetilde \n$ are the same. Of course if $\n$ is simply connected then $\widetilde \n = \n$ and $\pi = \id$. For further properties of the universal covering we refer the interested reader, e.g., to \cite{Hatcher}. From now on, we assume that $\widetilde \cN$ is isometrically embedded into some Euclidean space, which is possible by Nash's embedding theorem. 

We will use the following lifting theorem of Bethuel and Chiron.

\begin{theorem}[{\cite[Theorem 1]{BethuelChiron}}]\label{th:BethuelChiron}
If $\Omega$ is simply connected (e.g., a ball) and $u\in W^{1,2}(\Omega,\n)$, then there exists $\tilde u \in W^{1,2}(\Omega, \widetilde \n)$ such that $u= \pi \circ \tilde u$. Moreover, this $\tilde u$ is unique up to the action of an element of $\pi_1(\n)$ and satisfies a.e. $|\nabla \tilde u| = |\nabla u|$ .
\end{theorem}

We note also that in \cite[Remark 6.2]{AlmgrenLieb1988} it was noted that the results of the paper \cite{AlmgrenLieb1988} can be translated, using a lifting argument, into the case when the target manifold is $\n = \mathbb{RP}^2$ (i.e., to the case of a manifold which is not simply connected\footnote{Simply connected manifolds are manifolds that satisfy $\pi_0(\n) = \pi_1(\n)=0$.}, but has a finite fundamental group).

\subsection{Extension lemma}\label{ss:extension}

An underlying tool used in \cite{AlmgrenLieb1988} is the following $W^{1,2}$-extension property of $W^{1/2,2}$ maps into simply connected manifolds $\n$. This is a result of  Hardt and Lin \cite[Theorem 6.2]{HL1987}, which holds for even more general class of $W^{1,p}$-maps into $\lfloor p -1\rfloor$-connected manifolds\footnote{$k$-connected manifolds are manifolds that satisfy $\pi_0(\n) = \pi_1(\n) = \ldots = \pi_{k}( \n) = 0$.}. However, it was first published (and acknowledged to \cite{HL1987}) in the paper by Hardt--Kinderlehrer--Lin \cite[p.556]{HardtKinderlehrerLin1986} (see also \cite[Lemma A.1]{HardtKinderlehrerLinstable}), where it was stated for $\n = \S^2$. This result was also extended by Gastel \cite[Proposition 4.3]{Gastel} for $W^{k,p}$ maps and manifolds with sufficiently simple topology (see also the earlier extension to the case $\n = \S^{d-1}$ and $k,\,p=2$ by Hong and Wang \cite[Lemma 2.1]{Hong-Wang}). 

\begin{theorem}[Extension Property: unconstrained energy dominates constrained energy]\label{th:extensionthm}
Let $\Omega \subset \R^n$ be a bounded domain and $\n \subset \R^d$ be a simply connected submanifold. Assume that we have a map $v \in W^{1,2}(\Omega,\R^d)$ with $v(x)\in\n$ for a.e. $x\in\partial \Omega$. Then there exists a~map $u \in W^{1,2}(\Omega,\n)$, 
\[
 u \Big\rvert_{\partial \Omega} =  v\Big |_{\partial \Omega}
\]
with the estimate 
\[
 \|\nabla u\|_{L^2(\Omega)}\leq C\,\|\nabla v\|_{L^2(\Omega)}
\]
for a uniform constant $C$. 
\end{theorem}

\begin{remark}\label{rem:noextension}
We note that, unlike in the unconstrained case, it is not true for a general target manifold $\n$ that every boundary map $\varphi\in W^{1/2,2}(\partial \Omega,\n)$ has an extension in $W^{1,2}(\Omega,\n)$. A counterexample was provided by Hardt and Lin \cite[Section 6.3]{HL1989}. They prove that the map $\psi\in W^{1/2,2}(\S^2,\S^1)$ given by $\psi(x_1,x_2,x_3) = \frac{(x_1,x_2)}{|(x_1,x_2)|}$ cannot be extended to a map in $W^{1,2}(B^3_1(0),\S^1)$. 

More generally, every $\vp\in W^{\frac12,2}(\partial \Omega, \n)$ can be extended to a map $u\in W^{1,2}(\Omega,\n)$ with $u\big\rvert_{\partial \Omega} = \vp$ (in the trace sense) if and only if $\pi_1(\n) = 0$ (\cite[Theorem 4]{Bethuel-Demengel}).

For general obstructions for the existence of an extension and discussion of traces of manifold valued Sobolev maps we refer the reader to \cite{Bethuel-traces, mironescu2020trace}.
\end{remark}

We recall the key ingredient in the proof of Theorem \ref{th:extensionthm}. 

\begin{lemma}[{\cite[Lemma 6.1]{HL1987}}]\label{la:retraction}
Assume that $\n \subset \R^d$ is a closed simply connected submanifold, contained in a large cube $[-R,R]^{\d}$. Then, there exists a finite $(\d-3)$-dimensional Lipschitz complex $Y \subset [-2R,2R]^\d$ and a locally Lipschitz retraction $P \colon [-2R,2R]^\d \setminus Y \to \n$ such that
\begin{enumerate}
 \item for some small $\varrho > 0$, the restriction $P \big \rvert_{B_\varrho(\n)}$ is the nearest point projection to $\n$;
 \item $|\nabla P(x)| \le C \, \dist(x,Y)^{-1}$ for $x \notin Y$;
 \item $\int_{[-2R,2R]^\d} |\nabla P(x)|^2 \dd x < \infty$.
\end{enumerate}
\end{lemma}

\begin{remark}
Such a retraction is easy to construct if the target manifold is the standard sphere $\S^{d-1} \subset \R^d$ with $\d\ge 3$. Indeed, one can check that the radial projection $P \colon \R^d \setminus \{0\} \to \S^{\d-1}$, $P(x)=\frac{x}{|x|}$ gives the retraction from Lemma \ref{la:retraction}.
\end{remark}

%\imichal{Note the following possibility. The cube $Q$ is not contained in $\bigcup \cU$ (and so $Q \in \cW^\d$), but one of its faces $Q'$ is (and so $Q' \notin \cW^{\d-1}$). Then the previous formula for $P$ defined $P|_{Q'}$ in two inconsistent ways (once as $\pi_\n$ and once by composing projections until hitting $\cW^2$). For this reason, I think the construction needs to be a little bit more involved -- in the described scenario it should stop projecting after reaching $Q' \notin \cW^{\d-1}$ but just compose with $\pi_\n$. In general, one needs to consider the tree of all subcubes of $Q \in \cW^{\d}$ that are not contained in $\bigcup \cU$ (different branches have possibly different lengths). Then we compose with $P^j$'s while we can and finally take $\pi_\n$ at the leaves. I wrote a version of this below.}
%
%\imichal{The Lipschitz estimate is clear for any projection $P_Q$, but one has to exclude the possibility that the bad constants get multiplied $n-2$ times. This seems very delicate and deserves an explanation; I wrote a detailed inductive argument below.}

\begin{proof}[Proof of Lemma \ref{la:retraction}]
Denote the $\varrho$-neighborhood of $\cN$ by 
\[
B_{\varrho}(\n) = \{ x \in \R^d \colon \dist(x,\cN) < \varrho \}; 
\]
let us fix some small $\varrho > 0$ for which the nearest point projection $\pi_\n \colon B_{2\varrho}(\n) \to \n$ is well-defined and smooth. We also fix the decomposition $\cQ$ of $[-2R,2R]^d$ into $(4N)^\d$ cubes of side length $R/N$, where $N$ is chosen large enough to ensure that the family of cubes 
%\[
%\cQ \coloneqq \left\{ \eps a + \left[-\frac{\eps}{2},\frac{\eps}{2}\right] :  a \in \Z^\d \right\},
%\]
\[
\cU \coloneqq \left \{ Q \in \cQ \colon Q \subset B_{2\varrho}(\n) \right \} 
\]
covers $B_{\varrho}(\n)$. For $j = 0,1,\ldots,\d$, we let $\cQ^j$ be the family of $j$-dimensional faces of cubes in $\cQ$ (in particular, $\cQ^\d = \cQ$) and distinguish the $j$-cubes not covered by $\cU$: 
\[
 \cW^j \coloneqq \left\{ Q \in \cQ^j \colon Q \nsubseteq \bigcup \cU \right\}.
\]
Note that all cubes in $\cQ$ are partitioned into $\cU$ and $\cW^\d$, hence $\bigcup \cU \cup \bigcup \cW^\d = [-2R,2R]^\d$. 

\medskip

We now proceed with an inductive contruction of maps 
\[
 \Upsilon^j \colon \mathcal W^j \cup \bigcup \mathcal U \to \n 
 \quad \text{for } j=0,\ldots,\d 
\]
that extend the nearest point projection $\pi_\cN \colon \bigcup \cU \to \cN$. 

A well-known construction (see \cite[Lemma 4.7]{Hatcher}) shows that continuous maps into simply connected manifolds can be continuously extended to $2$-dimensional CW complexes. We use it here to extend $\pi_\cN \colon \bigcup \cU \to \cN$ to a Lipschitz map
\[
 \Upsilon^2 \colon \cW^2 \cup \bigcup \mathcal U \to \n.
\]
Fixing an arbitrary point $\xi \in \cN$, we define $\Upsilon^0 (x) \coloneqq \xi$ for all $\{ x \} \in \cW^0$. Since $\cN$ is path-connected, we can define $\Upsilon^1$ on each segment $Q \in \cW^1$ by taking the geodesic in $\cN$ (or any other Lipschitz curve) joining the two points $\Upsilon^0(\partial Q)$. Finally, since $\pi_1(\cN)=0$, for each $Q \in \cW^2$ the Lipschitz boundary map $\Upsilon^1 \colon \partial Q \to \cN$ is homotopically trivial and can be extended to a Lipschitz map $\Upsilon^2 \colon Q \to \cN$. The Lipschitz constant of $\Upsilon^2$ can be bounded depending on the submanifold $\cN \subset \R^d$ only.

\medskip

Without additional topological assumptions on $\cN$ (i.e., vanishing of higher-dimensional homotopy groups), we cannot extend $\Upsilon^2$ continuously to a higher-dimensional complex, yet we are able to define singular extensions $\Upsilon^j$ for $j=3,\ldots,\d$ such that 
\begin{equation}
\label{eq:upsilon-estimate}
|\nabla \Upsilon^j(x)| \le C_j ( 1 + \dist(x,Y_j \cap Q)^{-1})
%\quad \text{for } x \notin Y_j, 
\quad \text{for each } Q \in \cQ^j \text{ and } x \in Q \setminus Y_j, 
\end{equation}
where $Y_j$ is a finite $(j-3)$-dimensional Lipschitz complex. 

Assuming that $\Upsilon^{j-1}$ satisfying \eqref{eq:upsilon-estimate} is already constructed, we only need to extend it to the interior of each $j$-cube $Q \in \cW^{j}$. To this end, we take the standard radial projection $P_{Q} \colon Q \setminus \{ y_Q \} \to \partial Q$, which is Lipschitz except for the center $y_Q$ of $Q$, and define $\Upsilon^j = \Upsilon^{j-1} \circ P_Q$ on $Q$. 

Let us check the estimate \eqref{eq:upsilon-estimate} for $\Upsilon^j$. The set of discontinuities of $\Upsilon^j$ in $Q$ is exactly 
\[
Y_j \cap Q = \{ y_Q \} \cup P_Q^{-1}(Y_{j-1} \cap \partial Q),
\]
which only adds one dimension to $Y_{j-1}$. Now choose a point $x \in Q \setminus Y_j$ and denote by $Q'$ the face of $Q$ onto which $x$ is projected; there are two possibilities. If $\Upsilon^{j-1}$ happens to be regular on $Q'$, we simply have 
\[
|\nabla \Upsilon^j(x)| \lesssim |\nabla P_Q(x)| \lesssim |x-y_Q|^{-1} \le \dist(x,Y_j \cap Q)^{-1}. 
\]
If $\Upsilon^{j-1}$ is not regular, the inductive assumption \eqref{eq:upsilon-estimate} gives us a weaker estimate 
\[
|\nabla \Upsilon^j(x)| 
\le |\nabla P_Q(x)| \cdot |\nabla \Upsilon^{j-1}(P_Q(x))| 
\lesssim |x-y_Q|^{-1} \cdot \dist(P_Q(x),Y_{j-1} \cap Q')^{-1}.
\]
Noting a similarity of triangles, we obtain 
\[
\frac{\dist(x,Y_j \cap Q)}{|x-y_Q|}
\le \frac{\dist(x,P_Q^{-1}(Y_{j-1} \cap Q'))}{|x-y_Q|} 
= \frac{\dist(P_Q(x),Y_{j-1} \cap Q')}{|P_Q(x)-y_Q|} 
% \lesssim \dist(P_Q(x),Y_{j-1} \cap Q'),
\]
and since $|P_Q(x)-y_Q| \ge \frac{R}{2N}$, the inductive claim \eqref{eq:upsilon-estimate} follows.

\medskip

As the outcome of this inductive construction, we finally obtain the map $P = \Upsilon^{\d}$ which is locally Lipschitz outside the finite $(\d-3)$-dimensional Lipschitz complex $Y \coloneqq Y_\d$. Recall that $\bigcup \mathcal U \cup \bigcup \cW^\d = [-2R,2R]^\d$, and so $P$ is defined on the whole large cube. Since $Y$ is non-empty, one can drop the constant term in \eqref{eq:upsilon-estimate} and infer the Lipschitz estimate 
\[
|\nabla P(x)| \le C \, \dist(x,Y)^{-1}
\quad \text{for } x \notin Y.
\]
To derive the $W^{1,2}$-estimate, first observe that $Y$ is contained in a union of finitely many affine subspaces $L_1,\ldots,L_s$, each of dimension at most $\d-3$. Since 
\[
\int_{[-2R,2R]^\d} \frac{\dd x}{(\dist(x,L_i))^2} < \infty 
\quad \text{for } i=1,\ldots,s 
\]
by Fubini's theorem, it follows that 
\[
 \int_{[-2R,2R]^\d} |\nabla P (x)|^2 \dx \lesssim \int_{[-2R,2R]^\d} \frac{\dd x}{(\dist(x,Y))^2} < \infty. 
\]
Actually, it is this $W^{1,2}$-estimate that we shall use later. 
\end{proof}

With the retraction $P$ in hand, one could hope to simply take the map $P \circ v$ as the required extension. It may happen however that $v$ takes values very near the set of singularities $Y$, and in result $P \circ v$ does not lie in $W^{1,2}$. For this reason, we consider the family of maps $P_a(x) = P(x-a)$ (with singularities shifted to $Y+a$) and compose with $P_a$ for some generic value of $a$. The details of this construction are given below. 

\begin{proof}[Proof of Theorem \ref{th:extensionthm}]
%Take $\varrho > 0$ as in Lemma \ref{la:retraction}, so that the retraction $P$ coincides on $\B_\varrho(\cN)$ with the smooth nearest point projection $\pi_\cN \colon \B_\varrho(\cN) \to \cN$. 
Fitting $\cN \subset \R^d$ into some cube $[-R,R]^d$, we can choose $P$ and $\varrho$ as in Lemma \ref{la:retraction}. We also assume that $v$ takes values in $[-R,R]^d$; the general case follows by a simple reduction argument explained at the end of the proof. 

Since the restriction $\pi_\cN \colon \cN \to \cN$ is the identity and $\cN$ is compact, we can choose $0 < r < \varrho$ such that the maps 
\[
\cN \ni x \mapsto \pi_\cN(x-a) \in \cN 
\]
are uniformly bi-Lipschitz for all $a \in \B_r$. For $a \in \B_r$ we let $P_a(x) = P(x-a)$ and consider the composition $P_a \circ v$. Since 
\[
 \left| \nabla\brac{P_a\circ v}\right| \le |\nabla P_a(v)| \cdot |\nabla v|, 
\]
we can check that $P_a \circ v \in W^{1,2}(\Omega, \n)$ for almost every $a \in B_r(0)$. Indeed, 
\begin{equation*}\label{eq:mainestiamteextension}
\begin{split}
 \int_{B_r(0)}\int_\Omega |\nabla\brac{P_a \circ v}|^2\dx \dd a 
  & \le \int_{\Omega} |\nabla v|^2 \int_{B_r(0)} \left|\nabla P_a(v)\right|^2 \dd a \dx \\
  & \le \int_{\Omega} |\nabla v|^2 \int_{B_r(0)} \left|\nabla P(v-a)\right|^2 \dd a \dx \\
  & \le \int_{\Omega} |\nabla v|^2 \int_{[-2R,2R]^\d} \left|\nabla P (y)\right|^2 \dd y \dx\\
  & \le C \int_{\Omega} |\nabla v|^2  \dx < \infty
  \end{split}
 \end{equation*}
with the constant $C$ depending only on $\n$. We infer that there is $a_0 \in B_r(0)$ such that 
\begin{equation}\label{eq:extensionalmostfinal}
 \int_\Omega |\nabla\brac{P_{a_0} \circ v}|^2\dx \le C |B_r(0)|^{-1} \int_{\Omega} |\nabla v|^2 \dx.
\end{equation}
To fully justify that $\nabla P_a(v) \nabla v$ is indeed the distributional gradient, one should first carry out these estimates for a sequence of smooth maps $P_\eps \to P$ and then pass to the limit, but we leave this standard step to the reader.

Composing $v$ with $P_{a_0}$, we have unnecessarily altered $v$ on the set where it already maps into $\cN$, so we need a small correction. It follows from our previous discussion that $P_{a_0}\big\rvert_\cN$ is invertible and 
\begin{equation}\label{eq:inverseprojection}
 |\nabla \brac{P_{a_0}\big\rvert_{\n}}^{-1}| \le C
\end{equation}
for a uniform constant $C$. We set 
\[
 u = \brac{P_{a_0}\big\rvert_{\n}}^{-1} \circ P_{a_0} \circ v \in W^{1,2}(\Omega, \n), 
\]
hence combining \eqref{eq:extensionalmostfinal} with \eqref{eq:inverseprojection} gives us 
\[
 \int_{\Omega} |\nabla u|^2 \dx 
 \le \| \nabla \brac{P_{a_0}\big\rvert_{\n}}^{-1} \|_{L^\infty}^2 
 \int_{\Omega} |\nabla \brac{P_{a_0}\circ v}|^2 \dx 
 \le C \int_{\Omega} |\nabla v|^2 \dx. 
\]
It is evident that $u \equiv v$ on the whole set $\{ v(x) \in \cN \}$, and this will be used in the sequel. One can also check that $u\big\rvert_{\partial \Omega} =  v\big\rvert_{\partial \Omega}$ follows in the sense of traces. 

\medskip

For the general case of an unbounded $v$, consider the retraction $P_R \colon \R^d \to [-R,R]^\d$ given by the identity on $[-R,R]^\d$ and by the radial projection outside of this cube; note that $P_R$ is Lipschitz with constant $1$. One can then apply the previous construction to $\overline{v} \coloneqq P_R \circ v$ and replace $\overline{v}$ by $v$ in the final claim, as these two maps agree on $\partial \Omega$ and the $W^{1,2}$-energy of $\overline{v}$ does not exceed that of $v$. 
%$\int_{\Omega} |\nabla \overline{v}|^2 \dx \le \int_{\Omega} |\nabla v|^2 \dx$
\end{proof}

\subsection{Uniform boundedness}\label{ss:unifromboundedness}
We will show how Theorem~\ref{th:extensionthm} combined with trace inequalities \Cref{th:trace} implies a uniform bound for the minimizers. Due to the lifting theorem \Cref{th:BethuelChiron} of Bethuel and Chiron this result holds additionally for target manifolds whose fundamental group is finite. 

This is a counterpart of {\cite[Theorem 1.1]{AlmgrenLieb1988}}.

\begin{corollary}\label{co:est}
Assume that $\pi_1(\n)$ is finite and that $\Omega$ is a bounded Lipschitz domain. There exists a constant $C(\Omega,\n)$ such that for any minimizing harmonic map $u\in W^{1,2}(\Omega,\n)$ we have 
\begin{equation}\label{eq:traceestimateLipdom}
 \|\nabla u\|_{L^2(\Omega)} \leq C(\Omega,\n) \sqrt{\|\nabla_T u\|_{L^2(\partial \Omega)}}.
\end{equation}
In particular, if $u \colon \B_r \to \n$ is a minimizing harmonic map, then the following estimate holds
\begin{equation}\label{eq:traceextension}
 \|\nabla u\|_{L^2(B_r)} \aleq \sqrt{r^{\frac{n-1}{2}}\|\nabla_T u\|_{L^2(\partial B_r)}}.
\end{equation}
Let $\Omega \subset \R^n$ be a bounded domain with a $C^1$-boundary and let $y_0\in\partial \Omega$ be any point on the boundary. If $u \colon B_r(y_0)\cap\Omega \to \n$ is a minimizing harmonic map with $u=\varphi$ on $B_r(y_0)\cap \partial \Omega$, then the following estimates hold
\begin{equation}\label{eq:traceestimatedonhalfball}
 \|\nabla u\|_{L^2(B_r(y_0)\cap\Omega)} \aleq \sqrt{r^{\frac{n-1}{2}}\|\nabla_T u\|_{L^2(\partial B_r(y_0)\cap\Omega)} +  r^{\frac{n-1}{2}}\|\nabla \varphi\|_{L^{2}(B_r(y_0)\cap\partial \Omega)}}
\end{equation}
and for any $s> \frac 12$, $p>1$, $sp>1$, and $1<\theta<2$
\begin{equation}\label{eq:traceestimatedonhalfballWsp}
\|\nabla u\|_{L^2(B_r(y_0)\cap\Omega)} \aleq \sqrt{r^{(3-n)\frac{\theta}{2}+(n-2)}\|\nabla_T u\|^{\theta}_{L^2(\partial B_r(y_0)\cap\Omega)}+r^{(sp-(n-1))\frac{\theta}{sp} + n-2}[\varphi]^\frac{\theta}{s}_{W^{s,p}(L^{2}(B_r(y_0)\cap\partial \Omega))}}.
\end{equation}

\end{corollary}
\begin{proof}
In order to prove \eqref{eq:traceestimateLipdom} we apply \Cref{th:BethuelChiron} to $u\in W^{1,2}(\Omega,\n)$, we obtain the existence of a map $\tilde u \in W^{1,2}(\Omega,\widetilde \n)$ such that $u= \pi \circ \tilde u$. Now, since $\pi_1(\n)$ is finite we know that the universal cover $\widetilde \n$ is compact and since it is the universal cover of $\n$ we have $\pi_1(\widetilde \n) = 0$. Thus, we may apply \Cref{th:extensionthm} to $\tilde u$ and obtain the existence of a manifold valued map $\tilde v \in W^{1,2}(\Omega, \widetilde \n)$ satisfying $\tilde v \big\rvert_{\partial \Omega} = \tilde u \big\rvert_{\partial \Omega}$ with the estimate
 \begin{equation}\label{eq:uniformbddextensionthm}
  \|\nabla \tilde v\|_{L^2(\Omega)} \aleq \|\nabla \tilde u^h\|_{L^2(\Omega)},
 \end{equation}
where $\tilde u^h$ is the harmonic extension of $\tilde u\big\rvert_{\partial \Omega}$. 
By \Cref{th:trace}, (equation \eqref{eq:tracespherercombined}), we know that
\begin{equation}\label{eq:tracethminunibdd}
 \|\nabla \tilde u^h\|_{L^{2}(\Omega)} \leq C(\Omega) \|\tilde u\|_{L^\infty(\partial \Omega)}^{\frac{1}{2}} \|\nabla_T \tilde u\|_{L^2(\partial \Omega)}^{\frac{1}{2}} 
 \le C(\Omega, \widetilde \n) \|\nabla_T u\|_{L^2(\partial \Omega)}^{\frac{1}{2}},
\end{equation}
in the last estimate we used that $\widetilde \n$ is compact, $\tilde u\big\rvert_{\partial \Omega} \in \widetilde \n$, and that a.e. $|\nabla u| = |\nabla \tilde u|$.  

On the other hand we set $v = \pi \circ \tilde v \in W^{1,2}(\Omega, \n)$ and we have $|\nabla v| = |\nabla \tilde v|$ a.e., thus,
\[
 \|\nabla v\|_{L^2(\Omega)} = \|\nabla \tilde v\|_{L^2(\Omega)}.
\]
Since $\pi$ is a local isometry, we have $v\big\rvert_{\partial \Omega} = \pi \circ \tilde v \big \rvert_{\partial \Omega} = \pi \circ \tilde u \big \rvert_{\partial \Omega} = u \big\rvert_{\partial \Omega}$ and thus $v$ is a good comparison map and we obtain from the minimality of $u$
\[
 \|\nabla u\|_{L^2(\Omega)} \le \| \nabla v\|_{L^2(\Omega)}.
\]
Combining this with the above inequalities we obtain
\[
 \|\nabla u\|_{L^2(\Omega)} \le \|\nabla v\|_{L^2(\Omega)} = \|\nabla \tilde v\|_{L^2(\Omega)} \aleq \|\nabla \tilde u^h\|_{L^2(\Omega)} \aleq \|\nabla_T u\|^\frac12_{L^2(\Omega)}.
\]
This finishes the proof of \eqref{eq:traceestimateLipdom}. The estimate \eqref{eq:traceextension} follows from scaling. 

We proceed similarly to obtain the other inequalities: we apply \Cref{th:extensionthm} to the lifted map $\tilde u = \pi^{-1} \circ u\in W^{1,2}(B_r^+,\widetilde \n)$, then \eqref{eq:traceestimatedonhalfball} follows from \eqref{eq:tracespherercombined} and \eqref{eq:traceestimatedonhalfballWsp} follows from \eqref{eq:tracehalfspherecombined}. 
\end{proof}

\begin{theorem}[Uniform Boundedness of Minimizers, {\cite[Theorem 2.3 (2)]{AlmgrenLieb1988}}]\label{th:uniformboundedness}
Let $\pi_1(\n)$ be finite. Then the following assertions hold:
\begin{enumerate}
 \item \label{it:uniformboundinterior} Let $u\in W^{1,2}(B_R,\n)$ be a minimizing harmonic map. Then for any $r < R$,
\[
r^{2-n} \int_{B_r} |\nabla u|^2 \dx\leq C\frac{R}{R-r},
\]
where $C(n,\n)$ is an absolute constant.
 
 \hfill
 \item \label{it:uniformboundboundary}Let $\Omega$ be a bounded domain with a $C^1$ boundary. Then, there exists an $R \coloneqq R(\Omega)$ such that the following holds: assume $u \in W^{1,2}(B_{2r}(y_0) \cap \Omega,\n)$ is a minimizing harmonic map, where $y_0 \in \partial \Omega$, then
 \begin{equation}\label{eq:uniformboundbdry}
 r^{2-n}\int_{B_{r}(y_0) \cap \Omega} |\nabla u|^2 \dx \leq C \, \left( 1 + r^{\frac{3-n}{2}}\| \nabla \vp \|_{L^{2}(B_{2r}(y_0) \cap \partial \Omega)} \right),
 %\max \left \{ r^{\frac{3-n}{2}}\|\nabla_T u\|_{L^{2}(B_{2r}^+(0))} , 1 \right \}
\end{equation}
for $0 < r \le R$. 

\hfill
 \item \label{it:uniformboundfractionalboundry}The last statement can be strengthened, assume additionally that $s>\frac12$, $p>1$, and $p>1$. Then for any $1<\theta<2$,
 \begin{equation}\label{eq:uniformboundbdryfractional}
 r^{2-n}\int_{B_{r}(y_0) \cap \Omega} |\nabla u|^2 \dx \leq C \, \left( 1 + r^{(sp-(n-1))\frac{\theta}{sp}}[\varphi]^\frac{\theta}{s}_{W^{s,p}(B_{2r}(y_0) \cap \partial \Omega)} \right),
\end{equation}
for $0 < r \le R$. 
\end{enumerate}
\end{theorem}

\begin{proof}
	Let $D(\rho)\coloneqq \int_{B_\rho(y_0)\cap \Omega} |\nabla u|^2 \dd x$, where $y_0 \in \overline{\Omega}$. This is an absolutely continuous function of $r\in[0,R]$ and by the fundamental theorem of calculus we have for a.e. $r\in [0,R]$
	\begin{equation}\label{eq:derivationofderivativeovervariableboundary}
	\begin{split}
	\frac{d}{d\rho} \brac{\int_{B_\rho(y_0)\cap \Omega} |\nabla u|^2 \dd x}
	&= \frac{d}{d\rho} \brac{\int_{B_\rho(y_0)} |\nabla u|^2 \chi_{\Omega}\dd x}\\
	&= \frac{d}{d\rho} \brac{\int_{0}^\rho \int_{\pl B_r(y_0)} |\nabla u(\xi)|^2 \chi_{\Omega}(\xi) \dhn(\xi) \dd r}\\
%	&= \frac{d}{d\rho} \brac{\int_{0}^\rho \int_{\S^{n-1}} |\nabla u(r \xi + y_0)|^2 \chi_{\Omega}(r \xi + y_0)r^{n-1} \dhn(\xi) \dd r}\\
	&= \int_{\partial B_\rho(y_0)\cap \Omega}|\nabla u|^2 \dhn.
	\end{split}
	\end{equation}

	\underline{\textsc{Proof of \eqref{it:uniformboundinterior}}}:
	
By minimality of $u$ we may apply \Cref{{co:est}} (equation \eqref{eq:traceextension}) and we get
\[
D(\rho) \coloneqq \int_{B_\rho} |\nabla u|^2 \dx \leq  C\ \rho^{\frac{n-1}{2}}\brac{\int_{\partial B_\rho} |\nabla_T u|^2 \dhn}^{\frac{1}{2}},
\]
for a uniform constant $C>0$. This gives
\[
 D(\rho) \leq \rho^{\frac{n-1}{2}}\, C\, \sqrt{D'(\rho)}.
\]
Taking the square, we obtain
\[
 \frac{1}{C^2} \rho^{1-n}\leq \frac{D'(\rho)}{D(\rho)^2}.
\]
Integrating the last inequality from $r$ to $R$ we obtain 
\[
 (n-2)\left (r^{2-n} - R^{2-n} \right )\frac{1}{C^2} \leq \frac{1}{D(r)} - \frac{1}{D(R)}
\]
In particular,
\[
 r^{2-n}D(r) \leq \frac{C^2}{n-2} \frac{R^{n-2}}{R^{n-2}-r^{n-2}} \le \frac{C^2}{n-2} \frac{R}{R-r}.
\]
\hfill

\underline{\textsc{Proof of \eqref{it:uniformboundboundary}}}:

 Denote $D(\rho) \coloneqq \|\nabla u\|_{L^2(B_r(y_0)\cap \Omega)}^2$ and $A \coloneqq r^{\frac{n-1}{2}} \| \nabla \vp \|_{L^{2}(B_{2r}(y_0)\cap \partial \Omega)}$. Using \eqref{eq:derivationofderivativeovervariableboundary} we can restate \eqref{eq:traceestimatedonhalfball} as 
\[
 D(\rho) \le C \left( \rho^{\frac{n-1}{2}} \sqrt{D'(\rho)} + A \right) 
 \qquad \text{for } 0 < \rho \le 2r. 
\]
Since our aim is an estimate $D(r) \lesssim r^{n-2} + A$, we may assume that $D(r) \ge 2CA$ with $C$ as above. Then 
\[
 D(\rho) \le 2C \rho^{\frac{n-1}{2}} \sqrt{D'(\rho)}
 \qquad \text{for } r \le \rho \le 2r. 
\]
Rewriting this as the differential inequality $(-D(\rho)^{-1})' \ge 4 C^{-2} \rho^{1-n}$ and integrating, we obtain 
\[
D(r)^{-1} - D(2r)^{-1} \ge 4 C^{-2} \int_r^{2r} \rho^{1-n} \dd \rho.
\] 
The final claim now follows from $D(2r)^{-1} \ge 0$. 

\underline{\textsc{Proof of \eqref{it:uniformboundfractionalboundry}}}:

 The proof follows almost exactly as the proof of \eqref{it:uniformboundboundary}. The only difference is that in place of \eqref{eq:traceestimatedonhalfball} we use \eqref{eq:traceestimatedonhalfballWsp} which leads us to the inequality:
 \[
 D(\rho) \le C \left( \rho^{(3-n)\frac{\theta}{2}+(n-2)} \brac{D'(\rho)}^{\frac{\theta}{2}} + B \right) 
 \qquad \text{for } 0 < \rho \le 2r,   
 \]
where $B \coloneqq r^{(sp-(n-1))\frac{\theta}{s}+n-2}[\varphi]^p_{W^{s,p}(B_{2r}(y_0)\cap \partial \Omega)}$. Reasoning as before and having in mind that by assumption $1<\theta<2$ we may rewrite the latter inequality as
\[
 \brac{D(\rho)^{-\frac{2}{\theta}+1}}'\ge \tilde C \rho^{3-n - \frac{2}{\theta}(2-n)}.
\]
Integrating over $(r,2r)$, we finish the proof.
\end{proof}

\begin{remark}\label{rem:gentargetnouniformbound}
Theorem~\ref{th:uniformboundedness} does not hold for general target manifolds, it is not true for example for $\S^1$ and $\mathbb T^2$. A simple counterexample is due to Hardt--Kinderlehrer--Lin \cite[p.22]{HardtKinderlehrerLinstable}:
the energy minimizers $u_j\in W^{1,2}(B^n,\S^1)$, $u_j(x) = (\cos jx_1, \sin jx_1)$ have unbounded energies on each subdomain. 
\end{remark}

\subsection{Caccioppoli inequality and higher local integrability}\label{ss:caccioppoli}

In this section we derive the Caccioppoli inequality for minimizing harmonic maps. A Caccioppoli type inequality was obtained by Schoen and Uhlenbeck in their pioneering work \cite[Lemma 4.3]{SU1} in order to obtain strong convergence of minimizers. Later on, this result was generalized by Hardt and Lin to the case of minimizing $p$-harmonic maps \cite[Corollary 2.3]{HL1987}. As observed there, in case the target manifold is simply connected, the result might be strengthened --- the small oscillation condition from \cite[Lemma 4.3]{SU1} can be omitted (see also \cite[Lemma 2.3]{HardtKinderlehrerLin1986}). Finally, thanks to Luckhaus Lemma \cite[Lemma 1]{L88}, it was proved that the smallness condition can be also omitted for a general target manifold $\n$. We refer the interested reader to \cite[Section 2.8, Lemma 1]{Simon1996}.

The $W^{1,2}$-extension property (Theorem \ref{th:extensionthm}) will play here a~crucial role as it provides a~tool to compare energies of maps that agree on the boundary but do not have to take values in the manifold. We remark that this could also be done using Luckhaus Lemma. We will use a variant of an iteration lemma, see \cite[Chapter V, Lemma 3.1]{Giaquinta}.

\begin{lemma}[Iteration lemma]\label{la:iteration}
 Let $0 \le a < b < \infty$ and $f \colon [a,b] \to [0,\infty)$ be a bounded function. Suppose that there are constants $\theta \in (0,1)$, $A,\,\Lambda,\,\alpha > 0$ such that 
\begin{equation}\label{eq:iterationinequality}
 f(s) \le \theta f(t) + \frac{A}{(t-s)^\alpha}+ \Lambda 
 \quad \text{for all } a \le s < t \le b.
\end{equation}
Then we obtain the bound 
\[
 f(r)\le C \frac{A}{(b-r)^\alpha} + \frac{\Lambda}{1-\theta} 
 \quad \text{for all } a \le r < b 
\]
with some constant $C(\theta,\alpha) > 0$. 
\end{lemma}

\begin{proof}
We fix $a \le r < b$ and define the sequence $r_i$ by $r_0 = r$ and $r_{i+i} - r_i = (1-\tau)\tau^i(b-r)$, with $\tau \in (0,1)$ to be chosen later. By iterating the inequality \eqref{eq:iterationinequality} $\ell$-times, we obtain
\[
\begin{split}
   f(r_0) &\le \theta f(r_1) + \frac{A}{(r_1 - r_0)^\alpha} + \Lambda= \theta f(r_1) + \frac{A}{(1-\tau)^\alpha(b - r)^\alpha} + \Lambda\\
   &\le \theta \brac{\theta f(r_2) + \frac{A}{(1-\tau)^\alpha(b - r)^\alpha} + \Lambda} + \frac{A}{(1-\tau)^\alpha(b - r)^\alpha} + \Lambda\le\ldots\\
   &\le \theta^\ell f(r_\ell) + \frac{A}{(1-\tau)^\alpha(b-r)^\alpha}\sum_{i=0}^{\ell-1}\theta^i \tau^{-i\alpha} + \Lambda\sum_{i=0}^{\ell-1} \theta^i.
   \end{split}
 \]
Now we choose $\tau$ in such a way that $\tau^{-\alpha}\theta < 1$ and let $\ell\rightarrow \infty$ in the above inequality.
\end{proof}

\begin{proposition}[Caccioppoli inequality]\label{pr:cac}
Let $\n$ be such that $\pi_1(\n)$ is finite and let $\pi\colon \widetilde \n \to \n$ be its universal covering. Then there is a constant $C(n,\cN) > 0$ such that the following holds:
\hfill
\begin{enumerate}
 \item \label{it:Caccioppoliinterior} Let $u\in W^{1,2}(\Omega,\n)$ be a minimizing harmonic map and let $u=\pi\circ \tilde u$, $\tilde u \in W^{1,2}(\Omega, \widetilde \n)$, $B_{2r}(y)\subset\subset\Omega$, then 
\begin{equation*}\label{eq:comparisoncaccioppoli}
\int_{B_{r}(y)}|\nabla u|^2 \dx \leq C r^{-2} \int_{B_{2r}(y)} |\tilde u-(\tilde u)_{B_{2r}(y)}|^2 \dx,
\end{equation*} 
where $(\tilde u)_{B_{2r}(y)}$ denotes the mean value of $\tilde u$ on $\B_{2r}(y)$.

\hfill
\item \label{it:Caccioppoliboundary}Assume $\Omega \subset \R^n$ is a bounded domain with $C^1$-boundary. Let $u\in W^{1,2}(\Omega,\n)$ be a minimizing harmonic map, and let $\vp\in W^{\frac{1}{2},2}(\partial \Omega,\n)$ be the trace of $u$ on $\partial \Omega$. Then for all $r>0$ and $y_0\in\partial \Omega$ we have
 \begin{equation*}\label{eq:comparisoncaccioppoliboundary}
  \int_{B_r(y_0)\cap \Omega}|\nabla u|^2 \dx \le Cr^{-2} \int_{B_{2r}(y_0)\cap \Omega}|\tilde u(x) - \tilde \vp^{ext}(x)|^2\dx + C\int_{B_{2r}(y_0)\cap \Omega}|\nabla \tilde \vp^{ext} (x)|^2 \dx,
 \end{equation*}
where $u=\pi\circ \tilde u$, $\tilde u\in W^{1,2}(\Omega,\widetilde \n)$, $\vp = \pi\circ \tilde \vp \in W^{\frac12,2}(\partial \Omega, \widetilde{\mathcal N})$ and $\tilde \vp^{ext} \in W^{1,2}(B_{2r}(y_0)\cap \Omega)$ is any map with $\tilde \varphi^{ext} = \tilde \vp = \tilde u$ on $B_{2r}(y_0)\cap \partial \Omega$.

\end{enumerate}
\end{proposition}

\begin{proof}
The proofs of these two statements are similar; we will treat the boundary case \eqref{it:Caccioppoliboundary} here.

We use \Cref{th:BethuelChiron} and lift $u = \pi \circ \tilde u$, where $\tilde u \in W^{1,2}(\Omega,\widetilde \n)$ and $\widetilde \n$ is simply connected and compact (by the assumption that $\pi_1(\n)$ is finite). Since $\pi$ is a local isometry we have $\varphi = \pi \circ \tilde \varphi$ on $\partial \Omega$ and we take any extension $\tilde \varphi^{ext}\in W^{1,2}(B_{2r}(y_0)\cap \Omega)$ with $\tilde \varphi^{ext} = \tilde \varphi$ on $ B_{2r}(y_0)\cap \partial \Omega$. 

Fix $r \le \rho < R \le 2r$ and let 
\[
\tilde v(x) = \eta(x) \tilde \vp^{ext}(x) + (1-\eta(x)) \tilde u(x),
\]
where $\eta\in C_c^\infty(B_{R}(y_0),[0,1])$ is a cutoff function such that $\eta \equiv 1$ in $B_\rho(y_0)$, $\eta\equiv 0$ outside $B_{R}(y_0)$ and $|\nabla \eta|\le \frac{C}{R-\rho}$. It follows that $\tilde v \in W^{1,2}(\B_{2r}(y_0)\cap \Omega)$ coincides with $\tilde u$ on $\partial (B_{2r}(y_0)\cap \Omega)$. Therefore, by applying \Cref{th:extensionthm} we obtain a~map $\tilde w \in W^{1,2}(\B_{2r}(y_0)\cap \Omega, \widetilde \n)$ with $\tilde w = \tilde v$ on $\partial (B_{2r}(y_0)\cap \Omega)$ with the estimate
\begin{equation*}
  \int_{B_{2r}(y_0)\cap \Omega} |\nabla \tilde w|^2 \dx \le C \int_{B_{2r}(y_0)\cap \Omega} |\nabla \tilde v|^2 \dx.
\end{equation*}
We define now $w = \pi \circ \widetilde w \in W^{1,2}(B_{2r}(y_0)\cap \Omega,\n)$ and as before observe that $|\nabla w| = |\nabla \tilde w|$ a.e. and that $w\big\rvert_{\partial (B_{2r}(y_0)\cap \Omega)} = u \big\rvert_{\partial(B_{2r}(y_0)\cap \Omega)}$ thus, $w$ is a good comparison map and from the minimality of $u$ we get
\begin{equation}\label{eq:extineq}
 \int_{B_{R}(y_0)\cap \Omega} |\nabla u|^2 \le \int_{B_{R}(y_0)\cap \Omega} |\nabla w|^2 = \int_{B_{R}(y_0)\cap \Omega} |\nabla \tilde w|^2 \le C \int_{B_{R}(y_0)\cap \Omega} |\nabla \tilde v|^2 \dx.
\end{equation}
We compute
\[
 \nabla \tilde v(x) = (1-\eta(x)) \nabla \tilde u(x) - \nabla \eta(x) (\tilde u(x) - \tilde \vp^{ext}(x)) + \eta(x) \nabla \tilde \vp^{ext}(x),
\]
thus, from \eqref{eq:extineq} and $|\nabla u| = |\nabla \tilde u|$ a.e., we have
\[
\begin{split}
\int_{B_\rho(y_0)\cap \Omega } |\nabla u|^2 \dx &\le C\int_{(B_R(y_0)\setminus B_\rho(y_0))\cap \Omega}|\nabla \tilde u|^2 \dx +\frac{C}{(R-\rho)^2} \int_{B_R(y_0)\cap \Omega}|\tilde u(x) - \tilde \vp^{ext}(x)|^2\dx\\
&\quad + C\int_{B_R(y_0)\cap \Omega} |\nabla \tilde \vp^{ext}(x)|^2 \dx\\
&= C\int_{(B_R(y_0)\setminus B_\rho(y_0))\cap \Omega}|\nabla u|^2 \dx +\frac{C}{(R-\rho)^2} \int_{B_R(y_0)\cap \Omega}|\tilde u(x) - \tilde \vp^{ext}(x)|^2\dx\\
&\quad + C\int_{B_R(y_0)\cap \Omega} |\nabla \tilde \vp^{ext}(x)|^2 \dx.
\end{split}
\]
By a hole-filling argument, there is a $0 < \theta < 1$ such that 
\[
\begin{split}
 \int_{B_\rho(y_0)\cap \Omega} |\nabla u|^2 \dx &\le \theta\int_{B_R(y_0)\cap \Omega}|\nabla u|^2 \dx +\frac{C}{(R-\rho)^2} \int_{B_R(y_0)\cap \Omega}|\tilde u(x) - \tilde \vp^{ext}(x)|^2\dx\\
& \quad+ C\int_{B_R(y_0)\cap \Omega} |\nabla \tilde \vp^{ext}(x)|^2 \dx
\end{split}
 \]
for all $r\le \rho<R\le 1$. 

Thus, by Lemma \ref{la:iteration} we obtain
\[
 \int_{B_\rho(y_0)\cap \Omega} |\nabla u|^2 \dx \le \frac{C}{(R-\rho)^2} \int_{B_R(y_0)\cap \Omega}|\tilde u(x) - \tilde\varphi^{ext}(x)|^2\dx + C\int_{B_R(y_0)\cap \Omega} |\nabla \tilde \varphi^{ext}(x)|^2 \dx.
\]
We conclude our claim by taking $\rho=r$ and $R=2r$.
\end{proof}

As consequences of Poincar\'{e} inequality, Sobolev embedding, and Gehring Lemma we readily obtain the following, see also \cite[Theorem 4.1]{HardtKinderlehrerLinstable}.

\begin{corollary}[Higher integrability]\label{co:higherintint}
Let $\Omega\subset\R^n$ be a bounded domain with a $C^1$ boundary, $\pi_1(\n)$ be finite, and let $u\in W^{1,2}(\Omega,\n)$ be a minimizing harmonic map. There exist constants $q > 2$ and $C>0$ such that: 
\hfill

\begin{enumerate}
 \item \label{it:higherintegrabilityinterior} If $B_{2r}(y)\subset\subset\Omega$, then
\[
 \brac{r^{q-n}\int_{B_{r}(y)}|\nabla u|^q \dx}^{1/q} \le C \brac{r^{2-n}\int_{B_{2r}(y)} |\nabla u|^2 \dx}^{1/2}
\]
and the constants $q,\ C$ do not depend on $\Omega;$
% \item If $u\in W^{1,2}(\Omega,\n)$ is a minimizing harmonic map, let $y\in\partial\Omega$. Then for all $r>0$ 
% %there exists
% \[
%  \brac{r^{q-n} \int_{B_{r}(y)\cap \Omega}|\nabla u|^q \dx}^{1/q} \le  C\brac{r^{2-n} \int_{B_{2r}(y)\cap \Omega} |\nabla u|^2 \dx}^{1/2} + Cr^{\frac{3-n}{2}} \|\nabla_T u\|_{L^{2}(B_{2r}(y)\cap \partial \Omega)}.
% \]
% \kasia{Moreover, if for $B_R\supset B_{2r}$ the boundary $B_R\cap\partial \Omega  $ is a graph of a bi-Lipschitz function $\zeta$, then the constants $C$ and $q$ do not depend on the entire $\Omega$ but only on the bi-Lipchitz norm of $\zeta$. }
%\item If $u\big\rvert_{\partial \Omega} = \vp \in W^{1,2}(\partial \Omega, \n)$, then for all $r>0$, $y_0\in\partial\Omega$ we have
%\[
%\brac{r^{q-n} \int_{B_{r}(y_0)\cap \Omega}|\nabla u|^q \dx}^{1/q} \aleq  \brac{r^{2-n} \int_{B_{2r}(y_0)\cap \Omega} |\nabla u|^2 \dx}^{1/2} + r^{\frac{3-n}{2}} \|\nabla \vp\|_{L^{2}(B_{r}(y_0)\cap \partial \Omega)}.
%\]
%
\item \label{it:higherintegrabilityboundary}If $u\big\rvert_{\partial \Omega} = \vp \in W^{s,p}(\partial \Omega, \n)$, $s>\frac12$, and $p>1$, then for all $r>0$, $y_0\in\partial\Omega$ we have
\[
\begin{split}
 \brac{r^{q-n} \int_{B_{r}(y_0)\cap \Omega}|\nabla u|^q \dx}^{1/q} 
 &\le  C\brac{r^{2-n} \int_{B_{2r}(y_0)\cap \Omega} |\nabla u|^2 \dx}^{1/2}\\
 &\quad + Cr^{(sp-(n-1))\frac{1}{2sp}} [\vp]_{W^{s,p}(B_{2r}(y_0)\cap \partial \Omega)}^{\frac{1}{2s}}.
\end{split}
 \]
\end{enumerate}
\end{corollary}

\begin{proof}
Applying \Cref{th:BethuelChiron} we have $u = \pi \circ \tilde u $, where $\tilde u \in W^{1,2}(\Omega, \widetilde \n)$ and $\vp = \pi\circ \tilde \vp$ with $\tilde \vp \in W^{s,p}(\partial \Omega, \widetilde \n)$. 

\underline{We begin with the proof of \eqref{it:higherintegrabilityinterior}}. We have by Poincar{\'e}-Sobolev inequality
\begin{equation}\label{eq:reversehoeldersobolev}
 \begin{split}
 \int_{B_{2r}(y)} |\tilde u - (\tilde u)_{B_{2r}(y)}|^2 \dx 
 &\le C r^2 \brac{\int_{B_{2r}(y)}|\nabla \tilde u |^{\frac{2n}{n+2}} \dx}^{\frac{n+2}{n}}\\
 &= C r^2 \brac{\int_{B_{2r}(y)}|\nabla u |^{\frac{2n}{n+2}} \dx}^{\frac{n+2}{n}},
\end{split}
 \end{equation}
where we used that $|\nabla u| = |\nabla \tilde u|$ a.e.. 

Combining this with Proposition \ref{pr:cac} \eqref{eq:comparisoncaccioppoli}, we obtain
\[
 \int_{B_r(y)} |\nabla u|^2 \dx \le \brac{\int_{B_{2r}(y)}|\nabla u |^{\frac{2n}{n+2}} \dx}^{\frac{n+2}{n}}.
\]
Thus applying Gehring lemma \cite[p.122]{Giaquinta} (see also \cite{Iwaniec-Gehring}) we conclude.

\underline{In order to prove part \eqref{it:higherintegrabilityboundary}} we estimate 
\[
\int_{B_{2r}(y_0)\cap \Omega} |\tilde u (x) - \tilde \vp^h (x)|^2 \dx \aleq r^2 \brac{\int_{B_{2r}(y_0)\cap \Omega} |\nabla u|^\frac{2n}{n+2} \dx}^{\frac{n+2}{n}}
\]
in the same way as \eqref{eq:reversehoeldersobolev}. 

Applying \Cref{pr:cac} \eqref{eq:comparisoncaccioppoliboundary} with $\tilde \vp^{ext} = \tilde \vp^h$, where $\tilde \vp^h\colon B_{2r}(y_0)\cap \Omega \to \R^d$ is the harmonic extension of $\tilde \vp\colon B_{2r}(y_0)\cap \partial \Omega \to \widetilde \n$ 
\[
\int_{B_r(y_0)\cap \Omega} |\nabla u|^2 \dx \aleq \brac{\int_{B_{2r}(y_0)\cap \Omega} |\nabla u|^\frac{2n}{n+2} \dx}^{\frac{n+2}{n}} +  \int_{B_{2r}(y_0)\cap \Omega} |\nabla \tilde \vp^h|^2 \dx.
\]
Thus we may apply Gehring lemma and obtain existence of a number $q>2$ such that
\[
\brac{r^{q-n}\int_{B_r(y_0)\cap \Omega} |\nabla u|^q \dx}^\frac{1}{q} \aleq \brac{r^{2-n}\int_{B_{2r}(y_0)\cap \Omega} |\nabla u|^2 \dx}^{\frac{1}{2}} +  \brac{r^{2-n}\int_{B_{2r}(y_0)\cap \Omega} |\nabla \tilde \vp^h|^2 \dx}^{\frac{1}{2}}.
\]
%r^{(n-1)\brac{1+\frac{1}{sp}}}[u]_{W^{s,p}(B_{2r}(y_0)\cap \partial \Omega)}^{\frac{1}{2s}}
Additionally, we have by the trace inequality, the Gagliardo--Nirenberg inequality for any $s > \frac12$, $p>1$, and by the compactness of $\widetilde{ \mathcal N}$
\[
\begin{split}
\brac{r^{2-n}\int_{B_{2r}(y_0)\cap \Omega} |\nabla \tilde \vp^h|^2 \dx }^\frac12 
&\aleq r^{\frac{2-n}{2}}[\tilde \vp]_{W^{\frac12,2}(B_{2r}(y_0)\cap \partial \Omega)}\\
& \aleq r^{(sp-(n-1))\frac{1}{2sp}}\|\tilde \vp\|^{1-\frac{1}{2s}}_{L^{\infty}(B_{2r}(y_0)\cap \partial \Omega)}[\tilde \vp]_{W^{s,p}(B_{2r}(y_0)\cap \partial \Omega)}^{\frac{1}{2s}}\\
&\aleq r^{(sp-(n-1))\frac{1}{2sp}} [\vp]_{W^{s,p}(B_{2r}(y_0)\cap \partial \Omega)}^{\frac{1}{2s}},
\end{split}
\]
where in the last inequality we used the Lipschtiz continuity of the universal cover $\pi\colon \widetilde{\mathcal N}\to \n$. This finishes the proof.
\end{proof}

\section{Strong convergence for minimizers and its consequences}
\label{sec:strong-convergence}

Historically, compactness of minimizers has been a huge challenge. Partial results in this direction were obtained by Schoen--Uhlenbeck \cite[Lemma 4.3]{SU1} (tangent maps), then by Hardt--Lin \cite[Theorem 6.4]{HL1987} (target manifolds with $\pi_1(\n)=0$), and finally the general case was solved with the help of the celebrated Luckhaus Lemma \cite[Lemma 1]{L88}. In our special case when $\pi_1(\n)$ is finite we may lift our initial map, and since the universal cover $\widetilde \n$ is compact we can apply the extension property (Theorem~\ref{th:extensionthm}), similarly as in \cite{HardtKinderlehrerLin1986}. This simplifies the situation. Here we present a proof inspired by \cite[Theorem 6.4]{HL1987}.

This is a counterpart of \cite[Theorem 1.2]{AlmgrenLieb1988}.
\begin{theorem}[Strong convergence of minimizers]\label{th:bsc}
Let $\pi_1(\n)$ be finite. Then the following assertions hold:

\begin{enumerate}
\item \label{it:strongconvergenceinterior} Let $\Omega\subset\R^n$ be a bounded domain with a $C^1$-boundary and $u_i \in W^{1,2}(\Omega,\n)$ be a sequence of minimizing harmonic maps. Then, up to taking a subsequence $i \to \infty$, we find $u \in W^{1,2}(\Omega, \n)$ which is a minimizer in any subdomain $\Omega' \Subset \Omega$ and $u_i\rightarrow u$ strongly in $W^{1,2}_{loc}(\Omega,\n)$. 

\hfill
\item \label{it:stongconvergenceboundary} Let $u_i \in W^{1,2}(B^+_1(0),\n)$ be a sequence of minimizing harmonic maps. Set $\vp_i \coloneqq u_i$ on $T_1$ and assume additionally that 
\[
 \sup_{i \in \N} [\varphi_i ]_{W^{s,p}(T_1)}  < \infty
\]
for some $s> \frac12$, $p>1$, and $sp>1$.

Then, up to taking a subsequence $i \to \infty$, we find $u \colon \B^+_1(0) \to \n$ such that $u \in W^{1,2}(\B_r^+(0),\n)$ for any $r \in (0,1)$ and $u_i\rightarrow u$ strongly in $W^{1,2}(B_{r}^+(0),\n)$. Moreover, for every $r \in (0,1)$, the map $u$ is a minimizing harmonic map in $B_r^+(0)$.

\hfill
\item \label{it:strongconvergencegeneral} Let the domain $\Omega$ and the maps $u_i$ be as in \eqref{it:strongconvergenceinterior}. Assume additionally that their traces $\vp_i$ are uniformly bounded in
$W^{s,p}(\partial \Omega, \cN)$ for some $s>\frac12$, $p>1$, $sp >1$. Then up to taking a subsequence, $u_i \to u$ strongly in $W^{1,2}(\Omega, \cN)$ and $u$ is minimizing in $\Omega$.

\end{enumerate}
\end{theorem} 
We will need the following lemma. 
\begin{lemma}[Poincar\'e-type Lemma]\label{la:1dpoinc}
Let $f \in W^{1,2}(B^+_1(0))$ be such that $f = 0$ on $T_{3/4}$ in the sense of trace. Then, for any $\delta \in (0,\frac{1}{2})$,
\[
 \int_{T_{3/4} \times (0,\delta)} |f|^2 \dx \le \delta^2 \int_{T_{3/4} \times (0,\delta)} |\nabla f|^2 \dx.
\]
\end{lemma}
\begin{proof}
If a function $\varphi \colon [0,\delta] \to \R$ is absolutely continuous, the fundamental theorem of calculus implies 
\[
 \int_0^\delta |\varphi(t) - \varphi(0)|^2 \dd t \leq \delta^2 \int_0^\delta |\varphi'(t)|^2 \dd t. 
\]
Since $f$ is absolutely continuous on almost all lines, for almost all $x' \in T_{3/4}$ we have
\[
 \int_0^\delta |f(x',t)|^2 \dd t 
 = \int_0^\delta |f(x',t) - f(x',0)|^2 \dd t 
 \leq \delta^2 \int_0^\delta |\nabla f(x',t)|^2 \dd t.
\]
Integrating this over $T_{3/4}$, we obtain our claim.
\end{proof}

\begin{proof}[Proof of Theorem~\ref{th:bsc}]
The proof of \eqref{it:strongconvergenceinterior} follows in a manner similar to \eqref{it:stongconvergenceboundary}, see for example \cite[Theorem 6.4]{HL1987}. For this reason, we skip the details and concentrate on \eqref{it:stongconvergenceboundary}. 

\underline{\textsc{Proof of \eqref{it:stongconvergenceboundary}:}}
From Theorem~\ref{th:uniformboundedness}~\eqref{it:uniformboundfractionalboundry} we have for any $\theta\in(1,2)$ and any $r<1$
\begin{equation}\label{eq:uniformboundednessinstrongconvergence}
\int_{B_r^+(0)} |\nabla u_i|^2 \dx \aleq r^{n-2} + r^{(sp-(n-1))\frac{\theta}{sp} + n-2}[\vp_i]^{\frac{\theta}{s}}_{W^{s,p}(T_{2r})}. 
\end{equation}
If $sp>\theta$ then $(sp-(n-1))\frac{\theta}{sp} + n-2>1$, thus since 
\eqref{eq:uniformboundednessinstrongconvergence} holds for any $1<\theta<2$ and since $sp>1$ we can take $\theta$ small enough so that $sp>\theta$. Thus, 
\begin{equation*}
\int_{B_r^+(0)} |\nabla u_i|^2 \dx \aleq 1 + [\vp_i]^{\frac{\theta}{s}}_{W^{s,p}(T_{2r})}.
\end{equation*}
By assumption the boundary maps $\vp_i$ are uniformly bounded in $W^{s,p}(T_1)$, consequently
\[
 \sup_{i \in \N} [u_i]_{W^{1,2}(B^+_r(0))} < \infty \quad \mbox{for any $r \in (0,1)$}.
\]
In particular, up to taking a subsequence and diagonalizing we find $u \colon \B^+_1(0) \to \n$ which is a weak $W^{1,2}$-limit, and strong $L^2$-limit of $u_i$ in each ball $B^+_r(0)$, and $\varphi$ is the weak $W^{s,p}$-limit of $\varphi_i$ on each $T_r$, such that $\varphi$ is the trace of $u$.

We need to show that $u$ is a minimizer in $\B^+_r(0)$ and that $u_i \to u$ strongly with respect to the $W^{1,2}$-norm in $B^+_r(0)$ for every $r \in (0,1)$. For simplicity of the notation we shall assume that $r = \frac{1}{2}$.

Since the boundary maps $\vp_i$ are uniformly bounded in $W^{s,p}$, \Cref{co:higherintint}~\eqref{it:higherintegrabilityboundary} implies uniform higher integrability of $u_i$. Namely, for some fixed $q > 2$ we have
\begin{equation}\label{eq:bsc:higherintegr}
\sup_{i} \int_{B^+_{3/4}(0)} |\nabla u_i|^q \dx < \infty.
\end{equation}

%We start with showing minimality of $u$ in $\B_{1/2}^+$. To this end, 
Fix a competitor $v \in W^{1,2}(\B_{3/4}^+(0),\cN)$, i.e., a map coinciding with $u$ outside $\B_{1/2}^+(0)$, in particular $u = v$ on $T_{3/4}(0)$. 

Using $v$, we construct a competitor for $u_i$. We do so by an interpolation on a set $I_\delta$ which separates $\Omega_\delta$ and $\mho_\delta$, which are defined as follows:
\begin{equation}\label{eq:Omegadeltadefinition}
\begin{split}
 \Omega_\delta &\coloneqq B_{1/2}^+(0) \setminus \brac{\R^{n-1} \times (0,2\delta)};\\
 I_\delta &\coloneqq B^+_{1/2+\delta}(0) \setminus \brac{\Omega_\delta \cup \brac{\R^{n-1} \times (0,\delta)} };\\
 \mho_\delta &\coloneqq B_{3/4}^+(0) \setminus \brac{I_\delta \cup \Omega_\delta}.
\end{split}
 \end{equation}
\begin{center}
\begin{tikzpicture}[line cap=round,line join=round,>=triangle 45,x=1cm,y=1cm]
 \fill[black!40, opacity=0.2] (-5,0) -- (5,0) arc(0:180:5) --cycle;
 \fill[blue!40, opacity=0.4] ([shift=(5:2.75cm)]0,0) arc(5:175:2.75cm)-- cycle;
 \fill[red!40, opacity=0.4] ([shift=(10:2.5cm)]0,0) arc(10:170:2.5cm)-- cycle;
\draw[black] (-5,0) -- (5,0) arc(0:180:5) --cycle;
\draw[blue] ([shift=(5:2.75cm)]0,0) arc(5:175:2.75cm)-- cycle;
 \draw[red] ([shift=(10:2.5cm)]0,0) arc(10:170:2.5cm)-- cycle;
 \node at (3.5,2) {$\mho_\delta$};
 \node at (1,0.7) {$\Omega_\delta$};
 \node at (2,1.7) {\small $I_\delta$};
\end{tikzpicture}

\end{center}

Observe that $\partial I_\delta$ is separated into two parts, the inner part being $\partial \Omega_\delta$ and the outer being $\partial (I_\delta \cup \Omega_\delta)$. Choose a cut-off function $\eta \in C^\infty$ such that $\eta \in [0,1]$, $\eta_\delta \equiv 1$ in $\Omega_\delta$, $\eta_\delta \equiv 0$ in $\mho_\delta$, and $|\nabla \eta_\delta| \aleq \frac{1}{\delta}$.

We glue $v$ with $u_i$ by defining the map  
\begin{equation*}
 v_{\delta,i} \coloneqq \eta_\delta v + (1-\eta_\delta) u_i = u_i + \eta_\delta (v-u_i), 
 %\quad \mbox{in $I_\delta$}
\end{equation*}
%which coincides with $v$ in $\Omega_\delta$ and with $u_i$ in $\mho_\delta$. 
However, $w_{\delta,i}$ does not map into $\cN$ in the intermediate region $I_\delta$. This can be fixed by using the extension theorem (Theorem~\ref{th:extensionthm}). First we need to lift the maps $u = \pi \circ \tilde u$, $v=\pi\circ \tilde v$, $u_i = \pi \circ \tilde u_i$ where $\tilde u,\, \tilde v, \tilde u_i \in W^{1,2}(B_{3/4}^+(0),\widetilde \n)$ and define
\begin{equation}\label{eq:vdeltait}
 \tilde v_{\delta,i} = \eta_\delta \tilde v + (1-\eta_\delta)\tilde u_i.
\end{equation}
Now, recalling that $\widetilde \n$ is a simply connected compact manifold we may apply Theorem~\ref{th:extensionthm} on the region $I_\delta$ and obtain existence of a manifold valued map $\tilde w_{\delta,i}\in W^{1,2}(I_\delta,\widetilde \n)$ which agrees with $\tilde v_{\delta,i}$ on the boundary $\tilde w_{\delta,i} \big\rvert_{\partial I_\delta}= \tilde v_{\delta,i}\big\rvert_{\partial I_\delta}$ and 
\begin{equation}\label{eq:nv1}
 \int_{I_\delta} |\nabla \tilde w_{\delta,i}|^2 \dx \le C\int_{I_\delta} |\nabla \tilde v_{\delta,i}|^2 \dx
\end{equation}
with a constant independent of $i$ and $\delta$. We extend $\tilde w_{\delta,i}$ into $B_{3/4}^+(0)$ by setting $\tilde w_{\delta,i} \equiv \tilde v$ in $\Omega_\delta$, $\tilde w_{\delta,i} \equiv \tilde u_i$ in $\mho_\delta$. 
%\[
% v_{\delta,i} = \begin{cases}
%                 v \quad \mbox{in $\Omega_\delta$},\\
%                 u_i \quad \mbox{in $\mho_\delta$},
%                \end{cases}
%\]
Now we define $w_{\delta,i} = \pi \circ \tilde w_{\delta,i} \in W^{1,2}(B_{3/4}^+(0),\n)$ and note that $w_{\delta,i}\big\rvert_{\partial B_{3/4}^+(0)} = u_i \big\rvert_{\partial B_{3/4}^+(0)}$. In particular, $w_{\delta,i}$ is a competitor for $u_i$ in $B^+_{3/4}(0)$, and the minimizing property of $u_i$ implies 
\begin{equation}\label{eq:sc:1}
\begin{split}
 \int_{B_{3/4}^+(0)} |\nabla u_i|^2 \dx &\leq \int_{B_{3/4}^+(0)} |\nabla w_{\delta,i}|^2 \dx\\
 & = \int_{\Omega_\delta} |\nabla v|^2 \dx 
 + \int_{\mho_\delta} |\nabla u_i|^2 \dx 
 + \int_{I_\delta} |\nabla w_{\delta,i}|^2 \dx\\
 & = \int_{\Omega_\delta} |\nabla v|^2 \dx 
 + \int_{\mho_\delta} |\nabla u_i|^2 \dx 
 + \int_{I_\delta} |\nabla \tilde w_{\delta,i}|^2 \dx.
\end{split}
 \end{equation}
We observe that
\[
 \int_{\mho_\delta} |\nabla u_i|^2 \dx \leq \int_{B_{3/4}^+(0) \setminus B_{1/2}^+(0)} |\nabla u_i|^2 \dx + \int_{T_{1/2} \times (0,\delta)} |\nabla u_i|^2 \dx,
\]
%and
%\[
% \int_{\Omega_\delta} |\nabla v|^2 \dx \leq \int_{B_{1/2}^+(0)} |\nabla v|^2 \dx.
%\]
thus, after enlarging $\Omega_\delta$ to $\B_{1/2}^+$, \eqref{eq:sc:1} becomes
\begin{equation}\label{eq:sc:2}
\begin{split}
 \int_{B_{1/2}^+(0)} |\nabla u_i|^2 \dx 
 \leq \int_{B_{1/2}^+(0)} |\nabla v|^2 \dx 
 + \int_{T_{1/2}\times(0,\delta)} |\nabla u_i|^2 \dx 
 + \int_{I_\delta} |\nabla \tilde w_{\delta,i}|^2 \dx.
\end{split}
\end{equation} 
Moreover, by \eqref{eq:nv1} and \eqref{eq:vdeltait}
\[
\begin{split}
 \int_{I_\delta} |\nabla \tilde w_{\delta,i}|^2 \dx  
 &\aleq \int_{I_\delta} |\nabla \tilde{v}_{\delta,i}|^2 \dx\\
 &\aleq  \int_{I_\delta} |\nabla \tilde u_i|^2 \dx + \int_{I_\delta} |\nabla \tilde v|^2 \dx+ \frac{1}{\delta^2} \int_{I_\delta} |\tilde u_i-\tilde v|^2 \dx\\
 &\aleq  \int_{I_\delta} |\nabla u_i|^2 \dx + \int_{I_\delta} |\nabla v|^2 \dx + \frac{1}{\delta^2} \int_{I_\delta} |\tilde u-\tilde v|^2 \dx + \frac{1}{\delta^2} \int_{I_\delta} |\tilde u_i- \tilde u|^2 \dx,
\end{split}
 \]
 where $\tilde u = \pi \circ \tilde u$ and $\tilde u \in W^{1,2}(B_{3/4}^+(0),\widetilde \n)$.
 
Observe that $\tilde u \equiv \tilde v$ outside $\B_{1/2}^+$ and on $T_{3/4}$, in consequence 
\[
\begin{split}
 \frac{1}{\delta^2} \int_{I_\delta} |\tilde u- \tilde v|^2 \dx 
 &\leq \frac{1}{\delta^2} \int_{T_{3/4} \times (0,2\delta)} |\tilde u- \tilde v|^2 \dx
 \aleq \int_{T_{3/4} \times (0,2\delta)} |\nabla (\tilde u-\tilde v)|^2 \dx\\
 &\aleq \int_{T_{3/4} \times (0,2\delta)} |\nabla (u-v)|^2 \dx
\end{split}
 \]
by an application of Lemma~\ref{la:1dpoinc}. Moreover, higher integrability of $u_i$ \eqref{eq:bsc:higherintegr} allows us to use H\"older's inequality and estimate 
\[
 \int_{T_{1/2}\times(0,\delta)} |\nabla u_i|^2 \dx + \int_{I_\delta} |\nabla u_i|^2 \dx
 \aleq \left |T_{1/2} \times (0,\delta) \right |^{1-\frac{2}{q}} + |I_\delta|^{1-\frac{2}{q}} 
 \aleq \delta^{1 - \frac{2}{q}}
\]
with some positive exponent $1 - \frac{2}{q} > 0$. 

Of course, we also have
\[
 \frac{1}{\delta^2} \int_{I_\delta} |\tilde u_i- \tilde u|^2 \dx \aleq \frac{1}{\delta^2} \int_{I_\delta} |u_i- u|^2 \dx. 
\]
Thus, we arrive at
\begin{equation}\label{eq:sc:3}
\begin{split}
 \int_{B_{1/2}^+(0)} |\nabla u_i|^2 \dx \leq 
 & \int_{B_{1/2}^+(0)} |\nabla v|^2 \dx + C \delta^{1 - \frac{2}{q}} + C\int_{I_\delta} |\nabla v|^2 \dx\\
 &+ C \int_{T_{3/4} \times (0,2\delta)} |\nabla (u-v)|^2 \dx +\frac{C}{\delta^2} \int_{I_\delta} |u_i-u|^2 \dx .
\end{split}
\end{equation}
This estimate holds for all $i$ and $\delta$, so may take the limit superior $i \to \infty$. Recalling strong convergence $u_i \to u$ in $L^2(\B_{3/4}^+(0))$, we obtain 
\begin{equation}\label{eq:sc:4}
\begin{split}
 \limsup_{i \to \infty} \int_{B_{1/2}^+(0)} |\nabla u_i|^2 \dx \leq
 & \int_{B_{1/2}^+(0)} |\nabla v|^2 \dx + C \delta^{1-\frac{2}{q}} \\
 & + C \int_{I_\delta} |\nabla v|^2 \dx + C \int_{T_{3/4} \times (0,2\delta)} |\nabla (u-v)|^2 \dx. 
\end{split}
\end{equation}
In the limit $\delta \to 0$, the last two integrals vanish by absolute continuity of the integral, and hence we arrive at the estimate 
\begin{equation}\label{eq:sc:HUNGRY}
% \int_{B_{1/2}^+(0)} |\nabla u|^2 \dx  \leq 
 \limsup_{i \to \infty} \int_{B_{1/2}^+(0)} |\nabla u_i|^2 
 \dx \leq \int_{B_{1/2}^+(0)} |\nabla v|^2 \dx.
\end{equation}

Employing weak lower semicontinuity of the Dirichlet energy, we can now easily conclude both strong convergence of $u_i$ and minimality of $u$. Indeed, after integrating the identity 
\[
|\nabla u_i - \nabla u|^2 = |\nabla u_i|^2 + |\nabla u|^2 - 2 \nabla u_i \cdot \nabla u
\]
over $\B_{1/2}^+(0)$ and taking the limit superior on both sides, we obtain 
\[
\limsup_{i \to \infty} \int_{\B_{1/2}^+(0)} |\nabla u_i - \nabla u|^2 \dd x 
= \limsup_{i \to \infty} \int_{\B_{1/2}^+(0)} |\nabla u_i|^2 - |\nabla u|^2 \dd x 
\le 0
\]
due to the weak convergence $\nabla u_i \rightharpoonup \nabla u$ in $L^2(\B_{1/2}^+(0))$ and the estimate \eqref{eq:sc:HUNGRY} applied with $v \equiv u$. This shows that $\nabla u_i \to \nabla u$ strongly in $L^2(\B_{1/2}^+(0))$. But now the left-hand side of \eqref{eq:sc:HUNGRY} is just the energy of $u$, and the minimality of $u$ follows. 

\underline{\textsc{Proof of \eqref{it:strongconvergencegeneral}:}}
Let $a\in \partial \Omega$ be any point on the boundary. 
%As in \Cref{sub:str8} we may assume that $a=0$, let $\varsigma\colon B_R(0)\cap \Omega \to \varsigma(B_R(0)\cap \Omega)$ be the $C^1$ diffeomorphism that straightens out the boundary, i.e, $\varsigma(B_R(0)\cap \partial \Omega)\subset \R^{n-1}\times \{0\}$. 

Exactly as in proof of \eqref{it:stongconvergenceboundary}, we find that for an $R>0$ (up to a subsequence) $u_i$ converges weakly to $u$ in $W^{1,2}(B_{R}(a)\cap \Omega)$, strongly in $L^2(B_{R}(a)\cap \Omega)$, and $a.e.$. We will show that the convergence is in fact strong and that $u\in W^{1,2}(B_{R/2}(a),\n)$ is a minimizing harmonic map. To do so we proceed exactly as in proof of \eqref{it:stongconvergenceboundary}, the only difference is that we have to redefine the sets in \eqref{eq:Omegadeltadefinition} in terms of the distance to the boundary:
\begin{equation}\label{eq:Omegadeltanonflat}
\begin{split}
 \Omega_\delta &\coloneqq (B_{R/2}^+(a)\cap\Omega) \setminus \{x\in\Omega \colon \dist(x,\partial \Omega)<2\delta\};\\
 I_\delta &\coloneqq B^+_{R/2+\delta}(a) \setminus \brac{\Omega_\delta \cup \{x\in\Omega \colon \dist(x,\partial \Omega)<\delta\} };\\
 \mho_\delta &\coloneqq B_{3R/4}^+(a) \setminus \brac{I_\delta \cup \Omega_\delta}.
\end{split} 
\end{equation}

Since $\partial \Omega$ is compact, a covering argument leads to the conclusion. 
\end{proof}

\subsection{Strong convergence with variable boundary}\label{ss:convergencewithvariableboundry}

A technical modification of this reasoning allows us to consider in Theorem~\ref{th:bsc} a sequence of maps $u_i$ defined on converging Lipschitz domains with non-flat boundaries. This will be used in \Cref{th:r4s} and \Cref{th:hot-spots}. 

\begin{proposition}\label{pr:strongconvergenceconvergingboundary}
Let $\n$ be a manifold with finite fundamental group and 
\[
 \Omega_i \coloneqq \{x\in B_R\colon x_n>\alpha_i(x')\},
\]
where the sequence of functions $\alpha_i \in C^1(\R^{n-1},\R)$ converges to zero in $C^1$. Assume that $u_i\in W^{1,2}(\Omega_i, \n)$ is a sequence of minimizing harmonic maps with boundary maps $\vp_i \coloneqq u_i \big|_{\partial \Omega_i \cap B_R}$ satisfying the uniform bound
\[
 \sup_{i \in \N} [\varphi_i ]_{W^{s,p}(\partial \Omega_i \cap B_R)}  < \infty 
 \quad \text{for some } s < \frac12, \ p>1,\ sp>1.
\]
Let us choose $C^1$-diffeomorphisms $\varsigma_i \colon B_R \to B_R$ that map $\Omega_i$ into the half-ball $B_R^+$ and converge to identity in $C^1$.

Then, up to taking a subsequence $i \to \infty$, $u_i\circ \dif_i^{-1}$ converges strongly in $W^{1,2}(B_{r}^+,\n)$ (for each $r<R$) to a map $u \colon \B^+_R \to \n$ which is minimizing in each ball $B_r^+$. In consequence, the traces also converge: $\vp_i \circ \dif_i^{-1} \to u \big|_{T_r}$ in $W^{\frac12,2}(T_r,\cN)$. 
\end{proposition}

The proof will be based on the following lemma. 
\begin{lemma}
\label{lem:continuity-of-shifts}
Let $B \subset \R^n$ and $\dif_i \colon B \to B$ be a sequence of diffeomorphisms convergent to identity in $C^1$. If $u \in L^2(B)$, then $u \circ \dif_i \to u$ in $L^2(B)$. 
\end{lemma}

\begin{proof}
Let us fix $\eps > 0$. One can then choose $v \in C_c^\infty(B)$ such that
\begin{equation}\label{eq:easy0}
\| u-v \|_{L^2(B)} \le \eps. 
\end{equation}
Since $\dif_i$ is $C^1$-close to identity, we also have for large enough $i$ 
\begin{equation}\label{eq:easy1}
\| u \circ \dif_i - v \circ \dif_i \|_{L^2(B)} 
= \| (u - v) \circ \dif_i \|_{L^2(B)} 
\le 2 \eps.
\end{equation}
Moreover, since $v \in C^1$, the difference $v \circ \dif_i - v$ converges uniformly to $0$, in consequence for large enough $i$
\begin{equation}\label{eq:easy2}
\| v \circ \dif_i - v \|_{L^2(B)} \le \eps. 
\end{equation}
Thus, by triangle inequality and \eqref{eq:easy0}, \eqref{eq:easy1}, and \eqref{eq:easy2} we obtain 
\begin{equation}\label{eq:easy3}
\begin{split}
\| u - u \circ \dif_i \|_{L^2(B)} &\le \|u-v\|_{L^2(B)} + \| v- v\circ \varsigma_i\|_{L^2(B)}  + \|v\circ \sigma_i - u\circ \varsigma_i\|_{L^2(B)}\\
&\le 4\eps.
\end{split}
\end{equation}
\end{proof}

\begin{proof}[Proof of \Cref{pr:strongconvergenceconvergingboundary}]
\hfill

\textsc{Step 1.} Repeating the reasoning from the proof of \Cref{th:bsc}~\eqref{it:stongconvergenceboundary}, we are able to establish the following convergence for each $r < R$: 
\begin{equation}\label{eq:firstconvergence}
\begin{array}{rll}
u_i \chi_{\Omega_i \cap B_r} & \xrightarrow{i\to\infty} u \chi_{B_r^+} &\text{ strongly in } L^2(B_r); \\
\nabla u_i \chi_{\Omega_i \cap B_r} & \xrightarrow{i\to\infty} \nabla u \chi_{B_r^+} &\text{ strongly in } L^2(B_r).
\end{array}
\end{equation}
%It is worth noting that only $\alpha_i \rightrightarrows 0$ is required to obtain \eqref{eq:firstconvergence}. 
Unfortunately, this does not suffice to conclude convergence of boundary maps $\vp_i$. 

Using \Cref{lem:continuity-of-shifts} we will upgrade the convergence to $u_i\circ \dif_i^{-1} \to u$ strongly in $W^{1,2}(B_r^+)$.

\textsc{Step 2.} Strong convergence of $u_i\circ \dif_i^{-1}$ in $L^2(B_r^+)$.

Recall that $\varsigma_i, \varsigma_i^{-1} \to \id$ on $B_r$. Since $\dif_i^{-1}\colon B_r^+ \to \Omega_i\subset B_R^+$, we have by triangle inequality  
\begin{equation}\label{eq:decompositionuiciecsigmai}
\| u_i \circ \dif_i^{-1} - u \|_{L^2(B_r^+)} 
\le \| u \circ \dif_i^{-1} - u \|_{L^2(B_r^+)}
+ \| u_i \circ \dif_i^{-1} - u \circ \dif_i^{-1} \|_{L^2(B_r^+)}. 
\end{equation}
By \Cref{lem:continuity-of-shifts} (with $\Omega = B_R^+$ and $\Omega'=B_r^+$) we obtain  $\lim_{i\to \infty}\| u \circ \dif_i^{-1} - u \|_{L^2(B_r^+)}=0$. As for the second term of \eqref{eq:decompositionuiciecsigmai} we have from Step 1.
\[
\lim_{i\to\infty}\| u_i \circ \dif_i^{-1} - u \circ \dif_i^{-1} \|_{L^2(B_r^+)} \aleq  \lim_{i\to\infty} \| u_i \chi_{\Omega_i\cap B_r} - u \chi_{B_r^+}\|_{L^2(B_r^+)} = 0. 
\]

\textsc{Step 3.} Strong convergence of the gradients. 
We first note that 
\[
\nabla (u_i\circ \dif_i^{-1}) = \left( (\nabla u_i) \circ \dif_i^{-1} \right) \cdot \nabla (\dif_i^{-1}),
\]
here by $\cdot$ we mean the multiplication of the Jacobian matrices.
Since $\dif_i^{-1} \to \id$ in $C^1$, we have the uniform convergence $I - \nabla \dif_i^{-1} \to 0$, where $I\in M^{n\times n}$ is the identity matrix. This gives
\begin{equation}\label{eq:blablablablablablabla}
 \begin{split}
\| \nabla (u_i\circ \dif_i^{-1}) - (\nabla u_i) \circ \dif_i^{-1} \|_{L^2(B_r^+)} 
& \le \| \left( (\nabla u_i) \circ \dif_i^{-1} \right) \cdot \left( I - \nabla (\dif_i^{-1}) \right) \|_{L^2(B_r^+)} \\
& \lesssim \| \nabla u_i \|_{L^2(B_r^+)} \cdot \| I - \nabla (\dif_i^{-1}) \|_{L^\infty(B_r^+)}\xrightarrow{i\to\infty}0 .  
 \end{split}
\end{equation}
Thus,
\begin{equation}\label{eq:currentlylisteningtothat'swhatsup}
 \begin{split}
\|&\nabla (u_i\circ \dif_i^{-1}) - \nabla u\|_{L^2(B_r^+)} \\
&\le \| \nabla (u_i\circ \dif_i^{-1}) - (\nabla u_i) \circ \dif_i^{-1} \|_{L^2(B_r^+)} + \|(\nabla u_i) \circ \dif_i^{-1} -\nabla u\|_{L^2(B_r^+)}.   
 \end{split}
\end{equation}
The convergence of the first term of the left-hand side of \eqref{eq:currentlylisteningtothat'swhatsup} follows from \eqref{eq:blablablablablablabla}. In order to obtain the convergence of the second term, $\|(\nabla u_i) \circ \dif_i^{-1} -\nabla u\|_{L^2(B_r^+)}$ we proceed exactly as in Step 2. This finishes the proof.
\end{proof}

\section{Boundary regularity for smooth and singular boundary data in \texorpdfstring{$W^{1,n-1}$}{W(1,n-1)}}\label{sec:boundaryregularity}
It is a classical result by Schoen and Uhlenbeck that minimizing  harmonic maps with $C^{2,\alpha}$ boundary data are $C^{2,\alpha}$ in a neighborhood of the boundary \cite{SU2}.

One of the quite surprising results of Almgren and Lieb in \cite{AlmgrenLieb1988} is that even possibly singular boundary data (they consider $W^{1,2}(\partial \B^3_1(0),\S^2)$) prevents singularities from reaching the boundary. 

In this section we extend this result to larger trace space and general dimension.

First of all we notice the interior regularity, which is a corollary of the compactness result, Theorem \ref{th:bsc}~\eqref{it:strongconvergencegeneral}, and Theorem \ref{th:ALs2s}.
\begin{theorem}[interior regularity for almost constant boundary data]
\label{th:int-regularity-in-terms-of-bdry}
Assume $\pi_1(\n)$ is finite, $s>\frac12$, and $sp>1$. For each bounded domain with $C^1$-boundary $\Omega \subset \R^n$ and each $\sigma > 0$ there exist an $\eps= \eps(\Omega,\sigma) > 0$ such that the following holds: If $u \in W^{1,2}(\Omega,\n)$ is a minimizing harmonic map with trace $\varphi \coloneqq u \Big |_{\partial \Omega}$ and assume that for $s>\frac12$, $p>1$, and $sp> 1$ we have
\[
[\varphi]^p_{W^{s,p}(\partial \Omega)}\le \eps,
\]
then $u$ is smooth in the interior region $\{ x \in \Omega \colon \dist(x, \partial \Omega) > \sigma \}$. 
\end{theorem}

\begin{proof}
Assume on the contrary that there exists a $\sigma>0$ and sequence of minimizing maps $u_i\in W^{1,2}(\Omega,\n)$ with $u_i\big\rvert_{\partial\Omega} = \varphi_i$ and 
\[
 [\varphi_i]^p_{W^{s,p}(\partial \Omega)} \le \frac{1}{i}
\]
such that each $u_i$ has a singular point $y_i\in \{x\in\Omega \colon \dist(x,\partial\Omega)>\sigma \}$.

Then by the strong convergence of minimizers, Theorem \ref{th:bsc} we would obtain the existence of a minimizing harmonic map $u\in W^{1,2}(\Omega,\n)$ such that, up to a subsequence, $u_i\rightarrow u$ strongly in $W^{1,2}(\Omega,\n)$ with $u\big\rvert_{\partial\Omega} = const$. Thus $u$ itself would be a constant map and therefore have no singularities. On the other hand, from the sequence $y_i$ of singular points of $u_i$ we could choose a subsequence converging to a point $y\in \{x\in \Omega\colon \dist(x,\partial \Omega)\ge\sigma\}\subset\Omega$ and from Theorem \ref{th:ALs2s} (Singular points converge to singular points) we would know that $y$ must be a singular point of the limiting map $u$, which gives a contradiction. 
\end{proof}

\subsection{Uniform boundary regularity for constant boundary data}
The first step is uniform boundary regularity for constant boundary data, see \cite[Theorem 1.10]{AlmgrenLieb1988}. 
\begin{theorem}[Boundary regularity]\label{th:boundaryregularity-constant}
Assume $\pi_1(\n)$ is finite. There exists a uniform constant $\lambda > 0$ such that the following holds:
Let $u\in W^{1,2}(\B^+_1(0),\n)$ be a minimizer. Moreover, assume that $\varphi = u\Big |_{T_1}$ is constant.
Then $u$ is smooth in
\[
 (0,\lambda) \times T_{1/2}.
 \]
\end{theorem}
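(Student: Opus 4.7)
The plan is to argue by compactness and contradiction, combining three ingredients: the uniform energy bound from Theorem~\ref{th:boundaryboundofenergy} (which, for constant boundary data, is independent of the minimizer), the strong convergence at the boundary from Theorem~\ref{th:bsc}, and the fact that boundary tangent maps must be constant whenever the trace is, via \cite[Theorem~5.7]{HL1987} as in the proof of Lemma~\ref{la:bdtangentmaps}.

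Suppose the conclusion fails. Then for each integer $k\ge 1$ there exist a minimizer $u_k\in W^{1,2}(B^+_1,\S^2)$ with constant trace $c_k\in\S^2$ on $T_1$ and a non-analytic point $z_k\in[0,1/k]\times B^2_{1/2}$. Since the Gagliardo seminorm of a constant vanishes, Theorem~\ref{th:boundaryboundofenergy} gives a uniform bound $\int_{B^+_{3/4}}|\nabla u_k|^2\dx\le C$. Extracting subsequences, $c_k\to c_\infty\in\S^2$, $z_k\to z_\infty\in \overline{T_{1/2}}$, and by Theorem~\ref{th:bsc} (the convergence $c_k\to c_\infty$ holds in every $W^{s,p}$, as only the $L^p$-part of the norm contributes) $u_k\to u_\infty$ strongly in $W^{1,2}(B^+_r,\S^2)$ for every $r<1$, where $u_\infty$ is a minimizer with constant trace $c_\infty$ on $T_1$.

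The core step is to show that at every $y\in T_{3/4}$ the boundary energy density of $u_\infty$ vanishes:
\[
\lim_{r\to 0} r^{-1}\int_{B^+_r(y)}|\nabla u_\infty|^2\dx = 0.
\]
For any blow-up sequence $\lambda_i\searrow 0$, a further application of Theorem~\ref{th:bsc} (with the unchanged constant trace $c_\infty$) produces a strong limit $\tau\colon\R^3_+\to\S^2$ of $(u_\infty)_{y,\lambda_i}$ that is a minimizing tangent map with trace $c_\infty$ on $\{x_3=0\}$. Boundary monotonicity (Theorem~\ref{th:bdmonotonicityformula}) makes $\tau$ $0$-homogeneous and \cite[Theorem~5.7]{HL1987} then forces $\tau\equiv c_\infty$; this is a continuity-free variant of Lemma~\ref{la:bdtangentmaps}, valid because the constant trace $c_\infty$ passes directly into the tangent map. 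Combined with the boundary monotonicity formula and compactness of $\overline{T_{3/4}}$, this yields a single scale $r_0>0$ for which the rescaled energy $r^{-1}\int_{B^+_r(y)}|\nabla u_\infty|^2\dx$ stays below the boundary $\eps$-regularity threshold for all $y\in\overline{T_{3/4}}$ and all $r\le r_0$.

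Applying the boundary $\eps$-regularity theorem for smooth (and in particular constant) boundary data from Schoen--Uhlenbeck \cite{SU2}, one concludes that $u_\infty$ is analytic on the half-cylinder $[0,r_0/2]\times B^2_{3/4}$. Strong $W^{1,2}$-convergence on $B^+_{3/4}$ transfers the smallness of the rescaled energy to $u_k$ uniformly in $y\in\overline{T_{1/2}}$ for all large $k$, and the same boundary $\eps$-regularity gives analyticity of $u_k$ on $[0,r_0/2]\times B^2_{1/2}$. Choosing $k$ with $1/k<r_0/2$ places $z_k$ in this analytic region, contradicting its non-analyticity. The main obstacle I anticipate is the continuity-free variant of Lemma~\ref{la:bdtangentmaps}; it is precisely the constant-trace hypothesis that removes the need for continuity of $u_\infty$ at the boundary, since the constant trace $c_\infty$ is inherited by every tangent map regardless of the fine behavior of $u_\infty$ near $T_1$.
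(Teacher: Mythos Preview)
Your argument is correct and rests on the same core fact as the paper (boundary tangent maps with constant trace are constant, via \cite[Theorem~5.7]{HL1987}), but it is organized differently. The paper first isolates a separate lemma (Lemma~\ref{la:uniformsmallnessboundaryconstant}): for every $\eps>0$ there is a single scale $R_0(\eps)$ such that \emph{every} minimizer with constant trace on $T_1$ satisfies $r^{-1}\int_{B_r^+(x_0)}|\nabla u|^2<\eps$ for all $x_0\in T_{1/2}$ and $r<R_0(\eps)$. Its proof is the compactness/contradiction argument you call your ``core step'', run once and for all. The theorem then follows immediately by applying the \emph{interior} $\eps$-regularity theorem (Theorem~\ref{th:epsreg2}) at each point $x_0$ just above $T_{1/2}$, via $B_\rho(x_0)\subset B^+_{2\rho}(x_1)$ with $x_1$ the projection of $x_0$ onto $T_{1/2}$; no limit map and no transfer-back step are needed.

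You instead run the contradiction at the level of the theorem, produce a limit minimizer $u_\infty$, establish uniform smallness for $u_\infty$, and then transfer this back to $u_k$ via strong convergence at the fixed scale $r_0$ together with boundary monotonicity for smaller scales. This extra round-trip is valid but unnecessary once one has the paper's lemma. Two points in your favor: your use of the boundary $\eps$-regularity theorem from \cite{SU2} gives analyticity up to $T_{1/2}$ directly, where the paper appeals to unique analytic continuation; and your remark that the continuity hypothesis of Lemma~\ref{la:bdtangentmaps} is not needed for constant trace is well taken --- the paper's own proof of Lemma~\ref{la:uniformsmallnessboundaryconstant} relies on the same observation without making it explicit.
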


The main ingredient in Theorem~\ref{th:boundaryregularity-constant} is the following.
\begin{lemma}\label{la:uniformsmallnessboundaryconstant}
For any $\eps > 0$ there is a uniform constant $R_0(\eps) \in (0,\frac{1}{2})$ so that the following holds.
Let $u\in W^{1,2}(\B^+_1(0),\n)$ be a minimizer, where $\cN$ has finite fundamental group, and assume that $\varphi = u\Big |_{T_1}$ is constant.
Then for any $x_0 \in T_{1/2}$
\[
\sup_{r < R_0(\eps)} r^{2-n} \int_{B_r^+(x_0)} |\nabla u|^2 \dx < \eps.
 \]
\end{lemma}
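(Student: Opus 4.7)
The plan is to argue by contradiction via a blow-up/blow-down scheme. If the conclusion fails then there exist $\varepsilon > 0$, minimizers $u_i \in W^{1,2}(B^+,\S^2)$ with constant trace $u_i|_{T_1} \equiv c_i \in \S^2$, points $x_i \in T_{1/2}$, and radii $r_i \searrow 0$ such that $r_i^{-1}\int_{B^+_{r_i}(x_i)}|\nabla u_i|^2\,dx \ge \varepsilon$. Passing to a subsequence with $c_i \to c \in \S^2$, I rescale $\tilde u_i(x) := u_i(x_i + r_i x)$ to get minimizers on half-balls $B^+_{R_i}$ with $R_i \to \infty$, still with constant trace $c_i$ on $T_{R_i}$ and with $\int_{B_1^+}|\nabla \tilde u_i|^2\,dx \ge \varepsilon$.

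Theorem~\ref{th:boundaryboundofenergy} with constant trace yields a uniform $W^{1,2}(B^+_\rho)$-bound on $\tilde u_i$ for every fixed $\rho > 0$, and the hypothesis of Theorem~\ref{th:bsc} is trivially satisfied (the traces are literally constant and $c_i \to c$), so a diagonal extraction produces a limit $u_\infty \in W^{1,2}_{\loc}(\R^3_+,\S^2)$ which is minimizing on every half-ball, has constant trace $c$ on $\{x_3=0\}$, and satisfies $\int_{B_1^+}|\nabla u_\infty|^2\,dx \ge \varepsilon$, so $u_\infty$ is non-constant.

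Next I blow down at infinity: the rescalings $v_R(x) := u_\infty(Rx)$ are again minimizers on $\R^3_+$ with constant trace $c$, so by the same compactness a subsequence converges strongly in $W^{1,2}_{\loc}(\R^3_+,\S^2)$ to a minimizer $v_\infty$ with constant trace $c$. By boundary monotonicity (Theorem~\ref{th:bdmonotonicityformula}) the quantity $r \mapsto r^{-1}\int_{B_r^+(0)}|\nabla u_\infty|^2\,dx$ is nondecreasing and, by the uniform bound from Theorem~\ref{th:boundaryboundofenergy}, also bounded, hence has a finite limit $\Theta_\infty$ as $r \to \infty$. Rescaling gives $\sigma^{-1}\int_{B_\sigma^+}|\nabla v_R|^2\,dx = (\sigma R)^{-1}\int_{B_{\sigma R}^+}|\nabla u_\infty|^2\,dx \to \Theta_\infty$, so by strong convergence $\sigma^{-1}\int_{B_\sigma^+}|\nabla v_\infty|^2\,dx = \Theta_\infty$ for every $\sigma > 0$. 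The equality case in Theorem~\ref{th:bdmonotonicityformula} then forces $\partial v_\infty/\partial \nu \equiv 0$, so $v_\infty$ is $0$-homogeneous.

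Since $v_\infty$ is a $0$-homogeneous minimizer from $\R^3_+$ into $\S^2$ whose trace on $\{x_3=0\}$ is the constant $c$, the rigidity result \cite[Theorem 5.7]{HL1987} (already invoked in the proof of Lemma~\ref{la:bdtangentmaps}) forces $v_\infty \equiv c$; in particular $\Theta_\infty = 0$. Monotonicity of $r^{-1}\int_{B_r^+(0)}|\nabla u_\infty|^2\,dx$ then forces this quantity to vanish for every $r$, contradicting $\int_{B_1^+}|\nabla u_\infty|^2\,dx \ge \varepsilon$. I expect the main technical point to be the twofold use of Theorem~\ref{th:bsc} in the half-space limit (blow-up, then blow-down), which requires combining the uniform $W^{1,2}$-bounds from Theorem~\ref{th:boundaryboundofenergy} with a diagonal extraction over an exhaustion of $\R^3_+$ by half-balls; after that, the identification of the blow-down limit as a $0$-homogeneous map and the final application of the Hardt--Lin rigidity are immediate.
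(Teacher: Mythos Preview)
Your argument is correct, but it takes a different and somewhat longer route than the paper's. The paper does \emph{not} rescale the sequence $u_i$; instead it uses boundary monotonicity (Theorem~\ref{th:bdmonotonicityformula}) to propagate the lower bound $R_i^{-1}\int_{B_{R_i}^+(x_i)}|\nabla u_i|^2\ge\varepsilon$ to all larger radii, passes to a limit $u$ on the \emph{fixed} domain $B_1^+$ via Theorem~\ref{th:bsc}, obtains $\inf_{r>0} r^{-1}\int_{B_r^+(x_0)}|\nabla u|^2\ge\varepsilon/2$ at a cluster point $x_0\in T_{1/2}$, and then directly invokes Lemma~\ref{la:bdtangentmaps} (boundary tangent maps are constant) to contradict this. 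In effect the paper does ``limit, then blow up the limit'', while you do ``blow up the sequence, take a limit on $\R^3_+$, then blow down''. Your approach amounts to proving a boundary Liouville theorem (a minimizer on all of $\R^3_+$ with constant trace and uniformly bounded scaled energy must be constant), paralleling Lemma~\ref{lem:liouville}; this is a nice self-contained statement, but it duplicates the content of Lemma~\ref{la:bdtangentmaps} in a global setting and requires two compactness/diagonal extractions instead of one. Both approaches ultimately rest on the Hardt--Lin rigidity \cite[Theorem~5.7]{HL1987} and the equality case in boundary monotonicity, so neither is more elementary; the paper's version is simply shorter because Lemma~\ref{la:bdtangentmaps} is already available.
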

\begin{proof}
Assume that the claim is false for some $\eps > 0$, then we find a sequence $R_i \to 0$, $x_i \in T_{\frac{1}{2}}$, and a sequence of minimizers $u_i$ with boundary data $u_i\big\rvert_{T_1} = \varphi_i \in \n$ constant on the flat part of the boundary, such that 
\begin{equation*}
 R_i^{2-n} \int_{B_{R_i}^+(x_i)} |\nabla u_i|^2 \dx \geq \eps.
\end{equation*}
By the boundary monotonicity formula, Theorem~\ref{th:bdmonotonicityformula}, 
\[
 \inf_{r \geq R_i} r^{2-n} \int_{B_{r}^+(x_i)} |\nabla u_i|^2 \dx \geq \eps.
\]
By Theorem \ref{th:uniformboundedness} \eqref{it:uniformboundboundary} we know that maps $u_i$ are uniformly bounded and thus by strong convergence of minimizing harmonic maps, Theorem~\ref{th:bsc}~\eqref{it:stongconvergenceboundary}, up to taking a subsequence, we find in the limit a minimizing harmonic map $u \in W^{1,2}(B_1^+(0),\n)$ and a limit point $x_0 \in T_{\frac{1}{2}}$ of $\{x_i\}_{i=1}^\infty$ such that 
\begin{equation}\label{eq:uniformconst}
 \inf_{r \geq 0} r^{2-n} \int_{B_{r}^+(x_0)} |\nabla u|^2 \dx \geq \frac{\eps}{2}.
\end{equation}
Now take any sequence $r_i \to 0$ such that, by \Cref{la:boundarytangentmaps}, we have
\[
 u_{r_i,x_0}(\cdot)\coloneqq u(x_0+r_i\,\cdot) \xrightarrow{i \to \infty} \Phi(\cdot) \quad \text{in } W^{1,2}(B_1^+(0)).
\]
From \Cref{la:bdtangentmaps} we find that $\Phi$ is constant. Thus, for small enough $r > 0$ we have 
\[
r^{2-n}\int_{B_{r}^+(x_0)} |\nabla u|^2 \dx = \int_{B_1^+(0)} |\nabla (u(x_0 + rx))|^2 \dx  < \frac{\eps}{2},
\]
contradicting \eqref{eq:uniformconst}.
\end{proof}

\begin{proof}[Proof of Theorem~\ref{th:boundaryregularity-constant}]
For any given $\eps > 0$ let $R_0(\eps)$ be the radius from Lemma~\ref{la:uniformsmallnessboundaryconstant} and set $\lambda \coloneqq R_0(\eps) /2$. For $x_0 \in (0,\lambda) \times \B_{\frac{1}{2}}(0)$ denote by $x_1 \in T_{1/2}$ the projection of $x_0$ onto $T_{1/2}$. Then for $\rho \coloneqq |x_0-x_1| < \lambda$ we have 
\[
 \int_{B_\rho(x_0)} |\nabla u|^2 \dx 
 \le \int_{B_{2\rho}(x_1)} |\nabla u|^2 \dx 
 \leq \eps (2 \rho)^{n-2} 
% \rho^{2-n} \int_{B_\rho(x_0)} |\nabla u|^2 \dx \aleq \brac{2\rho}^{2-n} \int_{B_{2\rho}(x_1)} |\nabla u|^2 \dx \leq \eps.
\]
due to Lemma~\ref{la:uniformsmallnessboundaryconstant}. Choosing $\eps > 0$ small, we infer smoothness of $u$ in $\B_{\rho/2}(x_0)$ from Theorem \ref{th:epsreg2} ($\eps$-regularity). Now $u$ is regular in $(0,\lambda) \times T_{1/2}$, since this region is covered by balls of this type. 
\end{proof} 
\subsection{Uniform boundary regularity for singular boundary data}

\begin{theorem}[Uniform boundary regularity for singular boundary data]\label{th:r4s}
Assume $s>\frac12$, $p>1$, and $sp> 1$. Let $\Omega \subset \R^n$ be a bounded domain with a $C^1$-boundary and let $\n$ be a~manifold with finite fundamental group. There are constants $R = R(\Omega)$ and $\eps = \eps(\Omega)$ such that the following holds: 

Take any minimizing harmonic map $u\in W^{1,2}(\Omega,\n)$ and denote its trace on $\partial \Omega$ by $\varphi$.

If for some $x_0 \in \partial \Omega$ and some $\rho_0 < R$ we have the estimate
\begin{equation}\label{eq:singularbryregsmallnesscondition}
\Lambda \coloneqq \sup_{B_\rho(y)\subset B_{\rho_0}(x_0)} \rho^{sp-(n-1)}[\varphi]^p_{W^{s,p}(B_\rho(y)\cap \partial \Omega)} \leq \eps
\end{equation}
then $u$ is smooth in $B_{\lambda\rho_0}(x_0) \cap \Omega$, where $\lambda$ is a uniform constant. 
\end{theorem}

We first prove Theorem \ref{th:r4s} for flat boundary.

\begin{proposition}\label{th:rs:flat}
There exist uniform constants $R$ and $\eps$ such that the following holds.
Take any minimizing harmonic map $u\colon B^+_1(0) \to \n$ with $\pi_1(\n)$ finite and denote the trace of $u$ on $T_1$ by $\varphi$. Let also  $s>\frac12$, $p>1$, $sp>1$. 

If for some $\rho_0 < R$ we have the estimate
\begin{equation}\label{eq:smallnessoffractionalnorm}
\rho_0^{sp-(n-1)}[\varphi ]_{W^{s,p}(T_{\rho_0})}^p \leq \eps
\end{equation}
then $u$ is smooth in 
\[
B_{\lambda \rho_0}(0) \cap \{x_n \geq \lambda\, \rho_0 /2\},
\]
where $\lambda$ is taken from Theorem~\ref{th:boundaryregularity-constant}.
\end{proposition}

\begin{remark}
 In particular one can take $\int_{T_{\rho_0}} |\nabla \varphi|^{n-1} \dd \mathcal{H}^{n-1} \leq \eps$ as the smallness condition in \eqref{eq:smallnessoffractionalnorm}.
\end{remark}

\begin{proof}[Proof of \Cref{th:rs:flat}]
Assume the claim is false. Then we find a sequence $\rho_k \to 0$, a sequence of minimizing harmonic maps $u_k \in W^{1,2}(B_1^+(0),\n)$ with trace $\varphi_k = u_k \Big |_{T_1}$ satisfying
\[
\rho_k^{sp-(n-1)}[\varphi_k ]_{W^{s,p}(T_{\rho_k})}^p \leq \frac{1}{k}
\]
however there is a singularity 
\[
y_k \in B_{\rho_k \lambda }(0) \cap \{(x_1, \dots, x_n) \in \R^n\colon x_n \ge \rho_k \lambda /2\}.
\]
We rescale, setting $v_k(x) \coloneqq u_k(\rho_k\, x)$, $\psi_k(x) \coloneqq \varphi_k(\rho_k\, x)$, and find $v_k \in W^{1,2}(B_1^+(0),\n)$, which is a minimizing harmonic map with trace $\psi_k$ on $T_1$ satisfying
\begin{equation}\label{eq:psikt0}
[\psi_k]_{W^{s,p}(T_1)}^p \leq \frac{1}{k}.
\end{equation}
(We note that here we used the scale invariance.) 
Moreover, $v_k$ has a singularity 
\[
z_k = \frac{1}{\rho_k}y_k \in B_{\lambda}(0) \cap \left\{x \in \R^n\colon\, x_n \geq \frac{\lambda}{2}\right\}.
\]

Thus, by strong convergence of minimizers, \Cref{th:bsc} \eqref{it:stongconvergenceboundary} and by convergence of singularities, \Cref{th:ALs2s}, up to taking a subsequence, we find in the limit a minimizing harmonic map $v \in W^{1,2}(B_1^+(0),\n)$ which in view of \eqref{eq:psikt0} is constant on $T_1$, but has a~singularity
\[
z = \lim_{k\rightarrow\infty} z_k \in B_{\lambda}(0) \cap \left\{x \in \R^n\colon\, x_n \geq \frac{\lambda}{2}\right\}.
\]
This contradicts \Cref{th:boundaryregularity-constant}.
\end{proof}

The proof of Theorem~\ref{th:r4s} follows now from Proposition~\ref{th:rs:flat} by a blowup argument.

\begin{proof}[Proof of Theorem~\ref{th:r4s}]
Take $\eps$, $\lambda$, $R_0$ from Proposition \ref{th:rs:flat}.
%We have counter for any R
Assume on the contrary that we have a sequence of minimizing harmonic maps $u_i\colon \Omega \to \n$ with traces $u_i\big\rvert_{\partial \Omega} = \varphi_i$ such that for any $\rho_i<\frac 1i$ and $x_i\in \partial\Omega$
\[
 \rho_i^{sp-(n-1)}[\varphi_i]^p_{W^{s,p}(B_{\rho_i}(x_i)\cap \partial \Omega)} \le \eps
\]
and each $u_i$ has a singular point $y_i\in B_{\frac{1}{8}\lambda R_0 \rho_i}\cap\Omega$.

First of all we can assume that $|y_i - x_i| = \frac{1}{4}\lambda R_0 \rho_i$ and $\dist(y_i,\partial \Omega)> \frac18\lambda R_0 \rho_i $ (this can be done by choosing $\rho_i$ possibly smaller and moving $x_i$ to the projection of $y_i$ onto the boundary $\partial\Omega$).

Now define $v_i(x)= u_i \brac{ x_i + \rho_i x}$. We observe that, as in \Cref{sub:str8}, up to a rigid motion we may assume that for large enough $i$ we have 
\[
 \Omega_i \coloneqq \{x\in B_1(0)\colon x_n > \rho_i^{-1} \alpha(\rho_i x')\}
\]
where $\alpha\in C^1(\R^{n-1},\R)$ is the $C^1$ function which ''straightens out`` the boundary of $\Omega$ around $x_i\in \partial \Omega$ and $\alpha(0)=0$, $\nabla \alpha(0)=0$. (For large enough $i$ we have $\Omega_i\subset B_1^+(0)$). We note that $v_i\big\rvert_{\partial \Omega_i} = \psi_i$ are  still satisfying
\[
 [\psi_i]_{W^{s,p}(B_1(0)\cap \partial \Omega_i)}^p \le \eps.
\]
(Observe for this to hold true we only use the scale invariance of the expression). 

\begin{center}
	\begin{tikzpicture}
    \begin{scope}
    \clip (-2,-1.1) rectangle (2,1.1);
    \draw plot [smooth, tension=0.5] coordinates { (-3,1) (-2,0.25) (-1,0.3) (0,0) (1,0.3)  (3,2)};
	\draw (0,0) circle (1cm);
	\end{scope}
	\begin{scope}
	\clip (0,0) circle (1cm);
	\clip plot [smooth, tension=0.5] coordinates { (-3,1) (-2,0.25) (-1,0.3) (0,0) (1,0.3)  (3,2)};
	\fill[gray, opacity=0.1] (0,0) circle (1cm);
	\draw[blue!70, thick] plot [smooth, tension=0.5] coordinates { (-3,1) (-2,0.25) (-1,0.3) (0,0) (1,0.3)  (3,2)}; 
	\end{scope}
	\draw (-0.5,0.12) edge[->, bend right, blue] (-0.7,0.02); 
	\draw (0.72,0.12) edge[->, bend left, blue] (0.91,0.02); 
% 	\draw[black!40, thick] (1,0) arc (0:180:1);
% %	\node[scale=0.75, below] at (1.5,-0.7) {$\partial B_r(y_0)\cap \Omega$};
% 	\node[scale=0.75, below, blue!70] at (0.95,-0.1) {$ x\in T_{\Omega_r}$};
% %	\node[scale=0.75, below] at (1.5,1.1) {$ B_r(y_0)\cap \partial\Omega$};
	\node[scale=0.75, below] at (0.5,0.8) {$\Omega_i$};
    \draw[dashed] (-2,0)--(2,0);
    \draw[fill=black] (0,0) circle (1pt);
 	\draw[fill=black] (-1,0) circle (1pt);
 	\draw[fill=black] (1,0) circle (1pt);
 	\node[scale=0.75, below] at (0,0) {$0$};
	\end{tikzpicture}
\end{center}

Thus, by \Cref{pr:strongconvergenceconvergingboundary}, we obtain a minimizing harmonic map $v\colon B_1^+(0)\to \n$ which by Theorem~\ref{th:ALs2s} has a singularity $y$ in $B_{\frac{1}{4} \lambda R_0}^+(0)$ with $y_n \geq \frac{1}{8} \lambda R_0$. But $v\big\rvert_{T_1} = \psi$ satisfies
\[
 [\psi_i]_{W^{s,p}(T_1(0))}^p \le \eps.
\]
This contradicts \Cref{th:rs:flat}.
\end{proof}

As a corollary from Theorem~\ref{th:r4s} we also obtain that the $(n-3)$-dimensional Hausdorff measure of the singular set of a minimizing harmonic map into manifolds with finite fundamental group is finite even if the boundary data is non-smooth.
\begin{corollary}\label{co:fewsg}
Let $\Omega \subset \R^n$ be a bounded domain with a $C^1$-boundary and let $\pi_1(\n)$ be finite.
If $u\colon \Omega \to \n$ is a minimizing harmonic map with trace $\varphi \in W^{s,p}(\partial \Omega,\n)$ with $s\in(\frac12,1]$ and $sp=n-1$,
then $\H^{n-3}(\sing u)<\infty$.
\end{corollary}
\begin{proof}
Since $\varphi\in W^{s,p}(\partial \Omega, \n)$ and for $sp=n-1$ this space is scale invariant, we can take $\eps>0$ from Theorem~\ref{th:r4s} and find a $\rho>0$ such that
\[
 \sup_{x_0\in\partial \Omega} [\vp]^p_{W^{s,p}(B_\rho(x_0)\cap\partial \Omega)} \le \eps
\]
and obtain a $\lambda\rho$-neighborhood of $\partial \Omega$ on which the minimizer $u$ is smooth. By Corollary~\ref{co:NabVal-meas-bound} we know that $\H^{n-3}(\sing u \cap \Omega_{\lambda\rho/2})<\infty$, where $\Omega_{\lambda\rho/2}\coloneqq\{x\in\Omega\colon \dist(x,\partial\Omega)> \lambda\rho/2\}$. This finishes the proof.  
\end{proof}
Later, in Theorem~\ref{th:almgren-lieb-high} we obtain a more precise bound on size of the singular set in terms of the boundary data.

\begin{example}
 As noted in the introduction, $\Psi\colon B^n(0)\rightarrow \S^2$ defined by $\Psi(x',x'') = \frac{x'}{|x'|}$ for $x'\in \R^3$ and $x''\in \R^{n-3}$ is a minimizing harmonic map with the singular set of dimension $(n-3)$. For $n\ge 4$ the singular set of $\Psi$ touches the boundary of $\partial B^n(0)$ but the trace $\psi = \Psi\big\rvert_{\partial B^n(0)}$ does not satisfy \eqref{eq:singularbryregsmallnesscondition}  in \Cref{th:r4s}. Indeed, $\psi \in W^{1,p}(\pl B^n(0),\S^2)$ for any $p < 3$ but not for $p = 3$, so if $n \geq 4$ then $\psi \not \in W^{s,p}(\partial B^n(0))$ for $sp=n-1$.
\end{example}

\subsection{Hot spots}\label{ss:hotspots}
The following is a generalization of the \cite[Theorem 2.3 (v)]{AlmgrenLieb1988}, it shows how to control the energy over an annulus centered in a point at the boundary by a term that depends on the boundary term $\vp$ but not on the minimizer. 

\begin{theorem}[Bridge theorem]\label{th:AL23}
Let $\Omega \subset \R^n$ be a bounded domain with a $C^1$-boundary let also $s>\frac12$, $p>1$, and $sp>1$. There exists a number $r_0 = r_0(\Omega)>0$ with the following property. 

For $x_0 \in \partial\Omega$ let $A{(\rho,r)}(x_0)\coloneqq \{ x \in \R^n \colon \rho < \dist(x,x_0) < r \}$. Suppose also that $\pi_1(\n)$ is finite and that $u\in W^{1,2}(\Omega,\n)$ is a minimizer in $\Omega$ having boundary map $\vp$. Then, whenever $0<r<r_0$, we have for any $1<\theta<2$
\begin{equation}\label{eq:rainbowinequality}
 r^{2-n} \int_{A{(r,2r)}(x_0)\cap \Omega} |\nabla u|^2 \dd x \le C + C
 r^{(sp-(n-1))\frac{\theta}{sp}}[\varphi]^\frac{\theta}{s}_{W^{s,p}(A{(\frac r2, \frac{5r}{2})}(x_0)\cap \partial \Omega
 )},
 \end{equation}
where $C=C(s,p, \theta) > 0$ is a constant independent of $\Omega$, $u$, and $\varphi$.
\end{theorem}

\begin{remark}\label{rem:hotspotsW12}
Replacing \Cref{th:uniformboundedness}~\eqref{it:uniformboundfractionalboundry} by \Cref{th:uniformboundedness}~\eqref{it:uniformboundboundary} we get in place of \eqref{eq:rainbowinequality}
\[
  r^{2-n} \int_{A{(r,2r)}(x_0)\cap \Omega} |\nabla u|^2 \dd x \le C + C r^{3-n}\int_{A{(\frac{r}{2},\frac{5r}{2})}(x_0)\cap \partial \Omega} |\nabla \vp|^2 \dhn.
\]
\end{remark} 

\begin{proof}[Proof of \Cref{th:AL23}]
Since $\Omega$ is bounded with a $C^1$-boundary, we may choose $r_0$ so small, that the boundary $\partial \Omega \cap B_{r_0}(x)$ is almost uniformly flat for all $x \in \partial \Omega$. 
Then we can find points $(x_i)_{i=1}^{M_{{1}}}$ (with $M_1$ a uniform combinatorial number) satisfying the following properties:
\[
\bigcup_{i=1}^{M_{{1}}} B_{r/4}(x_i) \cap \partial \Omega \supset A(r,2r)(x_0) \cap \partial \Omega
\] 
and
\[
\dist\brac{\partial \Omega, A(r,2r)(x_0) \setminus \bigcup_{i=1}^{M_{{1}}} B_{r/4}(x_i)} \geq \frac{r}{8}.
\]

%%% I couldn't resist making a rainbow :D
\begin{center}
\begin{tikzpicture}
\clip (-3,-2.5) rectangle (3,3);
\begin{scope}[even odd rule]

\clip (0,-10) circle (10) (-3,-2.5) rectangle (3,3);
 \fill[color=gray!40] (0,0) circle (2);
 \fill[color=white] (0,0) circle (1);
\end{scope}

\draw[thick, shift={(0,-10)}] (106:10) arc (106:74:10);
\draw[shift={(0,-10)}] (96:10) circle (0.25);
\draw[shift={(0,-10)}] (97.66:10) circle (0.25);
\draw[shift={(0,-10)}] (99.33:10) circle (0.25);
\draw[shift={(0,-10)}] (101:10) circle (0.25);

\begin{scope}[yscale=1,xscale=-1]% Mirror Image
\draw[shift={(0,-10)}] (96:10) circle (0.25);
\draw[shift={(0,-10)}] (97.66:10) circle (0.25);
\draw[shift={(0,-10)}] (99.33:10) circle (0.25);
\draw[shift={(0,-10)}] (101:10) circle (0.25); 
\end{scope}
\node[scale=0.7] at (0,1.5) {$A(r,2r)$};
\node[scale=0.7] at (2.5,0.7) {$\Omega$};
\node[scale=0.7] at (2.8,-0.2) {$\partial \Omega$};
\end{tikzpicture}

\end{center}
Also, there exists a uniform number $M_2$ such that we always find $(y_j)_{j=1}^{M_2}$ so that $B_{r/6}(y_j)  \subset \Omega$ and
\[
\Omega \supset \bigcup_{j=1}^{M_{2}} B_{r/8}(y_j) \supset \Omega \cap A(r,2r)(x_0) \setminus B_{r/8}(\partial \Omega).
\]
From Theorem \ref{th:uniformboundedness}~\eqref{it:uniformboundfractionalboundry} we obtain for any $1<\theta<2$
\[
  r^{2-n}\int_{B_{r/4}(x_i) \cap \Omega} |\nabla u|^2 \dx \aleq 1 + r^{(sp-(n-1))\frac{\theta}{sp}}[\varphi]^\frac{\theta}{s}_{W^{s,p}(B_{r/2}(x_i) \cap \partial \Omega)},
\]
% 
% 
% 
% \[
% r^{2-n}\int_{B_{r/4}(x_i) \cap \Omega} |\nabla u|^2 \dx \aleq 1 + r^{sp-(n-1)}[\varphi]^p_{W^{s,p}(B_{r/2}(x_i) \cap \partial \Omega)}
% %\int_{B_{r/2}(x_i) \cap \partial \Omega} \int_{B_{r/2}(x_i) \cap \partial \Omega} \frac{|\vp(x)-\vp(y)|^p}{|x-y|^{n-1+sp}}\dd x\dd y
% \]
Summing over all the $x_i$ we obtain 
\begin{equation}\label{eq:intnablaubd}
r^{2-n}\int_{A(r,2r)(x_0) \cap (\Omega \cap B_{r/8}(\partial \Omega))} |\nabla u|^2 \dx\aleq M_1 + M_1 r^{(sp-(n-1))\frac{\theta}{sp}}[\varphi]^\frac{\theta}{s}_{W^{s,p}(B_{r/2}(x_i) \cap \partial \Omega)}.
\end{equation}
From Theorem \ref{th:uniformboundedness}~\eqref{it:uniformboundinterior} we obtain
\[
r^{2-n}\int_{B_{r/8}(y_j)} |\nabla u|^2 \dx \aleq  \frac{\frac{r}{8}}{r-\frac{r}{8}} \aeq 1.
\]
and thus summing over $y_j$,
\begin{equation}\label{eq:intnabaluint}
r^{2-n}\int_{A(r,2r)(x_0) \cap (\Omega \setminus B_{r/8}(\partial \Omega))} |\nabla u|^2 \dx \aleq M_2 .
\end{equation}
Together \eqref{eq:intnablaubd} and \eqref{eq:intnabaluint} give the claim.
\end{proof}

With this uniform energy bound, we can actually show that \emph{boundary energy in small balls cannot induce distant singularities} \cite[Thm.~2.4]{AlmgrenLieb1988}. In the contradiction argument, the hot spot tends to zero in size and disappear completely in the limit. 

\begin{theorem}[regularity away from ``hot spots'']\label{th:AL24}
Let $s>\frac12$, $p>1$, and $sp> 1$. There is an $\eps(n) > 0$ such that the following holds. Suppose $\n$ is a manifold with finite fundamental group, $u \in W^{1,2}(\B^+_1(0),\n)$ is a minimizer with trace $\vp$ on $T_1$, and 
\begin{equation}\label{eq:regawayhotspotscondition}
 [\varphi]^{p}_{W^{s,p}(T_{1} \setminus \B_\eps(x_0))} \le \eps
\end{equation}
for some ball $\B_\eps(x_0)$. Then $u$ is smooth in
\[
 T_{1/2} \times (\mu/2,\mu),
\]
where $\mu > 0$ is a small constant depending on $n$ and $\cN$.
\end{theorem}

% \begin{remark}
%  For $s=1$, the number $\theta$ can be omitted and the condition \eqref{eq:regawayhotspotscondition} can be replaced by 
%  \begin{equation}\label{eq:regawayhotspotsconditionw12}
%   \int_{T_1\setminus B_\eps(x_0)} |\nabla \vp|^2 \dhn < \eps.
%  \end{equation}
%  The only difference is that in the proof we use \Cref{rem:hotspotsW12} in place of \Cref{th:AL23}.
% \end{remark}
\begin{proof}
We argue by contradiction. Assume that $u_i \colon B^+_1(0) \to \n$ is a sequence of minimizers with boundary maps $\vp_i$ such that
\[
[\varphi_i]^{p}_{W^{s,p}(T_{1} \setminus \B_{\eps_i}(x_i))} \le \eps_i
\] 
for a sequence of balls $B_{\eps_i}(x_i)$ and $\eps_i \xrightarrow{i \to \infty} 0$. For $r_i\le \eps_i$ we have
\begin{equation}\label{eq:powerisfun}
 r_i^{(sp-(n-1))}[\varphi_i]^{p}_{W^{s,p}(T_{1} \setminus \B_{r_i}(x_i))} \le \eps_i r_i^{(sp-(n-1))}
\end{equation}
Setting $r_i \coloneqq (\eps_i)^{\frac{1}{\kappa}}$, where
\[
 \kappa = \frac{sp}{\theta} + (n-1)
\]
and $\theta\in(1,2)$ we have $\kappa>0$, (thus in particular $r_i < \eps_i$), and from \eqref{eq:powerisfun} we get 
 \begin{equation}\label{eq:varphitrace}
 \brac{r_i}^{(sp-(n-1))\frac{\theta}{sp}}
 [\varphi_i]^{\frac{\theta}{s}}_{W^{s,p}(T_1\setminus B_{r_i}(x_i))} \le r_i
 \end{equation}
where $r_i \xrightarrow{i \to \infty} 0$, and up to taking a subsequence, $r_i < 2^{-i}$.

Now, we assume (by contradiction) that each $u_i$ has at least one singularity $y_i \in T_{1/2} \times (\mu/2,\mu)$.

By \Cref{th:AL23}, for large enough $i$ and for any $r \ge 2^{-i}$
\[
r^{2-n}\int_{B^+_1 \cap A{(r,2r)}(x_i)}|\nabla u_i|^2 \dx \le C.
\]
Thus, for every $1 \le k \le i$,
\[
\int_{\B^+_1 \cap A{(2^{-k},2^{-k+1})}(x_i)}|\nabla u_i|^2 \dd x \le C\, 2^{-k(n-2)}.
\]
Up to taking another subsequence we can assume that $x_i \to x_0$, and for convenience also $|x_i-x_0| \le 2^{-i}$. Then, from the above estimate we have
\begin{equation}
\label{eq:bound-away-from-p0}
\int_{\B^+_{4/5} \setminus \B_{2^{-2i}}(x_0)}|\nabla u_i|^2 \dd x \le C\, \sum_{k=1}^{i} 2^{-k(n-2)} \le C.
\end{equation}
In particular by a diagonal argument and the strong convergence of minimizers, Theorem~\ref{th:bsc}, we obtain a minimizer $u$ in $W^{1,2}(\B_{3/4}^+(0) \setminus \B_{r}(x_0),\n)$ for any $r > 0$. Moreover, its trace, which we shall call $\varphi \in W^{1,2}_\loc(T_{1} \setminus \{ x_0 \},\n)$ is the limit of $\varphi_i$. Observe that $\varphi$ is constant on $T_{1}$ by \eqref{eq:varphitrace}. 

Moreover, by Theorem~\ref{th:ALs2s} the sequence of singular points $y_i$ can be assumed to converge to a singular point of $u$, which we call $y \in T_{1/2} \times [\mu/2,\mu]$. 

To reach a contradiction with Theorem~\ref{th:boundaryregularity-constant}, one needs to solve the subtle issue of minimality around $x_0$. To this end, we note that by \eqref{eq:bound-away-from-p0} the energy $\int_{\B^+_{3/4} \setminus \B_{r}(x_0)} |\nabla u_i|^2 \dx$ is uniformly bounded for all $r>0$, and hence by monotone convergence $u \in W^{1,2}(\B_{3/4}^+(0),\n)$. In view of Lemma~\ref{lem:removesg} below, the singularity $x_0$ is removable, and so $u$ is a minimizing harmonic map in $\B_{3/4}^+(0)$ with a constant boundary map $\vp$. This contradicts the singularity at $y$. 
\end{proof}

To complete the proof of Theorem~\ref{th:AL24}, we need the following removability lemma. 

\begin{lemma}[Removability of points for minimizing harmonic maps]
\label{lem:removesg}
Assume that $\pi_1(\n)$ is finite and $u \in W^{1,2}(\B_1^+(0),\n)$ is a minimizer away from the origin, i.e., for any $\delta > 0$ and any $v \in W^{1,2}(\B_1^+(0),\n)$ satisfying $v = u$ on $\partial \B_1^+(0)$ and $v \equiv u$ on $\B_\delta^+(0)$ we have 
\begin{equation}\label{eq:dmin}
\int_{\B_1^+(0) \setminus \B_{\delta}^+(0)} |\nabla u|^2 \dd x \le \int_{\B_1^+(0) \setminus \B_{\delta}^+(0)} |\nabla v|^2 \dd x.
\end{equation}
Then $u$ is a minimizing harmonic map in all of $\B_1^+(0)$.
\end{lemma}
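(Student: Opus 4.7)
The plan is to fix an arbitrary competitor $v \in W^{1,2}(B_1^+,\S^2)$ with $v = u$ on $\partial B_1^+$ and construct, for each small $\delta > 0$, a near-by map $\tilde v_\delta \in W^{1,2}(B_1^+,\S^2)$ that agrees with $u$ on a small half-ball $B_\delta^+$ around the origin, agrees with $v$ outside $B_{2\delta}^+$, and such that the energy cost of the interpolation on the annular half-shell $A_\delta := B_{2\delta}^+\setminus \overline{B_\delta^+}$ vanishes in the limit $\delta\to 0$. The hypothesis \eqref{eq:dmin} applied to $\tilde v_\delta$ then yields the desired minimality of $u$ on all of $B_1^+$.

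Concretely, pick a radial cutoff $\eta_\delta\in C^\infty_c(B_{2\delta},[0,1])$ with $\eta_\delta \equiv 1$ on $B_\delta$ and $|\nabla\eta_\delta|\le C/\delta$, and set
\[
w_\delta := \eta_\delta\, u + (1-\eta_\delta)\, v \quad\text{on } A_\delta.
\]
Since $u = v$ on $\partial B_1^+$ (in particular on $T_1$), the trace of $w_\delta$ on $\partial A_\delta$ lies in $\S^2$ a.e.: on the inner hemisphere $S_\delta^+$ it equals $u$, on the outer hemisphere $S_{2\delta}^+$ it equals $v$, and on the flat piece $T_{2\delta}\setminus T_\delta$ the two maps coincide. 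Applying the extension property (Theorem \ref{th:extensionthm}) to $w_\delta$ on the Lipschitz domain $A_\delta$, we obtain $\widetilde w_\delta \in W^{1,2}(A_\delta,\S^2)$ with the same trace on $\partial A_\delta$ and
\[
\int_{A_\delta}|\nabla \widetilde w_\delta|^2\,dx \le C\int_{A_\delta}|\nabla w_\delta|^2\,dx.
\]
Then we define
\[
\tilde v_\delta :=
\begin{cases}
u & \text{on } B_\delta^+,\\
\widetilde w_\delta & \text{on } A_\delta,\\
v & \text{on } B_1^+\setminus \overline{B_{2\delta}^+},
\end{cases}
\]
which lies in $W^{1,2}(B_1^+,\S^2)$ with $\tilde v_\delta = u$ on $\partial B_1^+$ and $\tilde v_\delta \equiv u$ on $B_\delta^+$, so it is admissible in \eqref{eq:dmin}.

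The core estimate is that
\begin{equation}\label{eq:removesg:key}
\int_{A_\delta}|\nabla w_\delta|^2\,dx \le \frac{C}{\delta^2}\int_{A_\delta}|u-v|^2\,dx + C\int_{A_\delta}|\nabla u|^2\,dx + C\int_{A_\delta}|\nabla v|^2\,dx \xrightarrow{\delta\to 0} 0,
\end{equation}
which is immediate: the last two terms tend to $0$ by absolute continuity of the Lebesgue integral (as $u,v\in W^{1,2}(B_1^+)$), while $|u-v|\le 2$ together with $|A_\delta|\le C\delta^3$ gives $\delta^{-2}\int_{A_\delta}|u-v|^2\,dx \le C\delta$. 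Combining \eqref{eq:dmin} applied to $\tilde v_\delta$ with the extension estimate and \eqref{eq:removesg:key},
\[
\int_{B_1^+\setminus B_\delta^+}|\nabla u|^2\,dx \le \int_{A_\delta}|\nabla \widetilde w_\delta|^2\,dx + \int_{B_1^+\setminus B_{2\delta}^+}|\nabla v|^2\,dx \le o(1) + \int_{B_1^+}|\nabla v|^2\,dx.
\]
Letting $\delta\to 0$, monotone convergence on the left yields $\int_{B_1^+}|\nabla u|^2 \le \int_{B_1^+}|\nabla v|^2$, which is the full minimizing property. The only nontrivial point is making sure the trace of $w_\delta$ lies in $\S^2$ on all of $\partial A_\delta$ so that Theorem \ref{th:extensionthm} can be invoked; this is exactly where the hypothesis $v = u$ on the flat part $T_1$ enters.
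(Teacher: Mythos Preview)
Your argument is correct and follows essentially the same route as the paper: interpolate between $u$ and the competitor on the half-annulus $B_{2\delta}^+\setminus B_\delta^+$ via a cutoff, use the extension property (Theorem~\ref{th:extensionthm}) to project the interpolant back into $\S^2$, apply the restricted minimality \eqref{eq:dmin}, and let $\delta\to 0$ using that $|u-v|\le 2$ and $|A_\delta|\aleq \delta^3$. Your observation that the constant in Theorem~\ref{th:extensionthm} is domain-independent (so it applies uniformly on $A_\delta$) and your care in checking that the trace of $w_\delta$ lies in $\S^2$ on all of $\partial A_\delta$---particularly on the flat piece, where $u=v$ by the boundary condition---are exactly the points the paper relies on as well.
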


\begin{proof}
Let $w \in W^{1,2}(B_1^+(0),\n)$ with $u \equiv w$ on $\partial B_1^+(0)$. We need to show that 
\begin{equation}\label{eq:min}
\int_{B_1^+(0)} |\nabla u|^2 \dx \leq \int_{B_1^+(0)} |\nabla w|^2 \dx.
\end{equation}
Let us lift $w=\pi \circ \tilde w$, $u=\pi\circ \tilde u$ as in \Cref{th:BethuelChiron}, where $\tilde w, \tilde u \in W^{1,2}(B_1^+(0),\widetilde \n)$ and $\widetilde \n$ is the universal cover of $\n$ and we have a.e. $|\nabla w| = |\nabla \tilde w|$ and $|\nabla u| = |\nabla \tilde u|$. For $\delta > 0$, let $\eta_\delta \in C_c^\infty(\B_{2\delta}(0))$ be a standard cut-off function satisfying $\eta_\delta \equiv 1$ in $\B_{\delta}(0)$ and $|\nabla \eta_\delta| \le \frac{2}{\delta}$. We define $\tilde{v}_\delta\in W^{1,2}(\B_1^+(0),\R^d)$ as 
\[
\tilde{v}_\delta \coloneqq (1-\eta_\delta) \tilde w + \eta_\delta \tilde u;
\]
this function satisfies $\tilde{v}_\delta = \tilde u$ on $\partial B_1^+(0)$, $\tilde{v}_\delta \equiv \tilde u$ in $B_\delta^+(0)$ and $\tilde{v}_\delta \equiv \tilde w$ in $B_1^+ \setminus B_{2\delta}(0)$. Since $\widetilde \n$ is compact and simply connected we may apply Theorem~\ref{th:extensionthm}, applied in $B_{2\delta}^+(0) \setminus B_\delta(0)$ and find $\tilde w_\delta \in W^{1,2}(B_{2\delta}^+(0)\setminus B_\delta(0),\widetilde \n)$ such that $\tilde w_\delta = \tilde v_\delta$ on $\partial (B_{2\delta}^+(0)\setminus B_\delta(0))$ and
\begin{equation}\label{eq:vwcomparisontilda}
 \int_{B_{2\delta}^+(0)\setminus B_{\delta}(0)} |\nabla \tilde w_\delta| \dx \le \int_{B_{2\delta}^+(0)\setminus B_{\delta}(0)} |\nabla \tilde v_\delta|^2 \dx.
\end{equation}
We extend $\tilde w_\delta$ to the whole half-ball $B_1^+(0)$ by setting $\tilde w_\delta \equiv \tilde u$ in $B_\delta^+(0)$ and $\tilde w_\delta = \tilde w$ in $B_1^+(0)\setminus B_{2\delta}(0))$. Now let us define $w_\delta \coloneqq \pi \circ \tilde w_\delta \in W^{1,2}(B_1^+(0),\n)$ and note that we have
\[
w_\delta = \begin{cases}
u \quad \mbox{in $B_\delta^+(0)$}\\
w \quad \mbox{in $B_1^+(0) \setminus B_{2\delta}(0)$}\\
u \quad \mbox{on $\partial B_1(0)$}
\end{cases}
\]
and from \eqref{eq:vwcomparisontilda}
\[
 \int_{B_{2\delta}^+(0)\setminus B_\delta (0)} |\nabla w_\delta|^2 \dx = \int_{B_{2\delta}^+(0)\setminus B_\delta (0)} |\nabla \tilde w_\delta|^2 \dx \le \int_{B_{2\delta}^+(0)\setminus B_\delta (0)} |\nabla \tilde v_\delta|^2 \dx. 
\]
In particular, $\tilde{w}_\delta$ is a competitor in the sense of \eqref{eq:dmin}, and we have 
\[
\begin{split}
\int_{B_1^+(0) \setminus B_{\delta}(0)} |\nabla u|^2 \dx &\leq \int_{B_1^+(0) \setminus B_{\delta}(0)} |\nabla w_\delta|^2 \dx\\
 &= \int_{B_1^+(0) \setminus B_{2\delta}(0)} |\nabla w_\delta|^2 \dx + \int_{B_{2\delta}^+(0) \setminus B_{\delta}(0)} |\nabla w_\delta|^2\dx \\
&\leq  \int_{B_1^+(0) \setminus B_{2\delta}(0)} |\nabla w|^2 \dx + C\int_{B_{2\delta}^+(0)} |\nabla \tilde{v}_\delta|^2 \dx.\\
\end{split}
\]
Since $u$, and $ w \in W^{1,2}(\B_1^+(0))$ using the absolute continuity of the integral we find that
\begin{equation}\label{eq:almminlim}
\int_{B_1^+(0)} |\nabla u|^2 \dx \leq \int_{B_1^+(0)} |\nabla w|^2 \dx + C\, \liminf_{\delta \to 0}\int_{B_{2\delta}^+(0)} |\nabla \tilde{v}_\delta|^2 \dx.
\end{equation}
Now
\[
\int_{B_{2\delta}^+(0)} |\nabla \tilde{v}_\delta|^2 \dx 
\aleq \frac{1}{\delta^2} \int_{B_{2\delta}^+(0)} |\tilde u- \tilde w|^2 \dx + \int_{B_{2\delta}^+(0)} |\nabla \tilde u|^2 \dx + \int_{B_{2\delta}^+(0)} |\nabla \tilde w|^2 \dx.
\]
Observe that we are in dimension $n \geq 3$ and since $\tilde u, \tilde w$ have values in the compact manifold $\widetilde \n$, we get
\[
\frac{1}{\delta^2} \int_{B_{2\delta}^+(0)} |\tilde u-\tilde w|^2 \dx\aleq \delta.
\]
Thus, using again the absolute continuity of the integral and that $\tilde u, \tilde w \in W^{1,2}$ we find
\[
\lim_{\delta \to 0} \int_{B_{2\delta}^+(0)} |\nabla \tilde{w}_\delta|^2 \dx = 0.
\]
Plugging this into \eqref{eq:almminlim} we conclude.
\end{proof}

In the applications, we will use the following global version of Theorem~\ref{th:AL24} (see \cite[Cor.~2.7]{AlmgrenLieb1988}). 
%As a corollary of Theorem~\ref{th:AL24} we obtain

\begin{theorem}[boundary regularity with hot spots]
\label{th:hot-spots}
Let $s\in(\frac12,1]$, $p>1$, and $sp>1$. Let us also assume that $\pi_1(\n)$ is finite. For each bounded domain $\Omega \subset \R^n$ with $C^1$-boundary, there are small constants $\sigma, \eps, \lambda > 0$, $\Lambda > 1$, ($\sigma$ depending on $\cN$ and the geometry of $\Omega$, the others only on $n$ and $\cN$) so that the following statement holds true for any minimizer $u \in W^{1,2}(\Omega,\n)$ with trace $\varphi \coloneqq u \Big |_{\partial \Omega}$.

For any singular point $y \in \sing u$ with $r \coloneqq \dist(y,\partial \Omega) < \sigma$ and for any ball $B \subset \R^n$ with radius $\lambda r$, we have
\begin{equation}\label{eq:bdryreghotspotsfractionalnorm}
 r^{sp-(n-1)}[\vp]^p_{W^{s,p}(\partial \Omega \cap \brac{B_{\Lambda r}(y) \setminus B})} \geq \eps. 
\end{equation}
\end{theorem}

% \begin{remark}
%  For $s=1$ the condition \eqref{eq:bdryreghotspotsfractionalnorm} can be replaced by
%  \[ 
% r^{3-n}\int_{\partial \Omega \cap \brac{B_{\Lambda r}(y) \setminus B}} |\nabla \vp|^{2} \dd \cH^{n-1}  \geq \eps. 
%  \]
% \end{remark}

\begin{proof}
In principle, this is a rescaled version of Theorem~\ref{th:AL24}, only with a non-flat boundary. 

Assume to the contrary that we have a sequence of minimizing harmonic maps, $u_i \in W^{1,2}(\Omega,\n)$ with trace $\varphi_i \coloneqq u_i \Big |_{\partial\Omega}$ and singularities $y_i \in \sing u_i$ with $r_i \coloneqq \dist(y_i,\partial \Omega) < \frac{1}{i}$, but there exists a ball $B_i \subset \R^n$ of radius $\lambda r_i$ with
\[
r_i^{sp-(n-1)}[\vp_i]^p_{W^{s,p}(\partial \Omega \cap \brac{B_{\Lambda r_i}(y_i) \setminus B_i})}  < \eps. 
\]
After rescaling $v_i(x) \coloneqq u_i(y_i + r_i x)$, in view of \Cref{th:bsc} and \Cref{pr:strongconvergenceconvergingboundary} we find in the limit a map which contradicts \Cref{th:AL24} .
\end{proof}

\section{Hardt and Lin's stability of singularities for \texorpdfstring{$n \geq 3$}{n >= 3}}
\label{sec:stability}
This section is concerned with stability of singularities. By this we mean that if two boundary maps $\vp, \vp' \colon \partial \Omega \to \S^2$ are \emph{close} in the right Sobolev norm, then the singularities of their corresponding minimizers $u, u' \colon \Omega \to \S^2$ are \emph{close} as well. Since minimizers are in general non-unique, the precise statement is a little more subtle --- e.g., by assuming uniqueness a priori. 

In any case, let us discuss the right notions of \emph{closeness}. In dimension $n = 3$, when the singular set consists of finitely many points, Hardt and Lin \cite{HL1989} considered the Lipschitz norm for boundary data, and showed that small perturbations do not change the number of singularities. Moreover, they constructed a bi-Lipschitz diffeomorphism $\eta \colon \Omega \to \Omega$ (close to identity in Lipschitz norm) such that $u$ is close to $u' \circ \eta$ in some $C^\beta$ norm. These results were recently extended to the case of $W^{1,2}$-perturbations of boundary data by Li \cite{Li18}. 

In higher dimension $n \ge 3$, we consider perturbations in the $W^{1,n-1}$ norm. Since the singular set is a rectifiable set of codimension $3$, we prove its stability with respect to a~version of Wasserstein metric (see \cite{Villani}) also referred to as the flat metric: 
\begin{equation}
\label{eq:wasserstein}
d_W(\mu,\nu) = \sup \left \{ \int_{\R^n} h \dd \mu - \int_{\R^n} h \dd \nu \colon \ h \colon \R^n \to \R, \ |h| \le 1, \ |\nabla h| \le 1 \right \},
\end{equation}
i.e., we show that the distance between measures $\cH^{n-3} \llcorner \sing u$ and $\cH^{n-3} \llcorner \sing u'$ is small. Since taking $h \equiv 1$ in the definition yields 
\[
|\mu(\R^n) - \nu(\R^n)| \le d_W(\mu,\nu),
\]
we obtain in particular that the size of the singular set $\cH^{n-3}(\sing u)$ is also stable under $W^{1,n-1}$-perturbations of boundary data. 

\begin{theorem}[stability of singularities]
\label{th:global-stability}
Let $\Omega \subset \R^n$ be a bounded domain with a $C^1$-boundary and let $u \in W^{1,2}(\Omega,\S^2)$ be a~minimizer with boundary data $\varphi \in W^{1,n-1}(\partial \Omega,\S^2)$. If $u_k$ is a~sequence of minimizers with boundary data $\varphi_k$ and 
\begin{equation}\label{eq:thgs:convergence}
u_k \to u \text{ in } W^{1,2}(\Omega), 
\quad 
\varphi_k \to \varphi \text{ in } W^{1,n-1}(\partial \Omega), 
\end{equation}
then 
\[
\H^{n-3} \llcorner \sing u_k 
\xrightarrow{d_W}
\H^{n-3} \llcorner \sing u,
\]
in particular $\H^{n-3}(\sing u_k) \to \H^{n-3}(\sing u)$. 
\end{theorem}
Under the assumption of uniqueness, we obtain immediately Theorem~\ref{th:stabilitynD}

\begin{proof}[Proof of Theorem~\ref{th:stabilitynD}]
For the sake of contradiction, let $u_k$ be a sequence of minimizers with boundary data $\vp_k$ satisfying $\vp_k \to \vp$ in $W^{1,n-1}(\partial \Omega, \S^2)$ but not satisfying the claim. Taking a subsequence, by Theorem~\ref{th:bsc} \eqref{it:strongconvergencegeneral} we may assume that $u_k$ converges in $W^{1,2}(\Omega,\S^2)$ to a minimizer $\overline{u}$ with boundary data $\vp$. By uniqueness, $\overline{u} = u$ and Theorem~\ref{th:global-stability} implies that $\H^{n-3} \llcorner \sing u_k$ tends to $\H^{n-3} \llcorner \sing u$. Thus, we obtain a contradiction for large enough~$k$. 
\end{proof}

{We would like to emphasize that here we work only with the target manifold $\n = \S^2$ and we could not generalize the argument to the case of simply connected manifolds. The reason behind this is the Brezis--Coron--Lieb \cite{BCL1986} classification of tangent maps for $\S^2$ and its generalization Corollary~\ref{co:uniquetangent}, from which we deduce that all $(n-3)$-dimensional tangent maps have the same energy density.

The singularities were also classified for $\n=\S^3$ by Nakajima \cite{Nakajima}, see also \cite{LinWang2006}. It was shown by Schoen and Uhlenbeck \cite{SU84} that in the case when the target manifold is a $3$-dimensional sphere, then $\dim_{\mathcal{H}} \sing u \leq n-4$. This is why it is possible to extend Theorem~\ref{th:stabilitynD}, following the same arguments, in the terms of the stability of the highest stratum, which in this case is a $(n-4)$-dimensional set. In other words, it is possible to} consider $\mathcal{H}^{n-4} \llcorner \sing u$ in place of $\mathcal{H}^{n-3} \llcorner \sing u$.

\subsection{Outline}

In analogy to the original argument of Hardt and Lin \cite{HL1989}, the heart of the argument lies in the special case when $u$ is the tangent map $\Psi$ as in \eqref{eq:def-psi} given by 
\[
\R^3 \times \R^{n-3} \ni (x',x'')
\xmapsto{\quad \Psi \quad}
\frac{x'}{|x'|} \in \S^2.
\]
Establishing a stability result for the singular set (which for $\Psi$ is an $(n-3)$-dimensional plane) requires some care. Here we adopt the notion of $\delta$-flatness introduced in \cite{Mis18}, which combines topological and analytic conditions for a minimizer to be \emph{close} to $\Psi$. In Section~\ref{sec:stab-flatness} we cite the necessary results and also show that the condition for $\delta$-flatness is stable under $W^{1,2}$-perturbations of the minimizer (Proposition~\ref{prop:stability-of-flatness}). 

With this in hand, we are able to modify the original arguments of Naber and Valtorta \cite{NabVal17} and improve on them in the special case of maps into $\S^2$. In result, we obtain the stability result for $\Psi$ mentioned earlier (Lemma \ref{lem:local-stability}). 

Since around $\mathcal{H}^{n-3}$-almost every singular point, any energy minimizer is close to the map $\Psi$ (composed with an isometry), this stability result can be seen as a local case for Theorem~\ref{th:global-stability}. Indeed, in Section~\ref{sec:stab-global} we cover most of the singular set of $u$ by balls on which Lemma~\ref{lem:local-stability} can be applied. An argument based on Proposition~\ref{prop:stability-of-flatness} then shows that the same covering works for both $\sing u$ and $\sing u_k$, and the global estimate follows. 

%By boundary regularity, the singular set of $u$ can be covered with finitely many small balls, so that $u$ is $\delta$-flat in each of them. Then Lemma~\ref{lem:local-stability} can be used to show global estimates for the measure.  

\subsection{Behavior of top-dimensional singularities}
\label{sec:stab-flatness}
This subsection gathers the results of \cite{Mis18}, which allow us to study further the top-dimensional part of the singular set. 

Recall the tangent map $\Psi$ from \eqref{eq:def-psi} with its energy density $\Theta$ from \eqref{eq:def-Theta}, and the rescaled energy $\theta_u$ from \eqref{eq:thetau}. We introduce the following property, which basically says that $u$ is close to $\Psi$ (up to an isometry) on the ball $B_r(x)$. 
\begin{definition}[$\delta$-flatness]
\label{df:mis-flatness}
We say that an energy minimizer $u \colon \Omega \to \S^2$ is $\delta$-flat in the ball $\B_r(x) \subset \Omega$ if 
\begin{enumerate}
\item
%\label{df:flatness-energy}
$x$ is a singular point of $u$ and $\Theta \le \theta_u(x,0) \le \theta_u(x,r) \le \Theta + \delta$, 
\item
%\label{df:flatness-Reifenberg}
for some $(n-3)$-dimensional affine plane $L$ through $x$, $\sing u \cap B_r(x) \subset B_{r/10}(L)$, 
\item
$u$ restricted to $(x+L^\perp) \cap \partial \B_{r/2}(x)$ has degree $\pm 1$ as a map to $\S^2$. 
\end{enumerate}
\end{definition}
Note that this definition is scale-invariant in the following sense: $u$ is $\delta$-flat in $B_r(x)$ if and only if the rescaled map $\overline{u}(y) = u(x+ry)$ is $\delta$-flat in $B_1$. Also note that $u$ is smooth outside the tube around $L$ by and thus the degree is well-defined. 

Definition~\ref{df:mis-flatness} is strongly reminiscent of \cite[Def.~4.3]{Mis18}. There, Reifenberg flatness is additionally assumed, but it follows from 

\begin{lemma}[{{\cite[Lemma~5.1]{Mis18}}}]
\label{lem:mis-forced-sing}
Assume that $\sing u \cap B_r(x) \subset B_{\eps r}(L)$ for some $0 < \eps < \frac 12$ and some $(n-3)$-dimensional plane $L$ through $x$. Moreover, assume that $u$ restricted to $(x+L^\perp) \cap \partial \B_{r/2}(x)$
has degree $\pm 1$ as a map from $\S^2$ to itself. Then 
\[
L \cap B_{(1-\eps)r}(x) \subset \pi_L(\sing u \cap B_r(x)). 
\]
Here and henceforth, $\pi_L$ denotes the nearest-point projection from $\R^n$ onto $L$.
\end{lemma}

In particular, it follows from our definition of $\delta$-flatness that $L \cap B_r(x) \subset B_{r/5}(\sing u)$. This allows us to apply the results of \cite{Mis18}.

The first important point is that around each point in top-dimensional part of the singular set, $\sing_* u$, the map $u$ satisfies the $\delta$-flatness property on sufficiently small balls.

\begin{lemma}[{{\cite[Cor~5.4,~Lem~5.8]{Mis18}}}]
\label{lem:mis-some-flatness}
Let $x \in \sing_* u$. Then for each $\delta > 0$ there is $r_0 > 0$ such that $u$ is $\delta$-flat in $B_r(x)$ for all $r \in (0,r_0]$. 
\end{lemma}

Below we also note various consequences of $\delta$-flatness proved in \cite{Mis18}. For simplicity, we only deal with the unit ball, but one can easily obtain the corresponding statement for any ball using the scale-invariance. 

\begin{theorem}
\label{th:mis-all}
For each $\eps > 0$ there is $\delta > 0$ such that the following holds. If $u$ is $\delta$-flat in $B_2$, then 
\begin{enumerate}

\item for some tangent map of the form $\overline{\Psi} = \Psi \circ \tau$ (with $\Psi$ as in \eqref{eq:def-psi} and some linear isometry $\tau$) we have 
\[
\| u - \overline{\Psi} \|^2_{W^{1,2}(B_1)} \le \eps, 
%\| u - \overline{\Psi} \|^2_{L^2(B_r(x))} + 
%r^2 \| \nabla u - \nabla \overline{\Psi} \|^2_{L^2(B_r(x))}
%\le \eps r^n. 
\]

\item for the $(n-3)$-dimensional linear plane $L' \coloneqq \sing \overline{\Psi}$, 
\[
\sing u \cap B_1 \subset B_\eps (L')
\quad \text{and} \quad
L' \cap B_{1-\eps} \subset \pi_{L'}(\sing u \cap B_1), 
\]

\item all singular points in $B_{1}$ lie in the top-dimensional part $\sing_* u$, and $u$ is $\eps$-flat in each of the balls $B_r(z)$ with $z \in \sing u \cap B_{1}$ and $0 < r \le 1/2$. 

\end{enumerate}
\end{theorem}

\begin{proof}
%First we need to check the missing condition from \cite[Def~4.3]{Mis18}. To this end, we let $L'$ be the $(n-3)$-dimensional plane from Definition \ref{df:mis-flatness} and apply \cite[Lem.~5.1]{Mis18} --- or rather the simplified reasoning preceding the full proof --- to obtain $L' \cap B_{1-\frac{1}{10}} \subset \pi_{L'}(\sing u \cap B_1)$. This is essentialy stronger than the missing condition $L' \cap B_1 \subset B_{\frac{1}{5}}(\sing u)$ appearing in \cite[Def~4.3]{Mis18} (the change from $1/10$ to $1/5$ is non-essential). 
Due to Lemma~\ref{lem:mis-forced-sing}, we may apply the results of \cite{Mis18} directly. 

Points (1) and (2) are essentially the content of \cite[Lem~5.3]{Mis18}, except for the condition $L \cap B_{1-\eps} \subset \pi_L(\sing u \cap B_1)$, which again follows from Lemma~\ref{lem:mis-forced-sing}. Point (3) comes from combining \cite[Prop~5.6]{Mis18} and its corollary \cite[Cor~5.7]{Mis18}. 
\end{proof}

The last ingredient is another consequence of the arguments in \cite{Mis18}. It is to some extent the higher-dimensional analogue of \cite[Theorem 1.8, (2)]{AlmgrenLieb1988}.

\begin{proposition}[Stability of $\delta$-flatness]
\label{prop:stability-of-flatness}
For each $\eps > 0$ there is $\delta > 0$ such that the following holds. If $u$ is $\delta$-flat in the ball $B_1$ and $u_k \xrightarrow{k \to \infty} u$ in $W^{1,2}(B_1)$, then for $k$ large enough there is $x_k \in \sing u_k \cap B_\eps$ such that $u_k$ is $\eps$-flat in the ball $B_{1-\eps}(x_k)$. 
\end{proposition}

\begin{proof}
Choose $\eps'(n,\eps) > 0$ small enough, more precisely such that 
\[
\eps' < \eps/2, 
\quad 
(1-2\eps')^{2-n} (\Theta + \eps/2) \le \Theta + \eps.
\]
By taking $\delta$ small enough, we may assume by Theorem~\ref{th:mis-all} that
\[
\sing u \cap B_{1-\eps'/2} \subset B_{\eps'/2} (L) 
\]
for some $(n-3)$-dimensional linear plane $L$. 
Since singular points converge again to singular points, Theorem~\ref{th:ALs2s}, we have for all large $k$,
\begin{equation}\label{eq:ukclosetoL}
\sing u_k \cap B_{1-\eps'} \subset B_{\eps'/2} (L) 
\end{equation}
By \cite[Proposition 4.6]{SU1}, we have locally uniform convergence outside the singular set, and thus
\[
u_k \rightrightarrows u \quad \text{ in } B_{1-\eps'} \setminus B_{\eps'/2}(L).
\] 
In particular, $u_k$ and $u$ restricted to $L^\perp \cap \partial B_{1/2}$ have the same homotopy type for large $k$.

By Lemma~\ref{lem:mis-forced-sing}
\[
 L \cap B_{1-2\eps'} \subset  \pi_L(\sing u_k \cap B_{1-\eps'}).
\]
Combined with \eqref{eq:ukclosetoL} this means that $u_k$ has many singular points near $L$. Since $\H^{n-3}$-a.e. singular point lies in $\sing_* u$ (see \eqref{eq:Sjdim}), we find $x_k \in \sing_* u_k$ with $|x_k| \le \frac{1}{2}\eps'$. In particular, we already have $\theta_{u_k}(x_k,0) = \Theta$, by Corollary~\ref{co:uniquetangent}. 

The last condition to show is
$\theta_{u_k}(x_k,1-\eps) \le \Theta + \eps$. By strong convergence, for large enough $k$,
\[
\int_{B_{1-\eps'}} |\nabla u_k|^2 \le \eps/4 + \int_{B_1} |\nabla u|^2.
\]
Thus 
\[
(1-2\eps')^{2-n} \int_{B_{1-2\eps'}(x_k)} |\nabla u_k|^2 
\le (1-2\eps')^{2-n} \left( \eps/4 + \int_{B_1} |\nabla u|^2 \right) 
\le (1-2\eps')^{2-n} (\Theta + \delta + \eps/4), 
\]
which does not exceed $\Theta + \eps$ if only $\delta \le \eps/4$. By the monotonicity formula, we conclude that $\theta_{u_k}(x_k,1-\eps) \le \theta_{u_k}(x_k,1-2\eps') \le \Theta + \eps$ and hence that $u_k$ is $\eps$-flat in the ball $B_{1-\eps}(x_k)$.
\end{proof}

\subsection{Local case}
\label{sec:stab-local}

The lemma below can be thought of as a local version of the stability theorem. It says that perturbing the tangent map $\Psi$ a little does not change the size of the singular set much. 

\begin{lemma}
\label{lem:local-stability}
For each $\eps > 0$ there is $\delta > 0$ such that the following is true. If $u \colon B_{80} \to \S^2$ is energy minimizing and $\delta$-flat in $B_{80}$ (see Definition \ref{df:mis-flatness}), then 
\[
(1-\eps) \omega_{n-3} \le \H^{n-3}(\sing u \cap B_1) \le (1+\eps) \omega_{n-3}. 
\]
Here $\omega_{n-3} = \H^{n-3}(\sing \Psi \cap B_1) $ is the volume of the $(n-3)$-dimensional ball.
\end{lemma}

It is natural that in order to conclude the right estimate on $\B_1$, one needs to make assumptions on a larger ball. The ball $\B_2$ would be enough here, but working with $\B_{80}$ saves us from an additional covering argument. 

\begin{proof}
%While the ball $B_2$ is indeed enough here, we will work with $B_{40}$ which saves us from an additional covering argument. 

%\textsc{Step 1.} 
The lower bound follows from a simple topological argument. Fix $\eps' = \frac{\eps}{n-2}$, then apply Theorem~\ref{th:mis-all} to find that there is an $(n-3)$-dimensional linear plane $L$ such that 
\[
L \cap B_{1-\eps'} \subset \pi_L( \sing u \cap B_1 ),
\]
provided $\delta$ is small enough. Since the orthogonal projection $\pi_L$ is $1$-Lipschitz, this shows 
\[
\H^{n-3}(\sing u \cap B_1) \ge \H^{n-3}(L \cap B_{1-\eps'}) = (1-\eps')^{n-3} \omega_{n-3} \ge (1-\eps) \omega_{n-3}. 
\]

%\textsc{Step 2.} 
A rough upper bound follows from Naber and Valtorta's work \cite{NabVal17}, namely Corollary~\ref{co:NabVal-meas-bound},
\begin{equation}
\label{eq:weak-meas-bound}
\H^{n-3}(\sing u \cap B_r(z)) \le C(n) r^{n-3}
\end{equation}
for each ball $B_{2r}(z) \subset B_{2}$.

%\textsc{Step 3.} 
To obtain the sharp upper bound, we will follow the general outline of Naber and Valtorta's work \cite[Sec.~1.4]{NabVal17}. When the target manifold is $\S^2$, the original reasoning can be made significantly easier due to topological control of singularities (analyzed in \cite{Mis18}). In particular, we we will be able to apply Rectifiable Reifenberg Theorem~\ref{th:NabVal-reifenberg} to the whole singular set in $B_1$, without decomposing it into many pieces. 

With $\delta_1 > 0$ to be fixed later, by Theorem~\ref{th:mis-all} we can choose $\delta$ small enough so that all singular points in $B_{40}$ lie in the top-dimensional part $\sing_* u$, moreover $u$ is also $\delta_1$-flat in each ball $B_{r}(z)$ with $z \in \sing u \cap B_{40}$ and $0 < r \le 20$. 

We can now apply the $L^2$-best approximation Theorem~\ref{th:NabVal-best-approx} on these balls; for simplicity, we consider the ball $B_{10}$ first. By Theorem~\ref{th:mis-all}, $u$ is $W^{1,2}$-close to a map of the form $\overline{\Psi} = \Psi \circ q$ (with $\Psi$ as in \eqref{eq:def-psi} and some linear isometry $\tau$). Note that $\overline{\Psi}$ lies in $\mathrm{sym}_{n,0}$ and the value 
\[
\eps_0 \coloneqq \dist_{L^2(B_{10})} (\overline{\Psi}, \ \mathrm{sym}_{n,k+1}) > 0
\]
depends only on the dimension $n$ (not on the choice of $\tau$). Hence, by taking $\delta_1$ small enough we can ensure that 
\begin{align*}
\dist_{L^2(B_{10})} (u, \ \mathrm{sym}_{n,0}  ) & \le \delta_2, \\
\dist_{L^2(B_{10})} (u, \ \mathrm{sym}_{n,k+1}) & \ge \eps_0 / 2
\end{align*}
with $\delta_2=\delta_2(\eps_0)$ chosen according to Theorem~\ref{th:NabVal-best-approx}. Then we obtain 
\[
\beta (0,1)^2
\le C(n) \int_{B_1} \left( \theta_u(y,8) - \theta_u(y,1) \right) \dd \mu(y),
\]
where $\mu \coloneqq \H^{n-3} \llcorner \sing u$ and $\beta = \beta_{\mu,n-3,2}$. Similarly, 
\begin{equation}
\label{eq:beta-estimate}
\beta (z,s)^2
\le C(n) s^{-(n-3)} \int_{B_s(z)} \left( \theta_u(y,8s) - \theta_u(y,s) \right) \dd \mu(y) 
\end{equation}
for each ball $B_s(z) \subset B_2$ with $z \in \sing u$. To see this, one simply needs to consider the rescaled map $\overline{u}(x) = u(z+rx)$ and apply scaling-invariance of $\delta$-flatness and $\beta$-numbers. 

%In the previous step we already noted a weak upper bound on $\mu$, which makes the following direct calculation possible. 

Now we verify the hypotheses of Rectifiable Reifenberg Theorem~\ref{th:NabVal-reifenberg}. Fix a ball $B_r(x) \subset B_2$; we only need to check that 
\begin{equation}
\label{eq:jones-estimate}
\int_{B_r(x)} \int_0^r \beta(z,s)^2 \frac{\dd s}{s} \dd \mu(z) \le \delta_3 r^{n-3} 
\end{equation}
with $\delta_3(\eps) > 0$ chosen according to Theorem~\ref{th:NabVal-reifenberg}. 

First, we integrate the estimate \eqref{eq:beta-estimate} over $B_r(x)$ and exchange the order of summation: 
\begin{align*}
\int_{B_r(x)} \beta(z,s)^2 \dd \mu(z) 
& \lesssim s^{-(n-3)} \int_{B_r(x)} \int_{B_s(z)} (\theta_u(y,8s)-\theta_u(y,s)) \dd \mu(y) \dd \mu(z) \\
& \le s^{-(n-3)} \int_{B_{2r}(x)} \int_{B_s(y)} (\theta_u(y,8s)-\theta_u(y,s)) \dd \mu(z) \dd \mu(y) \\
& \lesssim \int_{B_{2r}(x)} (\theta_u(y,8s)-\theta_u(y,s)) \dd \mu(y)
\end{align*}
Note that in the last step we used the weak upper bound \eqref{eq:weak-meas-bound} on the ball $B_s(y)$. 

When the above is integrated with respect to $s$, we obtain a telescopic sum. In order to estimate it, first recall that $u$ is $\delta_1$-flat in each ball $B_{8r}(y)$ such that $y \in \sing u$ $B_r(y) \subset B_2$, in particular 
\[
\theta_u(y,8r) - \theta_u(y,0) \le \delta_1 
\]
on the support of $\mu$. Thus, the substitution $s \mapsto 8s$ together with monotone convergence $\theta_u(y,s) \searrow \theta_u(y,0)$ give us 
\begin{align*}
\int_0^r (\theta_u(y,8s)-\theta_u(y,s)) \frac{\dd s}{s} 
& = \int_{r}^{8r} (\theta_u(y,s) - \theta_u(y,0)) \frac{\dd s}{s} \\
& \le \ln(8) \delta_1.
\end{align*}
Now we are ready to combine the above estimates: 
\begin{align*}
\int_{B_r(x)} \int_0^r \beta_{\mu,2}^2(z,s) \frac{\dd s}{s} \dd \mu(z) 
& \lesssim \int_0^r \int_{B_{2r}(x)} (\theta_u(y,8s)-\theta_u(y,s)) \dd \mu(y) \frac{\dd s}{s} \\
& \le \int_{B_{2r}(x)} \ln(8) \delta_1 \dd \mu(y) \\
& \lesssim \delta_1 r^{n-3},
\end{align*}
where we used \eqref{eq:weak-meas-bound} again in the last line. Assuming $\delta_1 \le \delta_3(\eps) / C(n)$, we have verified the assumption \eqref{eq:jones-estimate} and we infer the upper estimate 
\[
\H^{n-3}(\sing u \cap B_1) = \mu(B_1) \le (1+\eps) \omega_{n-3}. 
\] 
%$\mu(B_r(x)) \le (1+\eps)\omega_{n-3} r^{n-3}$ on each ball $B_r(x) \subset B_1$, in particular 

\end{proof}

\subsection{Global case}
\label{sec:stab-global}
The idea of the proof is to cover most of $\sing u$ by good balls, on which $u$ is $\delta$-flat and thus the measure of $\sing u$ is controlled by Lemma \ref{lem:local-stability}. The rest of the singular set is to be covered by bad balls, whose total mass is small. To achieve this, we will need the following simple covering lemma. 

\begin{lemma}
\label{lem:easy-covering}
Let $S \subseteq \R^n$ be a compact set of finite $\cH^{k}$-measure and let $\cB$ be a~family of open balls such that for each point $p \in S$, all small enough balls around $p$ belong to $\cB$. Then, given any $\eps > 0$, $S$ can be covered by the union of two finite families of open balls $\good$, $\bad$, where $\good \subseteq \cB$ consists of pairwise disjoint balls and $\bad = \B_{r_j}(p_j)$ is a small family in the sense that 
%For $k \geq 0$, let $S \subset \R^n$ be a compact set of finite $\cH^{k}$-measure. 
%
%Let $\cB$ be a family of open balls with the following property:
%For all $p \in S$ there exists $r(p) > 0$ such that $B_r(p) \in \cB$ for all $r < r(p)$.
%
%Then, given any $\eps > 0$, $S$ can be covered by the union of families of open balls $\good$, $\bad$, where $\good \subset \cB$ consists of finitely many pairwise disjoint balls and $\bad = B_{r_j}(p_j)$ is a countable family which is small in the sense that 
\begin{equation}
\label{eq:cov:smallness}
\sum_j r_j^{k} \le \eps. 
\end{equation}
\end{lemma}

\begin{proof}
One way to construct this covering is by using Vitali's covering theorem for Radon measures (e.g., \cite[Theorem 2.8]{Mat95}). Applying it to the measure $\mu \coloneqq \cH^{k} \llcorner S$, we obtain a~countable family of pairwise disjoint balls $\cA = \left\{ \overline{B_{r_s}(p_s)} \right\}$, covering $\mu$-almost all $S$ and satisfying $B_{2r_s}(p_s) \in \cB$ for each $s$. Since the measure $\mu$ is finite, we can divide $\cA$ into two subfamilies $\good'$, $\bad'$, where $\good'$ is finite and $\bad'$ is small, i.e., $\mu \left( \bigcup \bad' \right) \le \eps$. To obtain the desired properties, we still need to alter these families a little. 
% series $\sum_s \mu(B_{r_s}(p_s))$ converges

First, we define $\good$ to be the balls of $\good'$ slightly enlarged to open balls, but still pairwise disjoint and still belonging to $\cB$. 

Now, the remaining part $S \setminus \bigcup \good$ is a compact set and 
\[
\mu \left( S \setminus \bigcup \good \right) \le \mu \left( \bigcup \bad' \right) \le \eps.
\]
By definition of Hausdorff measure, this set can be covered by a finite family of open balls $\bad$ satisfying the smallness condition \eqref{eq:cov:smallness}. 
\end{proof}

\begin{proof}[Proof of Theorem~\ref{th:global-stability}]

Fix $\varepsilon > 0$. For the sake of clarity, we focus on showing that the difference $| \H^{n-3}(\sing u_k) - \H^{n-3}(\sing u) |$ is controlled by $\eps$ for $k$ large enough. The estimate for Wasserstein distance follows the same lines; it is briefly discussed at the end of the proof. 

\textsc{Step 1 (boundary regularity).}
Choose $\varepsilon_0 > 0$ according to the boundary regularity theorem, Theorem~\ref{th:r4s}. Fix $\rho > 0$ such that 
\[
\sup_{x \in \partial \Omega} \int_{B_\rho(x)} |\nabla \varphi|^{n-1} \le \varepsilon_0 / 2.
\]
Then $u$ is smooth in a $\lambda \rho$-neighborhood of $\partial \Omega$. By strong convergence of $\varphi_k$ to $\varphi$ in $W^{1,n-1}(\partial \Omega)$, we may assume w.l.o.g. for all $k \in \N$,
\[
\sup_{k} \sup_{x \in \partial \Omega} \int_{B_\rho(x)} |\nabla \varphi_k|^{n-1} \le \varepsilon_0.
\]
As a consequence, we may assume each $u_k$ is also smooth in the same fixed neighborhood of $\partial \Omega$. 

\textsc{Step 2 (covering the low-dimensional part).}
Recall the stratification, Section~\ref{s:tangentmaps},
\[
S_0 \subset \ldots \subset S_{n-4} \subset S_{n-3} = \sing u,
\]
in which the $k$-th stratum $S_k$ has Hausdorff dimension $k$ or smaller. We will consider separately the set $S_{n-4}$ and the top-dimensional part 
\[
\sing_* u \coloneqq S_{n-3} \setminus S_{n-4}.
\]
Since $\sing u$ is compact and $\sing_* u$ is an open subset of $\sing u$ (see Theorem~\ref{th:mis-all}), $S_{n-4}$ is also compact. At the same time, it has a uniform distance from $\partial \Omega$ and $\H^{n-3}(S_{n-4})=0$, so it can be covered by a finite family $\bad_1 = \{ B_{r_i}(p_i) \}$ of open balls satisfying the smallness condition \eqref{eq:cov:smallness} 
\[
\sum_i r_i^{n-3} \le \varepsilon 
\]
and such that $B_{2r_i}(p_i) \subset \Omega$ for each $i$. 
% \cite[Cor.~1.5]{Mis18} not necessary

%by uniform boundedness Theorem~\ref{th:uniformboundedness} and Naber-Valtorta's result Theorem~\ref{th:NabVal}, $u$ has the following estimate for the singular set: 
On each such ball Corollary~\ref{co:NabVal-meas-bound} yields $\H^{n-3}(\sing u \cap B_{r_i}(p_i)) \le C r_i^{n-3}$, with $C$ depending only on the dimension $n$. Summing over all balls, we obtain 
\[
\H^{n-3} \left( \sing u \cap \bigcup \bad_1 \right) 
\le C \eps. 
\]
The same estimate holds verbatim for each $u_k$, by the same application of Corollary \ref{co:NabVal-meas-bound}. 
% since Corollary~\ref{co:NabVal-meas-bound} is again applicable.

\textsc{Step 3 (covering the top-dimensional part and estimating $\H^{n-3}(\sing u)$).}
Here, we use the covering lemma (Lemma \ref{lem:easy-covering}) for the set $S \coloneqq \sing u \setminus \bigcup \bad_1$. Thanks to Step 1, $\sing u$ has positive distance from the boundary, so it is a compact set of finite $\cH^{n-3}$-measure due to Corollary \ref{co:NabVal-meas-bound}. We choose $\cB$ to be 
\[
\cB = \left \{ B_r(p) \colon \quad p \in \sing_* u, \text{ $u$ is $\delta$-flat in $B_{81r}(p)$} \right \},
\]
where $\delta(\eps) > 0$ is chosen according to Lemma \ref{lem:local-stability}. Since $S_{n-4}$ is already covered by $\bad_1$, we know that $S \subset \sing_* u$ and hence small enough balls around each point in $S$ lie in $\cB$ by Lemma \ref{lem:mis-some-flatness}. 

Having checked the properties required by Lemma \ref{lem:easy-covering}, we can cover $S$ by the union of a~finite disjoint family $\good \subset \cB$ and another finite family $\bad_2$ satisfying \eqref{eq:cov:smallness}. We add the latter to $\bad_1$ to obtain the family of bad balls $\bad \coloneqq \bad_1 \cup \bad_2$, which still satisfies the smallness condition \eqref{eq:cov:smallness}. 

Repeating the reasoning from Step 2, we have again via Corollary~\ref{co:NabVal-meas-bound},
\begin{equation}
\label{eq:comparison-bad-balls}
\H^{n-3} \left( \sing u \cap \bigcup \bad \right) 
\le 2C \eps, 
\quad  
\H^{n-3} \left( \sing u_k \cap \bigcup \bad \right) 
\le 2C \eps 
\quad \text{for all } k.
\end{equation}

By assumption, the map $u$ is $\delta$-flat in $B_{80r_s}(p_s)$ for each ball $B_{r_s}(p_s) \in \good$. By Lemma~\ref{lem:local-stability}, we now obtain 
\[
(1-\eps) \omega_{n-3} r_s^{n-3} 
\le \H^{n-3}(\sing u \cap B_{r_s}(p_s)) 
\le (1+\eps) \omega_{n-3} r_s^{n-3} 
\]
for each $s$. To finish the proof, we need to show that a similar comparison holds for $u_k$ if $k$ is large.

\textsc{Step 4 (estimating $\H^{n-3}(\sing u_k)$).} Since $u_k \to u$ in $W^{1,2}(\Omega)$ and $\sing u$ is covered by the open families $\good,\bad$, Theorem~\ref{th:ALs2s} (singular points converge to singular points) implies that the same holds for $u_k$ if $k$ is large enough (from now on we assume it is). For bad balls, the rough estimate \eqref{eq:comparison-bad-balls} will be enough, so we focus on good balls. 

By Proposition~\ref{prop:stability-of-flatness}, we can assume (by taking $k$ large and $\delta$ small) that for each $B_{r_s}(p_s) \in \good$ there is $p^k_s \in \sing u_k$ such that $|p^k_s-p_s| \le \eps r_s$ and $u_k$ is $\delta'$-flat in the ball $B_{80(1+\eps)r_s}(p^k_s)$. Here, the value of $\delta'$ is chosen to be $\delta(\eps)$ from Lemma~\ref{lem:local-stability}. 

Applying Lemma~\ref{lem:local-stability} to $u_k$ on balls $B_{(1-\eps)r_s}(p^k_s)$ and $B_{(1-\eps)r}(p^k_s)$, we obtain  
\begin{align*}
(1-\eps)^{n-2} \omega_{n-3} r_s^{n-3} 
& \le \H^{n-3}(\sing u_k \cap B_{(1-\eps)r_s}(p^k_s)) \\
& \le \H^{n-3}(\sing u_k \cap B_{r_s}(p_s)) \\
& \le \H^{n-3}(\sing u_k \cap B_{(1+\eps)r_s}(p^k_s)) \\
& \le (1+\eps)^{n-2} \omega_{n-3} r_s^{n-3}, 
\end{align*}
which is only slightly worse that the estimate for $\H^{n-3}(\sing u)$.

\textsc{Step 5 (comparison).}
Recalling that $\good$ is a disjoint family, we can sum the above estimate over all $s$ to obtain 
\[
(1-\eps)^{n-2} A 
\le \H^{n-3}(\sing u_k \cap \bigcup \good) 
\le (1+\eps)^{n-2} A,  
\]
where $A \coloneqq \sum_s \omega_{n-3} r_s^{n-3}$. 
%we will show both $\H^{n-3}(\sing u_k)$ and $\H^{n-3}(\sing u_k)$ are close to this value.
Combining it with the estimate for bad balls \eqref{eq:comparison-bad-balls}, we finally obtain 
\[
(1-\eps)^{n-2} A 
\le \H^{n-3}(\sing u_k)
\le (1+\eps)^{n-2} A + 2C \eps.
\]
Exactly the same estimate is true for $u$. Combining these two yields 
\begin{align*}
\big| \H^{n-3}(\sing u_k) - \H^{n-3}(\sing u) \big| 
& \le \left( (1+\eps)^{n-2} - (1-\eps)^{n-2} \right) A + 2C \eps \\
& \le \left( \frac{(1+\eps)^{n-2}}{(1-\eps)^{n-2}} - 1 \right) \H^{n-3}(\sing u) + 2C \eps. 
\end{align*}
Evidently the right-hand side tends to zero when $\eps \to 0$, which ends the proof of stability of $\cH^{n-3}(\sing u)$. 

\textsc{Step 6 (Wasserstein distance estimate).} With just a little bit more care, the Wasserstein distance estimate follows. Let us decompose the measure $\mu \coloneqq \cH^{n-3} \llcorner \sing u$ into $\mu = \mu_b + \sum_s \mu_s$, where 
\[
\mu_b = \mu \llcorner \left( \bigcup \bad \setminus \bigcup \good \right), \quad \mu_s = \mu \llcorner B_{r_s}(p_s) \quad \text{for each ball } B_{r_s}(p_s) \in \good.
\]
The estimate for $\mu_b$ is simply $d_W(\mu_b, 0) \le \mu \left ( \bigcup \bad \right) \le 2 C \eps$, whereas on each good ball $B_{r_s}(p_s)$ we have the inequalities 
\begin{align*}
\int_{\R^n} h \dd \mu_s - \omega_{n-3} r_s^{n-3} h(p_s) 
& = \int_{B_{r_s}(p_s)} (h - h(p_s)) \dd \mu + (\mu(B_{r_s}(p_s)) - \omega_{n-3} r_s^{n-3}) h(p_s) \\
& \le r_s \mu(B_{r_s}(p_s)) + |\mu(B_{r_s}(p_s)) - \omega_{n-3} r_s^{n-3}| \\
& \le (r_s + 2\eps) \omega_{n-3} r_s^{n-3}. 
\end{align*}
for any function $h \colon \R^n \to \R$ satisfying $|h| \le 1$ and $|\nabla h| \le 1$. Thus $d_W(\mu_k, \omega_{n-3} r_s^{n-3} \delta_{p_s}) \le 3\eps \omega_{n-3} r_s^{n-3}$, if only each radius is smaller than $\eps$. By triangle inequality, $d_W(\mu,\nu) \le 3 \eps A + 2 C \eps$, where $\nu = \sum_s \omega_{n-3} r_s^{n-3} \delta_{p_s}$ is the packing measure associated to $\good$ and once again $A = \nu(\R^n)$. Applying the same reasoning to $u_k$, we conclude as before. 
\end{proof}

\subsection{The case $n=3$}
We close this section with the proof in the special case when the domain is three dimensional. In this case, using the results of previous sections and adapting the arguments of Hardt and Lin from \cite{HL1989} the proof is quite quick.

The counter-example by Strzelecki and the first-named author in \cite{MazowieckaStrzelecki} implies that there is no stability result for $W^{1,p}$ with $p < 2$ perturbations of the boundary. In this sense the following Theorem~\ref{th:stability} is the sharp limit case. 

\begin{theorem}\label{th:stability}
Let $\Omega \subset \R^3$ be a bounded domain with a $C^1$-boundary.  

Assume that $v \in W^{1,2}(\Omega,\S^2)$ is the {unique} minimizing harmonic map with boundary data $v\rvert_{\partial \Omega} = \psi$ and let $\psi\in W^{s,p}(\partial\Omega,\S^2)$, where 
$sp = 2$, $s\in (\frac 12,1]$, $p\in [2,\infty)$.

Then for any $\eps > 0$ there is a $\delta = \delta(\eps,\Omega,\psi) > 0$ such that whenever $u$ is a minimizing harmonic map $u \in W^{1,2}(\Omega,\S^2)$ with trace $\varphi \coloneqq u \Big |_{\partial \Omega}$ close to $\psi$,
\begin{equation}\label{eq:h12smallcond2}
[\psi-\varphi]_{W^{s,p}(\partial \Omega)} \leq \delta 
%[\psi-\varphi]_{W^{1,2}(\partial \Omega)} \leq \delta
\end{equation}
then $u$ has the same number of singularities as $v$. Moreover,
\begin{equation}\label{eq:umvsmall}
\|u - v\|_{W^{1,2}(\Omega)} \leq \eps.
\end{equation}
\end{theorem}

% The statement \eqref{eq:umvsmall} actually follows from weaker assumptions.
% \begin{proposition}\label{pr:tcmc}
% Let $\Omega \subset \R^3$ be a bounded domain with smooth boundary.
% %, and $s \in (\frac{1}{2},1]$, $p \in [2,\infty)$. 
% Assume that $v \in W^{1,2}(\Omega,\S^2)$ is the {unique} minimizing harmonic map with boundary $v\rvert_{\partial\Omega}=\psi$. 
% Then for any $\eps > 0$ there is $\delta = \delta(\eps,\Omega,\psi) > 0$ such that whenever $u$ is a minimizing harmonic map $u \in W^{1,2}(\Omega,\S^2)$ with trace $\varphi \coloneqq u \Big |_{\partial \Omega}$ satisfying 
% \[
% [\psi-\varphi]_{W^{1,2}(\partial \Omega)} \leq \delta, 
% \]
% then
% \[
% \|u - v\|_{W^{1,2}(\Omega)} \leq \eps.
% \]
% \end{proposition}
\begin{proof}
We first prove the statement~\eqref{eq:umvsmall}. Assume the claim is false for a given unique minimizer $v$ and for some $\eps > 0$. Then we find a sequence of minimizers $u_i$ with traces $\varphi_i$ which satisfy 
\[
[\varphi_i-\psi]_{W^{s,p}(\partial \Omega)}^p \leq \frac{1}{i} 
\]
but
\begin{equation}\label{eq:minimizersfar}
\|u_i - v\|_{W^{1,2}(\Omega)} > \eps.
\end{equation}
Now we obtain a contradiction since by strong convergence of minimizers, \Cref{th:bsc}, the sequence $u_i$ converges to the unique minimizer $v$ in $W^{1,2}$. In particular \eqref{eq:minimizersfar} cannot be true for all $i \in \N$.

Regarding the same number of singularities we recall that by \Cref{co:fewsg} for boundary data in $W^{s,p}(\partial \Omega)$ with $sp=2$ and $s>\frac12$ there can only be finitely many singularities. Let us assume that the theorem is false for a unique minimizer $v$ which has exactly $N < \infty$ singularities $x_1,\ldots,x_N$.

Then we find a sequence $u_i \in W^{1,2}(\Omega,\S^2)$ of minimizing harmonic maps with traces $\varphi_i \coloneqq u_i \Big |_{\partial \Omega}\in W^{s,p}(\partial \Omega, \S^2)$ with 
\begin{equation}\label{eq:stability3Dboundarysmallness}
 [\psi - \varphi_i]_{W^{s,p}(\partial \Omega)}^2 \le \frac{1}{i}
\end{equation}
and such that either all $u_i$ have $M < N$ singularities $(y_{i,k})_{k=1}^M$ or all $u_i$ have at least $N+1$ singularities $(y_{i,k})_{k=1}^{N+1}$.

From the strong convergence of minimizing harmonic maps, 
\Cref{th:bsc}, and the uniqueness of $v$,  we may assume, up to a subsequence, that
\[
u_i \to v \quad \mbox{in $W^{1,2}(\Omega,\S^2)$}.
\]

If each $u_i$ had $M < N$ singularities we find a contradiction to Theorem~\ref{th:ALs2sii}, since all the singularities of $v$ have to come as limits of singularities of $u_i$.

So we may assume that each $u_i$ has at least $M>N$ singularities. Since, by Theorem~\ref{th:ALs2s}, singularities of $u_i$ which do not approach the boundary $\partial \Omega$ converge to singularities of $v$, and two different singularities of $u_i$ cannot converge to the same singularity of $v$ by uniform distance (proportional to the distance of the boundary) of singularities, Theorem~\ref{th:udsg}, the only way this is possible is if singularities of $u_i$ approach the boundary $\partial \Omega$. 

However, we can rule this out simply by the assumption~\eqref{eq:h12smallcond2}. Since $\psi\in W^{s,p}(\partial \Omega,\S^2)$ we may find a $\varrho>0$ such that 
\[
 \sup_{x_0\in\partial\Omega} [\psi]_{W^{s,p}(\partial \Omega\cap B_\rho(x_0))}^p < \frac{\eps}{2^p},
\]
where $\eps$ is as in the uniform boundary regularity Theorem~\ref{th:r4s}. By \eqref{eq:stability3Dboundarysmallness} we have for $\varphi_i$ 
\[
\sup_{x_0\in\partial\Omega} [\vp_i]_{W^{s,p}(\partial \Omega\cap B_\rho(x_0))}^p < 2^{p-1}\brac{\frac\eps
2 + \frac1i}.
\]
Thus for sufficiently large $i$ the singularities of $u_i$ and $v$ cannot approach the $\lambda\rho$-neighborhood of the boundary, where $\lambda$ is a uniform constant. 
\end{proof}

\section{Almgren and Lieb's linear law: size of the singular set}\label{sec:AL}

Here we obtain a higher-dimensional counterpart and at the same time a sharpened version of Almgren--Lieb's linear estimate on the number of singularities. 
Let us stress that the fundamental result that makes such estimates possible is Naber and Valtorta's interior measure bound (Corollary~\ref{co:NabVal-meas-bound}). 

\begin{theorem}
\label{th:almgren-lieb-high}
Let us assume that the fundamental group of $\mathcal N$ is finite. Let $\Omega\subset \R^n$ be a bounded domain with a $C^1$ boundary and let $u\in W^{1,2}(\Omega,\n)$ be a minimizing map with $u\rvert_{\partial \Omega} = \varphi$. Assume that $\varphi \in W^{s,p}(\partial \Omega,\n)$ for $s\in(\frac12,1]$ and $p>1$ with $sp=n-1$. Then 
 \begin{equation}\label{eq:mainestimate}
  \H^{n-3}(\sing u) \le C [\vp]_{W^{s,p}(\partial \Omega)}^p.
 \end{equation}
\end{theorem}

\begin{remark}
 In particular, we recall that we denoted $[\vp]_{W^{1,n-1}(\partial \Omega)}^{n-1} = \int_{\partial \Omega} |\nabla \vp|^{n-1} \dhn$, thus if $\vp\in W^{1,n-1}(\partial \Omega, \n)$, then we have
 \[
  \H^{n-3}(\sing u) \le C \int_{\partial \Omega} |\nabla \vp|^{n-1} \dhn.
 \]
\end{remark}

\begin{remark}
 As shown in \Cref{ss:examples} the result is optimal in the case $n=3$ in the sense that it fails for $sp<2$.
\end{remark}

The study of singularities near the boundary involves the following covering lemma, which we here cite from \cite[Theorem 2.8, 2.9]{AlmgrenLieb1988}. 

\begin{theorem}[Covering lemma]
\label{th:covering}
Let $\mathcal B$ be a family of closed balls in $\R^n$, $\mu$ be a Borel measure over $\R^n$, and let $\tau,\omega \in (0,1)$. Moreover, assume that the following two hypotheses hold: 
\begin{enumerate}
 \item For any two different $B_r(p), B_s(q) \in \mathcal{B}$ we have 
 \[
  |p-q| \ge \omega \min(r,s).
 \]
 \item Suppose that $B_r(p)\in \cB$ and $q\in\R^n$ is an arbitrary point, then 
 \[
  \mu \left( B_r(p) \setminus B_{\tau r}(q) \right) \ge 1.
 \]
\end{enumerate}
Then
\[
\# \text{balls in }\mathcal{B} \le C \mu(\R^n), 
\]
for a constant $C(\omega,\tau,n) > 0$. 
\end{theorem}

\begin{proof}[Proof of \Cref{th:almgren-lieb-high}]
Choose $\sigma > 0$ (depending on the geometry of $\partial \Omega$) according to \Cref{th:int-regularity-in-terms-of-bdry} and \Cref{th:hot-spots}. We first estimate the measure of the set 
\[
 A_1 \coloneqq \{ z \in \sing u\colon  r(z) \le \sigma \}, 
 \quad \text{where } r(z) = \tfrac 12 \dist(z,\partial \Omega),  
\]
which is covered by balls $B_{r(z)}(z)$. Then choose a Vitali subcovering such that the balls $B_{r_j}(z_j)$ cover $A_1$ and the balls $B_{r_j/5}(z_j)$ are disjoint; let $\cB$ be the family of balls $B_{r_j/\lambda}(z_j)$ with $\lambda$ as in \Cref{th:hot-spots}. The first condition from \Cref{th:covering} with $\omega = \lambda/5$ follows: for any two distinct balls in our collection we have 
\[
 |z_i-z_j| \ge \tfrac 15 (r_i+r_j) \ge \tfrac{\lambda}{5} \max(r_i/\lambda,r_j/\lambda).
\]

Now for an open $U\subset\R^n$ we define the Borel measure $\mu$ as follows:

\underline{\textsc{Case 1:} $s\in(\frac12,1)$, $p>1$, $sp=n-1$}

\begin{equation}\label{eq:fractionalmuchoice}
 \mu(U) \coloneqq \frac1\eps \int_{U\cap\partial \Omega} \int_{\partial \Omega} \frac{|\vp(x) - \vp(y)|^p}{|x-y|^{2n-2}} \, dx \, dy.
\end{equation}
Obviously, $\mu$ is a measure and we have
\[
 \mu(U) \ge \frac1\eps \int_{U\cap \partial \Omega} \int_{U\cap \partial \Omega} \frac{|\vp(x) - \vp(y)|^p}{|x-y|^{2n-2}} \, dx \, dy.
\]

\underline{\textsc{Case 2:} $s=1$, $p=n-1$}

\[
 \mu = \frac 1 \eps |\nabla \varphi|^{n-1} \, \cH^{n-1} \llcorner \partial \Omega, 
 \quad \text{i.e., } 
 \mu(U) = \frac 1 \eps \int_{\partial \Omega \cap U} |\nabla \varphi|^{n-1} \dd \cH^{n-1}.
\]
In both cases $\eps > 0$ is the constant from Theorem~\ref{th:hot-spots}. If we set $\tau = \lambda^2$, then the second condition of Theorem~\ref{th:covering} with $k = n-3$ follows from Theorem~\ref{th:hot-spots} and we infer that 
\[
% \sum_j r_j^{n-3}
\# \cB \le C [\vp]_{W^{s,p}(\partial \Omega)}^p. 
\]
On each ball $B_{r_j}(z_j)$ Corollary~\ref{co:NabVal-meas-bound} implies $\H^{n-3}(\sing u \cap B_{r_j}(z_j)) \le C r_j^{n-3} \le C(\Omega)$. Summing over all balls, we obtain 
\[
\H^{n-3}(A_1) \le C [\vp]_{W^{s,p}(\partial \Omega)}^p. 
\]

Next we estimate the set 
\[
 A_2 \coloneqq \{ z \in \sing u\colon  r(z) \ge \sigma \}.
\]
For each ball $B_\sigma(y)$ with $\dist(y,\partial \Omega) \ge 2 \sigma$ we have a bound $\H^{n-3}(\sing u \cap B_\sigma(y)) \le C \sigma^{n-3}$ by Corollary~\ref{co:NabVal-meas-bound}. The set $A_2$ can be covered by finitely many such balls (the number of balls depending only on $\sigma$ and the geometry of $\Omega$), which gives us an estimate 
\[
 \H^{n-3}(A_2) \le C_0. 
\]
Taking $C_0$ as above and $\eps$ as in Theorem~\ref{th:int-regularity-in-terms-of-bdry}, we have two possibilities. Either the smallness condition $[\vp]_{W^{s,p}(\partial \Omega)}^p \le \eps$ is satisfied and $\cH^{n-3}(A_2) = 0$ follows, or 
\[
\cH^{n-3}(A_2) \le C_0 \le \frac{C_0}{\eps} [\vp]_{W^{s,p}(\partial \Omega)}^p.
\]
In both cases, combining the estimates for $A_1$ and $A_2$ ends the proof.
\end{proof}

\section{Final remarks}\label{s:finalremarks}
\subsection{Examples}\label{ss:examples}
We present two examples that show that the linear law we obtain in dimension $n=3$ is sharp. Our examples will be based on the following method of installing singular points from \cite{AlmgrenLieb1988}, see also \cite[Section 2]{MazowieckaStrzelecki}.

\begin{definition}\label{def:singinstal}
	Let $\psi\colon \S^2\rightarrow\S^2$ be smooth near $y\in\S^2$ and let $r>0$ be a fixed number. We denote by $\langle\psi\rangle_{y,r}\colon\S^2\to\S^2$ a smooth boundary map which arises from $\psi$ by a small deformation in a neighborhood of $y$ so that the following conditions are satisfied:
	\begin{enumerate}
[label=(\alph*)]
	  \item $\langle\psi\rangle_{y,r}(x)=\psi(x)$ whenever $|x-y|\ge r$;
	  \item $\langle\psi\rangle_{y,r}(x)\equiv\psi(y)$ if $|x-y|=r/2$;
	  \item \label{it:installannuli} The restriction of $\langle\psi\rangle_{y,r}$ to the annular region $\frac r 2<|x-y|<r$ satisfies the Lipschitz condition with a Lipschitz constant $L_\psi$ which depends only on $\psi$ and not on $r$;
	  \item \label{it:installenergy}$\langle\psi\rangle_{y,r}$ is a diffeomorphism of the spherical cap $\{|x-y|<r/2\} \, \cap\, \S^2$ onto the punctured sphere $\S^2\setminus\{\psi(y)\}$ such that the boundary Dirichlet integral energy of $\langle\psi\rangle_{y,r}$ on this cap equals $8\pi+\text{o}(1)$ as $r\rightarrow 0$.
	 \end{enumerate}
\end{definition}

The existence of maps described in \Cref{def:singinstal} is well known, the proof follows for example a modification of \cite[{Appendix A.2}]{Almgren-Browder-Lieb}.

\begin{theorem}[{{\cite[Theorem 4.3]{AlmgrenLieb1988}}}]\label{th:ALinstallsing}
 Suppose $u\colon B_1^3\rightarrow\S^2$ is a minimizer which is unique for its boundary mapping $\psi\colon \S^2\rightarrow\S^2$ and which has $k$ interior singularities at $x_1,\ldots,x_k\in B_1^3$. Moreover, assume that $\int_{\S^2}|\nabla \psi|^2\dh<\infty$ and that $\psi$ is smooth near $y_0\in\S^2$. Let $\psi_j\colon\S^2\rightarrow\S^2$ be any sequence of continuous boundary mappings such that $\psi_j=\langle\psi\rangle_{y_0,2/j}$ for all $j$ sufficiently large.

Finally, let $u_j$ be any minimizer in $B_1^3$ with boundary mapping $\psi_j$. Then, for all sufficiently large $j$, the mapping $u_j$ will have at least $k+1$ interior singular points $y_{0,j}$ and $x_{1,j},\ldots,x_{k,j}$ such that $y_{0,j}\rightarrow y_0$ and $x_{\ell,j}\rightarrow x_\ell$ (for each $\ell = 1, \ldots, k$) as $j\rightarrow\infty$.
\end{theorem}

%In the first example the argument is based on the fact that a weaker norm is not scaling invariant. In this example we show that there is no continuity (at $0$)  
In the first example, \Cref{la:sharpll1s}, we show that there cannot be any linear law (or a~similar result, e.g., a~power law) for boundary energies $[\varphi]_{W^{s,p}(\partial \Omega)}$ if $sp < 2$. See a similar construction in \cite[Remark 3.9]{MazowieckaStrzelecki}.

\begin{lemma}\label{la:sharpll1s}
Assume that the following holds for $0 < s \le 1$, $p \geq 1$: for every $\eps > 0$ there exists a $\delta$ such that if $u \in W^{1,2}(B_1^3,\S^2)$ is a minimizer with trace $\varphi$ and
\begin{equation}\label{eq:vpsd}
[\varphi]_{W^{s,p}(\partial B_1)} \leq \delta
\end{equation} 
then
\begin{equation}\label{eq:sigmaeps}
\# \left \{\mbox{singularities of u}\right \} \leq \eps.
\footnote{In fact, for small $\eps$ we simply conclude that $u$ has no singularities at all.}
\end{equation}
Then $sp \geq 2$.
\end{lemma}

\begin{proof}
Let $\vp_0\colon \S^2 \to \S^2$ be a constant map, which admits a constant map as the unique smooth minimizer $B_1^3 \to\S^2$. Let us fix a point $N \in \S^2$ and modify $\vp_0$ in order to insert a singularity -- consider the map $\vp_\delta = \langle\vp_0\rangle_{N,\delta}$. Then, for sufficiently small $\delta$, we have by \Cref{def:singinstal} 
\begin{equation*}
 \int_{\S^2} |\nabla \vp_\delta|^2 \dh = \int_{D_\delta(N)} |\nabla \vp_\delta|^2 \dh \le 10\pi.  
\end{equation*}
Assume to the contrary that $q := sp < 2$. Then, by H\"{o}lder's inequality we get
\begin{equation}\label{eq:W1p2normalconstant}
 \int_{\S^2} |\nabla \vp_\delta|^{q} \dh \le \brac{\int_{D_\delta(N)} |\nabla \vp_\delta|^2 \dh}^{\frac{q}{2}}|D_\delta(N)|^{1-\frac{q}{2}} \aleq \delta^{2-q}.
\end{equation}
Combining this with $\vp_\delta\in\S^2$, we obtain from Gagliardo--Nirenberg's inequality \cite{BrezisMironescu-gagliardo} (for $\theta = 1-s$) 
\begin{equation}
 [\vp_\delta]_{W^{s,p}(\S^2)} \le \|\vp_\delta\|^\theta_{L^{\infty}}\|\nabla\vp_\delta\|^{1-\theta}_{L^{q}} \aleq \delta^{\frac{2-q}{p}}.
\end{equation}
Since $q = sp < 2$, the seminorm $[\vp_\delta]_{W^{s,p}(\S^2)}$ can be made arbitrarily small and so \eqref{eq:vpsd} is satisfied. On the other hand, by \Cref{th:ALinstallsing} any minimizer with boundary data $\vp_\delta$ must have at least one singularity, thus \eqref{eq:sigmaeps} fails. 

\end{proof}

%%%%%NO IDEA HOW A PART OF THE ENERGY VANISHED
% This follows from a standard argument, e.g., \cite[Proof of Lemma~III.1, p.437]{R98} which we follow almost verbatim. Let $\varphi$ be a degree $1$ map from $\S^2$ into $\S^2$, constant in the southern hemisphere $\S^2_{-}$. Denote by $\varphi_\delta$ the map $\psi_0 \circ \tau_\delta$ where $\tau_\delta: \S^2 \to \S^2$ is the dilation relative to the north pole which sends the geodescic ball $B_\delta(N)$ into the northern hemisphere. Clearly $|\nabla \varphi_\delta| \aleq \frac{1}{\delta}$. Thus, for any $p \geq 1$,
% \[
%  \|\nabla \varphi_\delta\|_{L^p(\S^2)} \aeq \|\nabla \varphi_\delta\|_{L^p(B_\delta(N))} \aleq \delta^{\frac{2}{p}-1}.
% \]
% From Gagliardo-Nirenberg embedding we then obtain (\ToDo check exponents)
% \[
%  [\varphi_\delta]_{W^{s,p}(\S^2)} \aleq \delta^{\frac{2}{p}-s}
% \]
% So if $sp < 2$ then \eqref{eq:vpsd} can be satisfied. On the other hand, since the target manifold is $\S^2$ we may extend $\vp$ to a map $u\colon B^3\to\S^2$, but because $\varphi$ has degree one, $u$ must have at least a singularity, so \eqref{eq:sigmaeps} fails.

If we modify this example further, it is also possible to construct a minimizing harmonic maps with infinitely many singularities (with finite $W^{1,2-\eps}$-energy at the boundary).

\begin{theorem}\label{ex:sharpnessoflinearlaw}
Let $\eps>0$ be any positive number. There is a boundary map $\vp\in W^{1,2-\eps}(\partial B_1^3,\S^2)$, such that the following holds:

there is a minimizer $u\colon B_1^3\rightarrow\S^2$ with $u\big\rvert_{\partial B_1^3} = \vp$ and $u$ has countably infinitely many singularities. 
\end{theorem}

In order to prove \Cref{ex:sharpnessoflinearlaw} we will modify Almgren and Lieb's ``boiling water example'' \cite[Theorem 4.4]{AlmgrenLieb1988}.

We will need the following Lemma, which shows how we can use a small modification of the boundary data in order to guarantee that the corresponding minimizer is unique, see \cite[Theorem 3.2]{AlmgrenLieb1988} and also \cite[Lemma 3.8]{MazowieckaStrzelecki}. 
\begin{lemma}\label{le:AL32}
Given a map $\vp\in C^\infty(\S^2,\S^2)$ and a small spherical cap $D_\delta(y_0)$, one may find a new map $\tilde \vp$ which differs from $\vp$ only on $D_\delta(y_0)$, $\|\vp - \tilde \vp\|_{W^{1,2}(\partial B^3)}< 10 \delta$ and -- most importantly -- there exists exactly one minimizer $\tilde u\colon B_1^3 \to \S^2$ with $\tilde u \big\rvert_{\partial B_1^3} = \tilde \vp$.
%Given any $\vp\in C^\infty(\S^2,\S^2)$, one may modify $\vp$ on a small spherical cap $D_\delta(y_0)$, where $y_0\in\S^2$ is any point on the sphere, so that the new map $\tilde \vp = \vp$ outside of $D_\delta$ and $\|\vp - \tilde \vp\|_{W^{1,2}(\partial B^3)}< 10 \delta$. Moreover, for the modified map $\tilde \vp$ there exists exactly one minimizer $\tilde u\colon B_1^3 \to \S^2$ with $\tilde u \big\rvert_{\partial B_1^3} = \tilde \vp$.
\end{lemma}

% \begin{theorem}[{{\cite[Theorem 4.1 (2)]{AlmgrenLieb1988}}}]\label{th:AL41}
%  Let $u\colon B_1^3\rightarrow \S^2$ be a minimizing harmonic map with boundary map $\vp\colon \partial B_1^3\rightarrow\S^2$. Let $u_r\colon B_1^3\rightarrow\S^2$ and $\vp_r\colon \partial B_1^3\rightarrow\S^2$ be defined for each $0<r<1$ by
%  \begin{equation*}
%   u_r(x) \coloneqq u(rx) \quad \text{ for } x\in B_1^3,\quad\quad \vp_r(y) \coloneqq u(ry) \quad \text{ for } y\in \partial B_1^3.
%  \end{equation*}
% Then, for each $0<r<1$, the map $u_r$ is the unique minimizer with boundary data $\vp_r$ and $\vp_r$ is a restriction of a real analytic map for uncountable many $r$'s. Moreover, $u_r\rightarrow u$ strongly in $W^{1,2}(B_1^3)$ as $r \rightarrow 1$.
% \end{theorem}
% 
% \begin{proof}
% The uniqueness follows from the fact that minimizing harmonic maps into $\S^2$ are  real analytic except for a discrete set. Thus, by the unique continuation property any restriction to a proper subset generates a minimizing harmonic map, unique for its boundary condition. 
% 
% Since the singular set is discrete $\vp_r$ must be analytic for uncountably many $r$'s. 
% 
% The strong convergence to the (possibly non-unique) minimizer $u$ follows from the dominated convergence theorem.
% \end{proof}
% 
% \begin{remark}
%  In the proof of \Cref{th:AL41} it is used that the domain is uniformly star-shaped. As noted by Almgren and Lieb \cite[Remark 4.2]{AlmgrenLieb1988}, it is an open problem whether the the Theorem is still true for a domain which is not star-shaped, e.g., an annular region.
% \end{remark}

\begin{proof}[Proof of Theorem \ref{ex:sharpnessoflinearlaw}]
Fix $0<\eps<1$. We divide the sphere $\S^2$ into infinitely many disjoint regions near which the singularities will appear (and infinitely many disjoint regions near which we make small corrections in order to ensure that the boundary data will admit only one minimizer).

 We choose two sequences of points on the sphere $\{y_j\}_{j=1}^\infty$ and $\{\tilde{y}_j\}_{j=1}^\infty$,  $y_j,\, \tilde{y}_j\in \S^2$. We also choose a sequence of radii $\{\varrho_j\}_{j=1}^\infty$ with $\varrho_j<2^{-j}$, such that all of the discs from $\bigcup_{j=1}^\infty D_{\varrho_j}(y_j)$ and $\bigcup_{j=1}^\infty D_{\varrho_j}(\tilde y_j)$ are pairwise disjoint, where $D_{\varrho_k}(y_k)\coloneqq B_{\varrho_j}(y_j)\cap \S^2$ stands for the disc on the sphere $\S^2$.
 
 The regions near $D_{\varrho_j}(y_j)$ will be the regions near which singularities will appear and the discs $D_{\varrho_j}(\tilde y_j)$ will be used to correct the boundary map, using \Cref{le:AL32}, in such a way that the boundary map will admit a unique minimizer.

We begin with a constant map $\vp_0\colon\S^2 \rightarrow \S^2$ and we will modify it until we obtain the desired boundary map. Since $\vp_0$ is smooth and admits only one minimizer, i.e., the constant map, we can install a singularity using \Cref{th:ALinstallsing}, by modifying the map $\vp_0$ in the following way 
 \begin{equation*}
  \overline{\vp}_1\coloneqq 
  \left\{ \begin{array}{ll}
    \langle\vp_0\rangle_{y_1,\varrho_1} & \textrm{ for } x\in D_{\varrho_1}(y_1),\\
    \vp_0(x) & \textrm{ for } x\in\S^2 \setminus D_{\varrho_1}(q_1).
  \end{array} \right.
 \end{equation*}
for $\varrho_1<2^{-1}$ small enough so that \Cref{th:ALinstallsing} would guarantee that any minimizer $\overline{u}_1$ with $\overline{u}_1\big\rvert_{\partial B_1^3} = \overline{\vp}_1$ has an interior singularity $\overline{x}_1\in B_{\varrho_1}(y_1)\cap B_1^3$. 

For any $\eps\in(0,1)$ by H\"{o}lder's inequality and conditions \ref{it:installannuli} and \ref{it:installenergy} in the Definition \ref{def:singinstal} we have
\begin{equation}\label{eq:bigH1normofbarvp1}
\begin{split}
\int_{D_{\varrho_1}(y_1)}|\nabla \overline{\vp}_1|^{2-\eps}\dh 
&= \int_{D_{\frac{\varrho_1}{2}}(y_1)}|\nabla \overline{\vp}_1|^{2-\eps}\dh  + \int_{D_{\varrho_1}(y_1) \setminus D_{\frac{\varrho_1}{2}}(y_1)}|\nabla \overline{\vp}_1|^{2-\eps}\dh\\
&\le (8\pi + o(1))^\frac{2-\eps}{2}\pi^{\frac{\eps}{2}}\brac{\frac{\varrho_1}{2}}^{\eps} + L_{\vp_0}^{2-\eps}\pi\frac34 \varrho_1^2,  
\end{split}
\end{equation}
where $o(1)\to 0\text{ as } \rho_1 \to 0$. 

In order to install the next singularity, we need to modify the first map $\overline{\varphi}_1$ in such a way that the new boundary map will admit only one minimizer. For this we use \Cref{le:AL32} and modify $\overline{\vp}_1$ in a small disc $D_{\varrho_1}(\tilde y_1)$, away from the disc $D_{\varrho_1}(y_1)$ in order to obtain $\vp_1\in C^\infty(\S^2,\S^2)$ with the properties:
\begin{enumerate}[label={(P1.\arabic*)}]
 \item $\vp_1 \equiv \overline{\vp}_1$ outside $D_{\varrho_1(\tilde y_1)}$;\\
 \item There exists exactly one minimizer $u_1$ with $u_1\big\rvert_{\partial B_1^3} = \vp_1$;
 \item $u_1$ has at least one singularity $x_1\in B_{\varrho_1(y_1)}\cap B_1^3$; 
 \item \label{it:normdifferencevp1} $\|\overline{\vp}_1 - \vp_1\|_{W^{1,2}(\S^2)}< 10 \varrho_1$.
\end{enumerate}
For $\vp_1$ we have
\begin{equation}\label{eq:111111111}
 \begin{split}
  \int_{\S^2} |\nabla \vp_1|^{2-\eps} \dh 
  &= \int_{D_{\varrho_1}(\tilde y_1)} |\nabla \vp_1|^{2-\eps} \dh + \int_{\S^2 \setminus D_{\varrho_1}(\tilde y_1)} |\nabla \overline{\vp}_1|^{2-\eps} \dh\\
  &\le \brac{\int_{D_{\varrho_1}(\tilde y_1)} |\nabla \vp_1|^{2} \dh}^{\frac{2-\eps}{2}}\pi^{\frac{\eps}{2}}\varrho_1^{\eps} + (8\pi + o(1))^\frac{2-\eps}{2}\pi^{\frac{\eps}{2}}\brac{\frac{\varrho_1}{2}}^{\eps} + L_{\vp_0}^{2-\eps}\pi\frac34 \varrho_1^2.
 \end{split}
\end{equation}
By \ref{it:normdifferencevp1} and since $\overline{\vp}_1\equiv const$ on $D_{\varrho_1}(\tilde y_1)$, we have
 \begin{equation}\label{eq:222222222}
  \int_{D_{\varrho_1}(\tilde y_1)} |\nabla \vp_1|^{2} \dh \le 2\int_{D_{\varrho_1}(\tilde y_1)} |\nabla (\vp_1-\overline{\vp}_1)|^{2} \dh \le 20\varrho_1.
 \end{equation}
Thus, combining \eqref{eq:111111111} with \eqref{eq:222222222} we get
\begin{equation}\label{eq:33333333}
 \int_{\S^2} |\nabla \vp_1|^{2-\eps} \dh 
  \le
  \brac{20}^{\frac{2-\eps}{2}}\pi^{\frac{\eps}{2}}\varrho_1^{\frac{2+\eps}{2}} + (8\pi + o(1))^\frac{2-\eps}{2}\pi^{\frac{\eps}{2}}\brac{\frac{\varrho_1}{2}}^{\eps} + L_{\vp_0}^{2-\eps}\pi\frac34 \varrho_1^2.
\end{equation}
Now we proceed by induction and repeat this procedure in order to install another singularity near the point $y_j\in \S^2$
. Let $j\in \{1,2,\ldots\}$, assume we have already defined the boundary map $\varphi_{j}$, which satisfies:
\begin{enumerate}[label={(Pj.\arabic*)}]
 \item $\varphi_j$ admits only one minimizer $u_j\colon B_1^3 \to \S^2$;
 \item $u_j$ has at least $j$ singular points: $x_{1,j},\ldots, x_{j,j}$, such that $x_{k,j} \in B_{\varrho_k}(y_k)\cap B_1^3$ for each $k\in \{1,\ldots,j\}$; 
 \item outside of the union of the discs $\bigcup_{k=1}^j D_{\varrho_k}(y_k)\cup D_{\varrho_k}(\tilde{y}_k)$ we have $\vp_j \equiv \vp_0$;
 \item \label{it:estimateofvpj}$\vp_j$ satisfies the estimate
 \begin{equation*}
  \int_{\S^2} |\nabla \vp_j|^{2-\eps} \dh \le \sum_{k=1}^j C(\eps)\brac{\varrho_j^{\frac{2+\eps}{2}} + \varrho_j^\eps + \varrho_j^{2}} \aleq \sum_{k=1}^j \varrho_j^{\eps}.
 \end{equation*}
\end{enumerate}
Then, we define the map $\overline{\vp}_{j+1}\colon \S^2 \to \S^2 $ by  
\begin{equation*}
  \overline{\vp}_{j+1}\coloneqq 
  \left\{ \begin{array}{ll}
    \langle\vp_{j}\rangle_{y_{j},\varrho_j} & \textrm{ for } x\in D_{\varrho_{j+1}}(y_{j+1}),\\
    \vp_{j} & \textrm{ for } x\in\S^2 \setminus D_{\varrho_{j+1}}(y_{j+1}).
  \end{array} \right.
 \end{equation*}
We define also the correction $\vp_{j+1}$, to make sure that there is exactly one minimizer corresponding to the boundary map, by applying \Cref{le:AL32} to $\overline{\vp}_{j+1}$ on a small disk $D_{\varrho_{j+1}}(\tilde{y}_{j+1})$. Obtaining a boundary map for which 
\begin{equation}\label{eq:smalldifferencevpjvpj+1}
 \|\vp_{j+1} - \overline{\vp}_{j+1}\|_{W^{1,2}(\S^2)}^2 = \|\vp_{j+1} - \vp_j\|_{W^{1,2}(D_{\varrho_{j+1}}(\tilde{y}_{j+1}))}^2\le 100\varrho_{j+1}^2,
\end{equation}
for a $\varrho_{j+1}<2^{-(j+1)}$ small enough so that \Cref{th:ALinstallsing} would guarantee that any minimizer $u_{j+1}$ corresponding to $\vp_{j+1}$ has an interior singularity $x_{j+1,j+1}\in B_{\varrho_{j+1}}(y_{j+1})\cap B_1^3$ and at least $j$ other singular points $x_{1,j+1},\ldots,x_{j,j+1}$, such that each $x_{k,j+1}\in B_{\varrho_k}(y_k)\cap B_1^3
$ for $k\in\{1,\ldots,j\}$. 

We have exactly as in \eqref{eq:bigH1normofbarvp1}
\begin{equation}\label{eq:againsmallnormbelowW12}
\begin{split}
\int_{D_{\varrho_{j+1}}(y_{j+1})}|\nabla \overline{\vp}_{j+1}|^{2-\eps}\dh 
%&\le
%\int_{D_{\frac{\varrho_1}{2}}(y_1)}|\nabla \overline{\vp}_1|^{2-\eps}\dh  + \int_{D_{\varrho_1}(y_1) \setminus D_{\frac{\varrho_1}{2}}(y_1)}|\nabla \overline{\vp}_1|^{2-\eps}\dh\\
&\le (8\pi + o(1))^\frac{2-\eps}{2}\pi^{\frac{\eps}{2}}\brac{\frac{\varrho_{j+1}}{2}}^{\eps} + L_{\vp_0}^{2-\eps}\pi\frac34 \varrho_{j+1}^2,  
\end{split}
\end{equation}
where $o(1)\to 0\text{ as } \rho_1 \to 0$. The Lipschitz constant appearing in \eqref{eq:againsmallnormbelowW12} is again $L_{\vp_0}$ as the map $\overline{\vp}_{j+1}=\vp_j = \vp_0$ in $D_{\varrho_j}(y_j)$.

We also have from \eqref{eq:smalldifferencevpjvpj+1} and \ref{it:estimateofvpj}
\begin{equation}\label{eq:vpj+1ontildeyj+1}
 \begin{split}
  \int_{D_{\varrho_{j+1}}(\tilde y_{j+1})}|\nabla \vp_{j+1}|^{2-\eps} \dh 
  &\le 2 \brac{\int_{D_{\varrho_{j+1}}(\tilde y_{j+1})}|\nabla (\vp_{j+1} -\vp_j) |^{2-\eps}\dh + \int_{D_{\varrho_{j+1}}(\tilde y_{j+1})}|\nabla \vp_j|^{2-\eps} \dh}\\ 
  &\aleq \brac{\int_{D_{\varrho_{j+1}}(\tilde y_{j+1})}|\nabla (\vp_{j+1}-\vp_j)|^{2} \dh}^{\frac{2-\eps}{2}}\varrho_{j+1}^\eps + \sum_{k=1}^j \varrho_k^\eps\\
  &\le \varrho_{j+1}^{\eps} + \sum_{k=1}^j \varrho_k^\eps = \sum_{k=1}^{j+1} \varrho_{k}^\eps.
 \end{split}
\end{equation}
Thus, by \eqref{eq:againsmallnormbelowW12}, \eqref{eq:vpj+1ontildeyj+1}, and \eqref{it:estimateofvpj}
\begin{equation}
 \begin{split}
  &\int_{\S^2} |\nabla \vp_{j+1}|^{2-\eps} \dh\\ 
  &\le \int_{D_{\varrho_{j+1}}(\tilde{y}_{j+1})} |\nabla \vp_{j+1}|^{2-\eps} \dh + \int_{D_{\varrho_{j+1}}(y_{j+1})} |\nabla \overline{\vp}_{j+1}|^{2-\eps} + \int_{\S^2} |\nabla \vp_j|^{2-\eps} \dh\\
  &\aleq \sum_{k=1}^{j+1} \varrho_{k}^\eps + (8\pi + o(1))^\frac{2-\eps}{2}\pi^{\frac{\eps}{2}}\brac{\frac{\varrho_{j+1}}{2}}^{\eps} + L_{\vp_0}^{2-\eps}\pi\frac34 \varrho_{j+1}^2 + \sum_{k=1}^j \varrho_{k}^\eps \aleq \sum_{k=1}^{j+1} \varrho_k^\eps.
 \end{split}
\end{equation}
Now we pass with $j\rightarrow\infty$ in order to obtain $\varphi\in W^{1,2-\eps}(\S^2,\S^2)$ with
\[
 \int_{\S^2} |\nabla \vp|^{2-\eps} \aleq \sum_{k=1}^\infty \varrho_k^\eps \le \sum_{k=1}^\infty 2^{-k\eps}<\infty. 
\]
Moreover, $\vp$ admits a minimizer $u$ (any limit point of $u_j$) that has infinitely many singular points $x_k\in B_{\varrho_k}(y_k)\cap B_1^3$, these singularities accumulate at $q_\infty\in \S^2$. 

We note also that $\vp\notin W^{1,2}(\S^2,\S^2)$.
%%% TIME OF MY LIFE
\begin{center}
\begin{tikzpicture}
\fill[shading=ball, ball color=white] (0, 0) circle (4);

\begin{scope}[shift={(0.5,0.1)}]
\clip (-1,1) circle (2);
\fill[shading=ball, ball color=red!40, rotate=-45] (-3.2,0) circle (1);
%\fill[color=red!40, opacity=0.25] (-3.2,0) ellipse (7pt and 21pt);
%\draw[rotate=-45, dashed, opacity=0.15, shift={(0,-0.1)}] (-3.3,0.1) ellipse (12pt and 30pt);
\end{scope}

\begin{scope}[even odd rule]
\clip (-3,0.6) circle (0.5) (-3.2,0.6) ellipse (0.5cm and 0.7cm);
\fill[shading=ball, ball color=red!40] (-3,0.6) circle (0.5);
%\draw[dashed, opacity =0.8] (-3.2,0.6) ellipse (0.5cm and 0.7cm);
\end{scope}

\begin{scope}[even odd rule]
\clip (-3,0.6) circle (0.5);
\clip (-3.2,0.6) ellipse (0.5cm and 0.7cm);
\fill [color=red!40, opacity=0.4] (-4,-1) rectangle (0,4);
\end{scope}

%%%%%%%%%%%%%%%%%%%%%%%%%%%%%%%%%%%%%$$$
% "densely dashed" changed to "dashed" %
%%%%%%%%%%%%%%%%%%%%%%%%%%%%%%%%%%%%%$$$
\draw[dashed, opacity =0.4] (-0.55,0.8) circle (0.25);
\fill [color=red!40, opacity=0.4] (-0.55,0.8) circle (0.25);

\begin{scope}
\clip[overlay]  (0,0) circle (4);

\fill[shading=ball, ball color=red!40] (0, -4) circle (4); 

\fill[shading=ball, ball color=red!40] (45:4) circle (2);

%\fill[shading=ball, ball color=red!40, opacity=0.2] (-3,0.5) circle (0.5);

%\fill[shading=ball, ball color=red!40] (0,2.5) circle (0.5);
\end{scope}

%%%%%%%%%%%%%%%%%%%%%%%%%%%%%%%%%%%%%$$$
% "densely dashed" changed to "dashed" %
%%%%%%%%%%%%%%%%%%%%%%%%%%%%%%%%%%%%%$$$
\draw[dashed, opacity =0.4] (-1.2,-0.15) circle (0.125);
\fill [color=red!40, opacity=0.4] (-1.2,-0.15) circle (0.125);

%%%%%%%%%%%%%%%%%%%%%%%%%%%%%%%%%%%%%$$$
% "densely dashed" changed to "dashed" %
%%%%%%%%%%%%%%%%%%%%%%%%%%%%%%%%%%%%%$$$
\draw[dashed, opacity =0.4] (0,0.1) circle (0.061);
\fill [color=red!40, opacity=0.4] (0,0.1) circle (0.061);

%%%%%%%%%%%%%%%%%%%%%%%%%%%%%%%%%%%%%$$$
% "densely dashed" changed to "dashed" %
%%%%%%%%%%%%%%%%%%%%%%%%%%%%%%%%%%%%%$$$
\draw[dashed, opacity =0.4] (-0.25,-0.4) circle (0.03);
\fill [color=red!40, opacity=0.4] (-0.25,-0.4) circle (0.03);

\fill[shading=ball, ball color=gray!20, opacity=0.2] (0, 0) circle (4);

\draw[thick] (0:4) arc (0:-180:4 and 0.7);
\draw[thick,dashed,opacity=0.9] (180:4) arc (180:0:4 and 0.7);

\draw[thick, rotate=90] (0:4) arc (0:-100:4 and 0.7);
\begin{scope}[yscale=1,xscale=-1]
\draw[thick,dashed, rotate=90, opacity=0.9] (0:4) arc (0:-80:4 and 0.7);
\end{scope}

\draw[thick] (90:4) arc (90:180:4);
\begin{scope}
\clip (-4,1) rectangle (0.64,2.5);
\draw[thick] (30:4) arc (0:-180:3.46 and 0.6);
\end{scope}

\begin{scope}[yscale=1,xscale=-1]
\clip (-2,-1) rectangle (2,1.45);
\draw[thick, rotate=90] (0:4) arc (0:-99:4 and 1.8);
\end{scope}

\begin{scope}
\clip (-1.8,0) rectangle (0.69,2.5);
\draw[thick] (15:4) arc (0:-180:3.87 and 0.65);
\end{scope}

\begin{scope}[yscale=1,xscale=-1, shift={(-1.22,0)}]
\clip (-2,-0.7) rectangle (2,0.4);
\draw[thick, rotate=90, ] (0:4) arc (0:-110:4 and 1.8);
\end{scope}

\begin{scope}
\clip (-0.6,-1) rectangle (0.69,2.5);
\draw[thick] (7.5:4) arc (0:-180:3.96 and 0.65);
\end{scope}

\begin{scope}[yscale=1,xscale=-1, shift={(-1.82,0)}]
\clip (-2,-0.7) rectangle (2,-0.15);
\draw[thick, rotate=90, ] (0:4) arc (0:-110:4 and 1.8);
\end{scope}

\filldraw[black] (0,-4) circle (1pt) node[below] {$q_0$};
\filldraw[black] (45:4) circle (1pt) node[right] {$q_1$};
\filldraw[black,opacity=0.4] (-1.7,2.2) circle (1pt) node[right] {$q_2$};
\filldraw[black] (-3.1,0.6) circle (1pt) node[right] {$q_3$};
\filldraw[black] (-0.55,0.8) circle (0.8pt) node[right] {$q_4$};
\filldraw[black] (-1.2,-0.15) circle (0.7pt) node[right] {$q_5$};
\filldraw[black] (0,0.1) circle (0.5pt) node[right] {$q_6$};
\filldraw[black] (-0.25,-0.4) circle (0.25pt) node[below] {$q_7$};
\filldraw[black] (0.69,-0.69) circle (1pt) node[below] {$q_\infty$};

%Description
\node[scale=0.75] at (0,-5) {The singularities appear somewhere in the red bubbles};
% The old version with "center" causing some trouble (the outcome is the same)
%\node[scale=0.75, center] at (0,-5) {The singularities appear somewhere in the red bubbles};
\end{tikzpicture}

\end{center}

\end{proof}

\subsection{Remarks about the optimal boundary norm in higher dimensions}\label{ss:optimalboundarynorm}
We do not know if our results are optimal for $n>3$. However, we can construct the following.

\begin{example}\label{example:lowerstrata}
We recall that by \cite{SU84}, for maps into $\S^3$ the singular set has dimension less or equal $(n-4)$.

 There exist $\varphi_k\colon \S^3 \to \S^3$ homotopically nontrivial ($\deg(\varphi_k)=1$) such that for any $p<3$ 
 \[
  \int_{\S^3}|\nabla \varphi_k|^p \dd\mathcal{H}^{3} \rightarrow 0,
 \]
implying $\mathcal{H}^{0}(\text{sing}\, u_k)\ge 1$, where $u_k\colon B^4_1\rightarrow \S^3$ are minimizers corresponding to the boundary data $\vp_k$, i.e.,  $u_k\big\rvert_{\S^3} = \vp_k$.

This in particular, implies that the linear law with $W^{1,2}$ cannot be true for the stratum $\mathcal S _{n-4}$ and singularities are not stable under $W^{1,p}$ for $p<3$ perturbations of the boundary.
\end{example}

Motivated by this example we conjecture the following.
\begin{conjecture} Let $\Omega \subset \R^n$, $\n$ be such that $\pi_1(\n)$ is finite and let $u\in W^{1,2}(\Omega,\n)$ be a minimizing harmonic map with $u\big\rvert_{\partial \Omega} = \vp$. Then for each $k=3,\ldots,n$ we have
\[
 \H^{n-k}(\mathcal S_{n-k}) \le C \int_{\partial \Omega} |\nabla \vp|^{k-1} \dhn.
\]
\end{conjecture}
A starting point to study this problem would be to develop a theory for each strata $\mathcal S_{n-k}$ similar to the one for $\mathcal S_{n-3}=\sing u $ by \cite{NabVal17}, in particular a counterpart of \Cref{th:NabVal-general-meas-bound}. Note that even for $k=3$ our conjecture suggests an improvement from $W^{1,n-1}$ to $W^{1,2}$-control of the boundary data.

\subsection{Remarks about other target manifolds}\label{ss:othertargetmanifolds}
In the case when the target manifold is an orientable surface ($\dim \n=2$), since our results cover the case when $\pi_1(\n)$ is finite, we are left with the case when genus of the target manifold is positive ($\rm{genus}(\n)\ge 1$). In this situation, we have as a~consequence of a~maximum principle --- the result of Wood \cite{Wood1977,Wood-erratum}.
\begin{theorem}[{\cite[Theorem 3.3 (ii)]{Wood1977} \& Erratum \cite{Wood-erratum}}]
Let $\mathcal M$, $\n$ be compact orientable surfaces and assume that $\rm{genus}(\mathcal M)=0$ and $\rm{genus}(\n) >0$. Then the only harmonic maps $w\colon \mathcal M \to \n$ are constant maps. 
\end{theorem}

One can see that this result implies $\sing u = \mathcal{S}_{n-4}$ and hence $\cH^{n-3}(\sing u)=0$ in the case where $u$ is a minimizing harmonic map into a compact orientable surface $\cN$ of genus $\ge 1$. Indeed, at each point in the top-dimensional part of the singular set $\sing_* u = \mathcal{S}_{n-3} \setminus \mathcal{S}_{n-4}$ there is a nonconstant $(n-3)$-symmetric tangent map $w \colon \R^n \to \cN$. Due to its symmetries, such a map reduces to a harmonic map $w \colon \S^2 \to \cN$. The genus of $\S^2$ is $0$ and hence $w$ cannot be constant by the above theorem. 

Thus, the linear law of Almgren and Lieb \Cref{th:almgren-lieb-high} holds for every compact connected orientable surface in the target.

We note here also two special cases of manifolds with infinite fundamental group: $\n = \S^1$ and $\n = \mathbb{T}^2$. In the first case if we consider harmonic maps (not necessarily \emph{minimizing}) $u\in W^{1,2}(B,\S^1)$, where $B\subset\R^n$ is a simply connected domain, with given boundary data $u\big\rvert_{\partial B}=\phi\in W^{\frac 12,2}(\partial B, \S^1)$, then we have by a lifting argument (see \cite{BethuelZheng, BourgainBrezisMironescu}) $u=e^{i\tilde u}$ with $\tilde u \in W^{1,2}(B,\R)$ and one can easily check that $\tilde u$ is a solution to 
\[
 \left\{\begin{array}{rll}
  -\Delta \tilde u &= 0 & \text{ in } B\\
  \tilde u &= e^{i \tilde \phi} & \text{ on } \partial B,
 \end{array}\right.
\]
where $\tilde \phi$ is a lifting of $\phi$, i.e., $\phi = e^{i\tilde \phi}$. Thus, $u$ has no singularities.

Moreover, Rivi\`{e}re showed that if we consider a harmonic map $u\colon B \subset \R^n \to \mathbb{T}^2$, where $\mathbb T^2 = \S^1 \times \S^1$ is a torus of revolution (i.e., with metric of the form $\lambda(\phi) \dd \theta^2 + \dd\phi^2$), then $u$ must be smooth ($u$ does not have to be \emph{minimizing}). See \cite{RivierePhD}.

Thus, in the case when $\mathcal N = \S^1$ or $\mathcal N = \mathbb T^2$ not only \emph{minimizing} harmonic maps have no singularities but \emph{all weakly} harmonic maps are regular.

\appendix
\section{Trace theorems}
\subsection{A trace theorem}\label{a:traces}
In this section we review the trace theorems used throughout the paper. Here we present the results for domains in $\R^n$ for $n\ge 3$. The main point is to set the trace separately for two parts of the boundary and obtain estimates without the interaction term.

We will employ the following notation. For a point $y_0\in\partial \Omega$ on the boundary we will consider intersection of balls $B_r(y_0)$ with the domain $\Omega$. We distinguish two parts of the boundary of $\partial (B_r(y_0)\cap \Omega)$ and write:
\[
\partial (B_r(y_0)\cap \Omega) = \partial \Omega_r^+(y_0) \cup T_{\Omega_r(y_0)},
\]
where $\partial \Omega_r^+(y_0)\coloneqq \partial B_r(y_0) \cap \Omega$ and $ T_{\Omega_r(y_0)}\coloneqq B_r(y_0) \cap \partial \Omega$.

If the center of the ball will play no role we will omit $y_0$ in the above notation and write simply $\partial \Omega_r^+$ and $T_{\Omega_r}$.
\begin{lemma}\label{la:t1}
	Let $\Omega \subset \R^n$ be a $C^1$ domain. Then, there exists an $R=R(\Omega)$ such that for every $y_0\in\partial \Omega$ and $r<R$ the following is true: 
	
	Let $u \in W^{1,2}(B_r(y_0)\cap \Omega)$ be a solution to
	\[
	\left\{
	\begin{array}{rcll}
	\lap u & = & 0&\mbox{in $B_r(y_0)\cap \Omega$}\\
	u\, & = & 0 &\mbox{on $\partial \Omega_r^+(y_0) = \partial B_r(y_0)\cap \Omega$}\\
	u\, & = & \psi &\mbox{on $T_{\Omega_r(y_0)}=B_r(y_0)\cap \partial \Omega$}.
	\end{array}
	\right.
	\]
Then for any $s$ satisfying $s>\frac{1}{2}$ we have
\begin{equation}\label{eq:harmonicbyflatpartwith2}
 \|\nabla u\|_{L^2(B_r(y_0)\cap \Omega)} \aleq r^{\frac{-1+2s}{2}}[\psi]_{W^{s,2}(T_{\Omega_r(y_0)})}.
\end{equation}
%Moreover if $\psi\in L^\infty$, then for any $s>\frac12$ and $sp>1$ we have
%\begin{equation}\label{eq:harmonicbyflatpartgeneral}
% \|\nabla u\|_{L^2(B_r(y_0)\cap \Omega)} \aleq r^{\frac{n-1}{2}(1-\frac{1}{sp})}\|\psi\|_{L^\infty(B_r(y_0)\cap \partial \Omega)}^{1-\frac{1}{2s}}[\psi]_{W^{s,p}(B_r(y_0)\cap \partial \Omega)}^{\frac{1}{2s}}.
%\end{equation}
\end{lemma}

\begin{proof}
	We begin by noting that since the $\Omega$ is of class $C^1$ there exists an $R=R(\Omega)$ for which we have for every $y_0\in\partial \Omega$ and $r<R$
	\begin{equation}\label{eq:notflatboundarybutalmost}
	|x-y| \ge C \,\dist(x, \partial T_{\Omega_r(y_0)}), \quad \text{ for } x\in T_{\Omega_r(y_0)},\ y\in \partial \Omega_r^+(y_0),
	\end{equation}
	where $C$ is a constant independent of $\Omega$. In what follows we will omit $y_0$ in the notation.

\begin{center}
	\begin{tikzpicture}
	\draw plot [smooth, tension=1] coordinates { (-3,0.5) (-2,0.25) (-1,0) (0,0) (1,0)  (3,1)};
	\draw (0,0) circle (1cm);
	\begin{scope}
	\clip (0,0) circle (1cm);
	\clip plot [smooth, tension=1] coordinates { (-3,0.5) (-2,0.25) (-1,0) (0,0) (1,0)  (3,1) (0,2) (-3,0.5)};
	\fill[gray, opacity=0.1] (0,0) circle (1cm);
	\draw[blue!70, thick] plot [smooth, tension=1] coordinates { (-3,0.5) (-2,0.25) (-1,0) (0,0) (1,0)  (3,1)}; 
	\end{scope}
	\draw[black!40, thick] (1,0) arc (0:180:1);
%	\node[scale=0.75, below] at (1.5,-0.7) {$\partial B_r(y_0)\cap \Omega$};
	\node[scale=0.75, below, blue!70] at (0.95,-0.1) {$ x\in T_{\Omega_r}$};
%	\node[scale=0.75, below] at (1.5,1.1) {$ B_r(y_0)\cap \partial\Omega$};
	\node[scale=0.75, below, gray] at (1.5,1) {$y\in \partial \Omega^+_r$};
	\draw[fill=black] (0,0) circle (1pt);
	\draw[fill=black] (-1,0) circle (1pt);
	\draw[fill=black] (1,0) circle (1pt);
	\node[scale=0.75, below] at (0,0) {$y_0$};
	\end{tikzpicture}
\end{center}

By the trace theorem \cite{Gagliardo} (see also \cite[\textsection 20]{BN78} or \cite[Section 12.2]{mironescu:cel-00747696}) and Poincar\'{e} inequality we have
\[
 \int_{B_r\cap \Omega} |\nabla u|^2 \dx \aleq [u]_{W^{\frac{1}{2},2}(\partial (B_r\cap \Omega))}^2 \aleq [u]^2_{W^{s,2}(\partial (B_r\cap \Omega))}.
 \]
Moreover, since $u = 0$ on $\partial \Omega_r^+$ we have 
\[
 [u]_{W^{s,2}(\partial (B_r\cap \Omega))}^2 = [\psi]_{W^{s,2}(T_{\Omega_r})}^2 + 2\int_{T_{\Omega_r}} \int_{\partial \Omega_r^+} \frac{|\psi(x)|^2}{|x-y|^{n-1+2s}}\dd y\dd x.
\]
For the latter term we have using \eqref{eq:notflatboundarybutalmost}
\[
\begin{split}
 \int_{T_{\Omega_r}} \int_{\partial \Omega_r^+}& \frac{|\psi(x)|^2}{|x-y|^{n-1+2s}}\dd y\dd x\\  
 &\aleq \int_{T_{\Omega_r}} |\psi(x)|^2 \int_{\partial \Omega_r^+} \frac{1}{|x-y|^{n-1+2s}} \dd y \dd x\\
 &\aleq \int_{T_{\Omega_r}} |\psi(x)|^2 \int_{|z| \ge \dist (x,\partial T_{\Omega_r})} \frac{1}{|z|^{n-1+2s}} \dd y \dd x\\
 &\aleq \int_{T_{\Omega_r}} \frac{|\psi(x)|^2}{\dist (x,\partial T_{\Omega_r})^{2s}}\dd x.
 \end{split}
\]
Now, since $2s > 1$ we can apply Hardy's inequality \cite{Dyda-2004}. Observe that since $\chi_{T_{\Omega_r}} \psi$ is the trace of a $W^{1,2}$ function, it can be approximated by functions in $C_c^\infty$ (simply by scaling the support inside and convolution). Thus
\[
%\begin{split}
 \int_{T_{\Omega_r}} \frac{|\psi(x)|^2}{\dist(x,\partial T_{\Omega_r})^{s2}}\dd x 
 \aleq \int_{T_{\Omega_r}} \int_{T_{\Omega_r}} \frac{|\psi(x)-\psi(y)|^2}{|x-y|^{n-1+s2}}\dd x\dd y = [\psi]_{W^{s,2}(T_{\Omega_r})}^2.
% \end{split}
\]
This proves \eqref{eq:harmonicbyflatpartwith2}.

%In order to prove \eqref{eq:harmonicbyflatpartgeneral} we note that by Gagliardo--Nirenberg inequality \cite{BrezisMironescu-gagliardo} we have
%\[
% [u]_{W^{\frac12,2}(\partial (B_r\cap \Omega))} \aleq \|u\|_{L^{p_1}(\partial (B_r\cap \Omega))}^\theta [u]_{W^{s_2,p_2}(\partial (B_r\cap \Omega))}^{1-\theta},
%\]
%where $s_2>\frac12$, $s_2p_2>1$ and $p_1 = \frac{p_2(2s_2-1)}{s_2p_2-1}$, $1-\theta = \frac{1}{2s_2}$.
%Thus,
%\[
% [u]_{W^{\frac12,2}(\partial (B_r\cap \Omega))} \aleq \|\psi\|^{\theta}_{L^\infty(B_r\cap \partial\Omega)} [u]_{W^{s_2,p_2}(\partial (B_r \cap \Omega))}^{1-\theta},
%\]
%Now, in order to estimate
%\[
%[u]_{W^{s_2,p_2}(\partial (B_r\cap \Omega))}^{1-\theta} \aleq [\psi]_{W^{s_2,p_2}(T_{\Omega_r})}^{1-\theta}
%\]
%we may proceed similarly as in the proof of \eqref{eq:harmonicbyflatpartwith2}.
\end{proof}

As a consequence we can obtain a trace inequality, which depends only on the behavior of the boundary map of the curved part of the boundary $\partial B_r(y_0)\cap \Omega$ and the "flat" part $B_r(y_0)\cap \partial \Omega$ but not on the interaction term.

\begin{lemma}\label{la:t2}
Let $\Omega \subset \R^n$ be a $C^1$ domain. Then, there exists an $R=R(\Omega)$ such that for every $y_0\in\partial \Omega$ and $r<R$ the following is true: 

Let $u \in W^{1,2}(B_r(y_0)\cap \Omega)$ be a solution to
\[
 \left\{
\begin{array}{rcll}
\lap u & = & 0&\mbox{in $B_r(y_0)\cap \Omega$}\\
u\, & = & \varphi &\mbox{on $\partial \Omega_r^+(y_0)$}\\
u\, & = & \psi &\mbox{on $T_{\Omega_r(y_0)}$}.
\end{array}
\right.
\]
Then for any $s > \frac{1}{2}$ we have
\[
 \|\nabla u\|_{L^2(B_r(y_0)\cap \Omega)} \aleq r^{\frac{-1+2s}{2}}\brac{[\psi]_{W^{s,2}(T_{\Omega_r(y_0)})} + [\varphi]_{W^{s,2}(\partial \Omega_r^+(y_0))}}.
\]
\end{lemma}
\begin{proof}
Again, it what follows we will omit $y_0$.

First we note that we can extend $\varphi$ to all of $\partial (B_r\cap \Omega)$ with\footnote{For $\S^{n-1}_+$ we would proceed in the following way --- we extend by an even reflection the map $\varphi$ into the whole sphere $\S^{n-1}$, this way the seminorm on the interaction term may be estimated by $[\varphi]_{W^{s,2}(\S^{n-1}_+)}$, then we may project $\S^{n-1}_-$ into $T_1$ and we obtain the desired estimate. For $\partial (B_r\cap \Omega)$ we proceed similarly.}
\begin{equation}\label{eq:appreflection}
 [\varphi]_{W^{s,2}(\partial (B_r\cap \Omega))} \aleq [\varphi]_{W^{s,2}( \partial \Omega_r^+)}.
\end{equation}
Now we solve the equation
\[
 \left\{
\begin{array}{rcll}
\lap v & = & 0&\mbox{in $B_r\cap \Omega$}\\
v\, & = & \varphi &\mbox{on $\partial (B_r\cap \Omega)$.}
\end{array}
\right.
\]
Then, again by the classical trace inequality, Poincar\'e inequality, and \eqref{eq:appreflection} we have
\begin{equation}\label{eq:tr:21}
 \|\nabla v\|_{L^2(B_r\cap \Omega)} \aleq [\varphi]_{W^{s,2}( \partial \Omega_r^+)}.
\end{equation}
On the other hand we have
\[
\left\{
\begin{array}{rcll}
\lap (u-v) & = & 0&\mbox{in $B_r\cap \Omega$}\\
u-v & = & 0 &\mbox{on $\partial \Omega_r^+$}\\
 u-v &=& \psi-\varphi &\mbox{on $T_{\Omega_r}$.}
\end{array}
\right.
\]
By Lemma~\ref{la:t1},
\begin{equation}\label{eq:tr:22}
\begin{split}
 \|\nabla (u-v)\|_{L^2(B_r\cap \Omega)} &\aleq [\psi-\varphi]_{W^{s,2}(T_{\Omega_r})}\\
 &\aleq [\psi]_{W^{s,2}(T_{\Omega_r})} + [\varphi]_{W^{s,2}(T_{\Omega_r})}\\
 &\aleq [\psi]_{W^{s,2}(T_{\Omega_r})} + [\varphi]_{W^{s,2}(\partial \Omega_r^+)}.
\end{split}
 \end{equation}
Together, \eqref{eq:tr:21} and \eqref{eq:tr:22} imply the claim.
\end{proof}

We also need the following Gagliardo--Nirenberg type inequality
\begin{lemma}\label{le:appGN}
Let $\Gamma\subset \R^{n-1}$ be a compact domain. For every $\varphi \in W^{1,2}\cap L^\infty(\Gamma)$ the following interpolation inequality holds for a constant independent of $\varphi$:
\begin{equation}\label{eq:Sntracesplit}
 [\varphi]_{W^{\frac12,2}(\Gamma)}^2\aleq \|\varphi\|_{L^\infty(\Gamma)}\, \|\nabla \varphi\|_{L^2(\Gamma)}.
\end{equation}
%Also, for every $\varphi \in W^{1,2}\cap L^\infty(\S^{n-1}_+)$, where $\S^{n-1}_+ = \S^{n-1}\cap\R^n_+$, the following interpolation inequality holds for a constant independent of $\varphi$:
%\begin{equation}\label{eq:Snptracesplit}
% [\varphi]_{W^{\frac12,2}(\S^{n-1}_+)}^2 \aleq \|\varphi\|_{L^\infty(\S^{n-1}_+)}\, \|\nabla \varphi\|_{L^2(\S^{n-1}_+)}.
%\end{equation}
\end{lemma}

\begin{proof}
We have by the Gagliardo--Nirenberg inequality, \cite{BrezisMironescu-gagliardo}
\[
[\varphi]_{W^{\frac12,2}(\Gamma)} \aleq \|\varphi\|^\frac12_{L^2(\Gamma)} \|\nabla \varphi\|^\frac12_{L^2(\Gamma)}.
\]
Since $\Gamma$ is compact, we also have 
\[
\|\varphi\|_{L^2(\Gamma)} \aleq \|\varphi\|_{L^\infty(\Gamma)}.
\]
\end{proof}

From the above lemmata we obtain the following trace estimates.
\begin{theorem}[Trace Theorem]\label{th:trace}
Let $B_r \subset \R^n$, $n \geq 3$, be a ball of radius $r > 0$ and $\varphi^h\colon \B_r \to \R^d$ be the harmonic extension of $\varphi\colon \partial \B_r \to \R^d$, then
\begin{equation}\label{eq:tracespherer}
\int_{\B_r} |\nabla \varphi^h|^2 \aleq \int_{\partial B_r}\int_{\partial B_r} \frac{|\varphi(x)-\varphi(y)|^2}{|x-y|^{n}} \dd x\dd y
\end{equation}
and 
\begin{equation}\label{eq:tracespherercombined}
\int_{\B_r} |\nabla \varphi^h|^2 \aleq r^{\frac{n-1}{2}} \|\varphi\|_{L^\infty(\partial B_r)}\, \|\nabla \varphi\|_{L^2(\partial B_r)}.
\end{equation}

Let $\Omega\subset\R^n$ be a bounded domain with a $C^1$-boundary and let $y_0\in \partial \Omega$. If $\varphi^h\colon B_r(y_0)\cap \Omega \to \R^d$ is the harmonic extension of $\varphi\colon \partial (B_r(y_0)\cap \Omega) \to \R^d$, then
\begin{equation}\label{eq:tracehalfsphere}
\int_{B_r(y_0)\cap \Omega} |\nabla \varphi^h|^2 \aleq \int_{\partial (B_r(y_0)\cap \Omega)}\int_{\partial (B_r(y_0)\cap \Omega)} \frac{|\varphi(x)-\varphi(y)|^2}{|x-y|^{n}} \dd x \dd y.
\end{equation}
Moreover, for any $1<\theta< 2$, $s>\frac12$, $p>1$, $sp>1$, we have
\begin{equation}\label{eq:tracehalfspherecombined}
\begin{split}
\int_{B_r(y_0)\cap \Omega} |\nabla \varphi^h|^2 
&\aleq r^{\frac{(3-n)\theta}{2}+n-2}\, \|\varphi\|^{2-\theta}_{L^\infty(\partial \Omega_r^+(y_0))}\, \|\nabla \varphi\|^{\theta}_{L^2(\partial \Omega_r^+(y_0))}\\
&\quad + r^{(sp-(n-1))\frac{\theta}{sp}+n-2} \|\varphi\|_{L^\infty(T_{\Omega_r(y_0)})}^{2-\frac{\theta}{s}}[\varphi]_{W^{s,p}(T_{\Omega_r(y_0)})}^{\frac{\theta}{s}}.
\end{split}
\end{equation}
\end{theorem}

\begin{proof}
\eqref{eq:tracespherer} and \eqref{eq:tracehalfsphere} are classical trace inequalities (see \cite{Gagliardo}). The inequality \eqref{eq:tracespherercombined} is a~consequence of \Cref{le:appGN} and \eqref{eq:tracespherer}. 

For the clarity of the presentation we write the proof of \eqref{eq:tracehalfspherecombined} for the case $B_r(y_0)\cap \Omega= B_1^+$. Applying \Cref{la:t2} we get  for any $1 < \theta <2$
\begin{equation}\label{eq:GNsplit1}
\int_{B_1^+} |\nabla \varphi^h|^2 \aleq [\varphi]_{W^{\frac\theta2,2}(S^+_1)}^2 + [\varphi]_{W^{\frac\theta2,2}(T_1)}^2
\end{equation}
By Gagliardo--Nirenberg inequality we have
\[
[\varphi]_{W^{\frac\theta2,2}(S_1^+)}^2 \aleq \|\varphi\|^{2-\theta}_{L^2(S_1^+)}\|\nabla \varphi\|^{\theta}_{L^2(S_1^+)} \aleq \|\varphi\|_{L^\infty(S_1^+)}^{2-\theta}\|\nabla \varphi\|^{\theta}_{L^2(S_1^+)}.
\]
Applying Gagliardo--Nirenberg inequality for an $s_0 > \frac\theta2$ to the second term of \eqref{eq:GNsplit1} we obtain
\[
 [\varphi]_{W^{\frac\theta2,2}(T_1)}^2 \aleq \|\varphi\|_{L^\infty(T_1)}^{2-\frac{\theta}{s_0}}[\varphi]^{\frac{\theta}{s_0}}_{W^{s_0,\frac{\theta}{s_0}}(T_1)}.
\]
Applying once again Gagliardo--Nirenberg inequality for any $s> s_0$ and any $p>1$ we get
\[
[\varphi]^{\frac{\theta}{s_0}}_{W^{s_0,\frac{\theta}{s_0}}(T_1)} \aleq \|\vp\|_{L^{p_1}(T_1)}^{\frac{\theta}{s_0}\brac{1-\frac{s_0}{s}}}[\vp]_{W^{s,p}(T_1)}^{\frac{\theta}{s}} \aleq \|\vp\|_{L^{\infty}(T_1)}^{\frac{\theta}{s_0}\brac{1-\frac{s_0}{s}}}[\vp]_{W^{s,p}(T_1)}^{\frac{\theta}{s}},
\]
where $\frac{s_0}{\theta} = \frac{1-\frac{s_0}{s}}{p_1} + \frac{s_0}{sp}$.

Combining the last two inequalities gives
\[
 [\varphi]_{W^{\frac\theta2,2}(T_1)}^2 \aleq \|\vp\|^{2-\frac{\theta}{s}}_{L^\infty(T_1)}[\vp]^{\frac{\theta}{s}}_{W^{s,p}(T_1)}.
\]
Thus,
\[
\int_{B_1^+} |\nabla \varphi^h|^2 \aleq \|\varphi\|_{L^\infty(S_1^+)}^{2-\theta}\|\nabla \varphi\|^{\theta}_{L^2(S_1^+)} + \|\varphi\|_{L^\infty(T_1)}^{2-\frac{\theta}{s}}[\varphi]_{W^{s,p}(T_1)}^{\frac{\theta}{s}}. 
\]
The general statement follows from rescaling.
\end{proof}
%\ikasia{It does not exclude that the following inequality is true, we don't need it, but it would be slightly nicer
%
%Moreover, for $s > \frac{1}{2}$, $p \in (1,\infty)$ such that $sp > 1$, we have
%\[
%\int_{\B^+_r} |\nabla \varphi^h|^2 \aleq r^{\frac{n-1}{2}}\, \|\varphi\|_{L^\infty(S^+_r)}\, \|\nabla \varphi\|_{L^2(S^+_r)} + r^{sp-1} [\varphi]_{W^{s,p}(T_r)}^p.
%\]}
\subsection{A Counterexample}
The above trace theorem, part \eqref{eq:tracehalfspherecombined}, does not hold with $W^{\frac{1}{2},2}$. 
Indeed, this follows essentially from a counterexample to Hardy--Sobolev inequality on bounded domains for $W^{\frac{1}{2},2}$ by Dyda  \cite{Dyda-2004} (attributed to an idea by Bogdan). For an overview on available Hardy--Sobolev inequalities see also \cite{BC18}.
\begin{lemma}
There does \emph{not} exist a constant $C>0$ such that the following holds.

Assume $u \in W^{1,2}(B_1^+)$ is a harmonic function in $B_1^+ \subset \R^n$, $n\ge 3$ with
\[
 \left\{
 \begin{array}{rcll}
  \Delta u &=& 0 & \text{ in } B_1^+\\
  u&=&\vp & \text{ on } T_2\\
  u&=& \psi & \text{ on } S_1^+,
 \end{array}
 \right.
\]
where $\varphi \in W^{\frac{1}{2},2}(T_2)$ and $\psi \in W^{1,2}(S_1^+)$. Then 
\begin{equation}\label{eq:splittracefalse}
\|\nabla u\|_{L^2(B_1^+)}^2 \leq C\ \brac{\int_{T_2} \int_{T_2} \frac{|\varphi(x)-\varphi(y)|^2}{|x-y|^{n}}\dd x\dd y + \int_{S_1^+} |\nabla_T \psi|^2}.
\end{equation}
\end{lemma}
\begin{proof}
\hfill

\textsc{Step 1.} There exist two sequences of functions $\{\vp_i^1\}_{i=1}^\infty$ and $\{\vp_i^2\}_{i=1}^\infty$ with the following properties: for each $i=1,2,\ldots$ we have $\varphi^1_{i}, \varphi_{i}^2\colon T_2 \to \R$, $\supp \varphi_{i}^1 \subset T_{1-\frac{1}{i}}$, $\varphi^1_i = \varphi^2_i$ in $T_1$, 
\[
\lim_{i\to \infty}\int_{T_2} \int_{T_2} \frac{|\varphi^1_i(x)-\varphi^1_i(y)|^2}{|x-y|^{n}}\dd x\dd y = \infty, 
\]
and
\[
 \limsup_{i \to \infty} \int_{T_2} \int_{T_2} \frac{|\varphi^2_i(x)-\varphi^2_i(y)|^2}{|x-y|^{n}}\dd x\dd y  < \infty.
\]
Indeed, the sequence $\{\varphi_i^1\}_{i=1}^\infty$ is constructed by Dyda in \cite{Dyda-2004}. More precisely, he obtains a~sequence of smooth functions $\varphi^1_i \in C_c^\infty(T_1)$ such that
\[
\lim_{i\to\infty}\int_{T_1} \int_{T_1} \frac{|\varphi^1_i(x)-\varphi^1_i(y)|^2}{|x-y|^{n}}\dd x\dd y = 0,
\]
but
\[
\lim_{i\to\infty}\int_{T_2} \int_{T_2} \frac{|\varphi^1_i(x)-\varphi^1_i(y)|^2}{|x-y|^{n}}\dd x\dd y =\infty.
\]
On the other hand $T_1$ is an extension domain, see \cite{Z15}, so there exists an extension $\varphi_i^2$ of $\varphi_i^1 \Big |_{T_1}$ such that 
\[
\limsup_{i \to \infty} \int_{T_2} \int_{T_2} \frac{|\varphi^2_i(x)-\varphi^2_i(y)|^2}{|x-y|^{n}}\dd x\dd y <\infty.
\]

\textsc{Step 2.} Now consider the solution $u^i \in C^\infty(\overline{B_1^+})$ to
\[
\left\{
\begin{array}{rcll}
\lap u^i &=& 0 \quad &\mbox{in $B_1^+$}\\
u^i &=& 0 \quad &\mbox{on $S_1^+$}\\
u^i &=& \varphi_i^1 \quad &\mbox{on $T_1$}.\\
\end{array}
\right.
\]
By Gagliardo's trace theorem \cite{Gagliardo}(which was originally proved for Lipschitz domains), we have
\begin{equation}\label{eq:tracetheoremvpi1}
 \|\nabla u_i\|_{L^2(B_1^+)}^2 \aeq \int_{\partial B_1^+}\int_{\partial B_1^+} \frac{\abs{\chi_{T_1}(x) \varphi_i^1(x)-\chi_{T_1}(y) \varphi_i^1(y)}^2}{|x-y|^n} \dx \dy.
\end{equation}
Let now $N \in \partial B_1$ be the north pole. There is a bi-lipschitz map $\tau\colon T_2 \to \partial B_1^+ \setminus B_{\frac{1}{100}}(N)$ which is the identity on $T_1$. Then we have 
\begin{equation}\label{eq:bilipschitzchangeofvariables}
\begin{split}
 \int_{\partial B_1^+}\int_{\partial B_1^+}& \frac{\abs{\chi_{T_1}(x) \varphi_i^1(x)-\chi_{T_1}(y) \varphi_i^1(y)}^2}{|x-y|^n} \dx \dy\\
 &\geq\int_{\tau(T_2)}\int_{\tau(T_2)} \frac{\abs{\chi_{T_1}(x) \varphi_i^1(x)-\chi_{T_1}(y) \varphi_i^1(y)}^2}{|x-y|^n} \dx \dy\\
 &\ageq\int_{T_2}\int_{T_2} \frac{\abs{\chi_{T_1}(\tau(x)) \varphi_i^1\circ \tau(x)-\chi_{T_1}(\tau(y)) \varphi_i^1\circ \tau(y)}^2}{|x-y|^n} \dx \dy\\
 &= \int_{T_2}\int_{T_2} \frac{\abs{\varphi_i^1(x)-\varphi_i^1(y)}^2}{|x-y|^n} \dx \dy.
 \end{split}
\end{equation}
Here we used the change of variables formula which holds for bi-Lipschitz maps, see e.g., \cite[\textsection 3.3.3, Theorem 2]{EG15}.
Thus, combining \eqref{eq:tracetheoremvpi1} with \eqref{eq:bilipschitzchangeofvariables} we get
\[
 \lim_{i \to \infty} \|\nabla u_i\|_{L^2(B_1^+)}^2 =\infty.
\]
On the other hand, since $\varphi_i^1 = \varphi_i^2$ on $T_1$, we have
\[
\left\{ \begin{array}{rcll}
\lap u^i &=& 0 \quad &\mbox{in $B_1^+$}\\
u^i &=& 0 \quad &\mbox{on $S_1^+$}\\
u^i &=& \varphi_i^2 \quad &\mbox{on $T_1$}.\\
\end{array}
\right.
\]
Therefore, if \eqref{eq:splittracefalse} was true, we would obtain 
\[
 \limsup_{i \to \infty}\|\nabla u_i\|_{L^2(B_1^+)}^2 \aleq \limsup_{i \to \infty}\int_{T_2}\int_{T_2} \frac{\abs{\varphi_i^2(x)-\varphi_i^2(y)}^2}{|x-y|^n} \dx \dy  < \infty,
\]
a contradiction.
\end{proof}

\bibliographystyle{abbrv}%
\bibliography{bib}%

\end{document}